\theoremstyle{definition}
\newtheorem{theorem}{Theorem}[section]
\newtheorem{prop}[theorem]{Proposition}
\newtheorem{lemma}[theorem]{Lemma}
\newtheorem{cor}[theorem]{Corollary}
\newtheorem{remark}[theorem]{Remark}
\newtheorem{definition}[theorem]{Definition}
\newtheorem{example}[theorem]{Example}
\newtheorem{setting}[theorem]{Setting}
\newtheorem{ques}[theorem]{Question}
\newcounter{res}
\newtheorem{result}[res]{Main Result}   		
\DeclareMathOperator*{\rbif}{\mathit{r}_{\mathrm bif}}
\DeclareMathOperator*{\CC}{\mathbb{C}}
\DeclareMathOperator*{\RR}{\mathbb{R}}
\DeclareMathOperator*{\NN}{\mathbb{N}}
\DeclareMathOperator*{\PP}{\mathbb{P}}
\DeclareMathOperator*{\rs}{\widehat{\mathbb{C}}}
\DeclareMathOperator*{\Mandel}{\mathcal{M}}
\newcommand{\B}[2]{\bar{B}(#1, #2)}
\newcommand{\iteration}[3]{{#1}_{#2}^{\circ #3}}
\newcommand{\random}[3]{f_{#1} \circ \dots \circ f_{#2} \circ f_{#3}}
\newcommand{\sample}[2]{{f}_{#1}^{(#2)}}
\DeclareMathOperator*{\Poly}{\mathrm{Poly}}
\DeclareMathOperator{\supp}{supp}
\DeclareMathOperator{\dist}{dist}
\begin{document}


\title{On the stochastic bifurcations regarding random iterations of polynomials of the form $z^{2} + c_{n}$}

\author{Takayuki Watanabe
\footnote{Chubu University Academy of Emerging Sciences/Center for Mathematical Science and Artificial Intelligence. 
1200 Matsumotocho, Kasugai City, Aichi Prefecture 487-8501, Japan. 
E-mail: takawatanabe@isc.chubu.ac.jp
}}


\maketitle


\begin{abstract}
In this paper, we consider random iterations of polynomial maps $z^{2} + c_{n}$ where $c_{n}$ are complex-valued independent random variables 
following the uniform distribution on the closed disk with center $c$ and radius $r$. 
The aim of this paper is twofold. 
First, we study the (dis)connectedness of random Julia sets. 
Here, we reveal  
the relationships between the bifurcation radius and connectedness of random Julia sets. 
Second, we investigate the bifurcation of our random iterations and give quantitative estimates of bifurcation parameters. 
In particular, we prove that for the central parameter $c = -1$, 
almost every random Julia set is totally disconnected with much smaller radial parameters $r$ than expected. 
We also introduce several open questions worth discussing.  
\end{abstract}



Key words: random dynamical systems,  stochastic bifurcation, quadratic polynomials, the Mandelbrot set, mean stability, non-autonomous iterations

2020 Mathematics Subject Classification: 37H20, 37F10  (Primary)  37F12, 37F46, 37H12  (Secondary)


\section[Introduction]{Introduction}
\subsection{Background}
In this paper, we discuss the concrete and interesting random holomorphic dynamical systems with parameters $c \in \CC$ and $r \geq 0$. 
More precisely, we will denote $f_{c}(z) = z^{2} + c$ throughout this paper and 
we consider random iterations of the form $\random{c_{n}}{c_{2}}{c_{1}},$ 
where $c_{n} \ (n=1, \ 2,\ \dots)$ are complex-valued independent random variables 
that follow the uniform distribution on the closed disk $\B{c}{r}=\{c' \in \mathbb{C} \colon |c' - c| \leq r \}$ on the $c$-plane. 
 The reader is referred to Remark 4.10 of \cite{SW22}.  
See Setting \ref{settingUniform} for the rigorous setting. 

The aim of this paper is twofold. 
First, we study the (dis)connectedness of random Julia sets and relate it to the bifurcation, and second, 
 we investigate the bifurcation of our proposed random iterations and give quantitative estimates of bifurcation parameters.

In recent decades, there has been a rapid growth of 
 studies on random dynamical systems. 
In deterministic autonomous dynamical systems, time-evolution rules are independent of time and homogeneous; however, 
they are dependent in random dynamical systems. 
Motivated by scientific demands, great efforts have been  devoted to establishing  fundamental results. 
For this field, refer to the textbook by Arnold \cite{Arn}. 

Forn\ae ss and Sibony first studied random dynamical systems of complex analytic maps \cite{FS91}. 
They proved that small ``random perturbations'' of a single map produce very stable dynamics on average. 
It is then natural to ask what happens when larger noise is added.  
This question is the central motivation of this paper. 

Sumi expanded the results of Forn\ae ss and Sibony by studying minimal sets and established foundations for the study of random holomorphic dynamical systems. 
He defined the mean stability  and showed that the set of mean stable systems is open and dense in the space of all random dynamical systems having mild conditions \cite{Sumi11}. 
We will quickly review this theory in Subsection \ref{ssec:MS}. 
For details, see \cite{Sumi13, Sumi21}. 
See also \cite{SW19, SW22} for the author generalizing the setting from i.i.d.\ to non-i.i.d. setting.  

These previous studies motivate us to study the bifurcation of (i.i.d.) random quadratic dynamical systems induced by the uniform distribution on  $\B{c}{r}$. 
More precisely, for a fixed parameter $c \in \CC$ we can define the bifurcation radius $\rbif(c)$ as the parameter 
satisfying that the random dynamical system is mean stable if and only if $r \notin \{0, \rbif(c)\}$.  
Compare this with  Theorem \ref{th:listOfMinimalSets} and Definition \ref{def:bifRad}.

The dynamical systems of quadratic polynomials $f_{c}(z) = z^{2} + c \ (c \in \mathbb{C})$ are extremely important for studying holomorphic dynamical systems.
The celebrated Mandelbrot set  is an iconic figure on the parameter plane, whose boundary is known as the deterministic bifurcation locus of quadratic polynomials. 
The Mandelbrot set $\mathcal{M}$ is the set of all parameters whose critical orbit is bounded: 
 $\mathcal{M} = \{ c \in \mathbb{C} \colon f_{c}^{\circ n}(0)  \not\to \infty (n \to \infty) \},$ 
 where $f_{c}^{\circ n} = \random{c}{c}{c}$ is the (autonomous) $n$-th iterate of a map $f_{c}$. 
 The condition $ f_{c}^{\circ n}(0)  \not\to \infty$ is equivalent to the Julia set of $f_{c}$ being connected. 
See McMullen's book \cite{McM94} and Milnor's book \cite{Mil06} for details. 

Several authors have reported great studies on sample-wise dynamics for random iterations.  
For a fixed infinite sequence $(c_{n})_{n=1}^{\infty}$, we consider the Julia set of compositions $\random{c_{n}}{c_{2}}{c_{1}}$, 
which is  called a non-autonomous Julia set or a random Julia set. 

Br\"uck, B\"uger, and Reitz investigated the (dis)connectedness of random Julia sets \cite{bbr}. 
They found interesting examples that illustrated the difficulty of random iteration compared with deterministic iteration and 
established several conditions relating the noise amplitude to the connectedness of random Julia sets. 
See Remark \ref{rem:connectedDifficult} and \cite{bb03, b98}.

Br\"uck, B\"uger, and Reitz showed that if center $c =0$ and radius $r \leq 1/4$, 
then every random Julia set of $\random{c_{n}}{c_{2}}{c_{1}}$ is connected. 
Later in \cite{LZ22}, Lech and Zdunik proved that if $c=0$ and $r >1/4$, 
then almost every random Julia set is {\it totally} disconnected. 
For our viewpoint, this is equivalent to $\rbif(0) =1/4.$ 
One motivation of this paper is to generalize this result for the cases where $c \neq 0$. 

\subsection{Main results}
First, we consider a sufficient condition for 
the random Julia sets to be totally disconnected almost surely. 

\begin{result}[Theorem \ref{th:T0=1ImpliesTypFastEsc}]\label{mr:tot}
In this staement, the distribution $\mu$ of $c_{n}$ needs not to be the uniform distribution on a disk. 
Under some assumption, 
if the critical point $0$ tends to $\infty$ with probability one,  
then the random Julia set $J_{\omega}$ is totally disconnected almost surely. 
\end{result}

Second, we reveal  
the relationships between the bifurcation radius $\rbif$ and the connectedness of random Julia sets. 
More precisely, the following holds. 

\begin{result}[Theorem \ref{th:BififfTotDisconn}]\label{mr:A}
Suppose that the interior of $ \B{c}{r}$ contains a superattracting parameter.  
Then the following four are equivalent.
\begin{enumerate}[\hspace{12pt}(1)]
\item The inequality $r \leq \rbif (c)$  holds. 
\item The random orbit of $z=0$ does not tend to $\infty$ surely. 
\item The random Julia set is connected for every sample-path. 
\item The set of all  total orbits of $z=0$ is bounded in $\CC$.   
\end{enumerate}
Also, the following four are equivalent. 
\begin{enumerate}[\hspace{12pt}$(1')$]
\item The inequality $r > \rbif (c)$ holds. 
\item The random orbit of $z=0$ tends to $\infty$ almost surely.  
\item The random Julia set is totally disconnected almost surely. 
\item The set of all  total orbits of $z=0$ is not bounded in $\CC$. 
\end{enumerate}
Furthermore, either the former or the latter is valid. 
\end{result}

In particular, the following dichotomy holds. 

\begin{cor}\label{cor:MR}
Let $c$ be a superattracting parameter. 
If $r \leq \rbif (c)$, then every random Julia set is connected. 
If $r > \rbif (c)$, then almost every random Julia set is totally disconnected. 
\end{cor}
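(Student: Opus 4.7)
The plan is to deduce this dichotomy directly from Main Result A (Theorem \ref{th:BififfTotDisconn}), essentially as a matter of bookkeeping. My first step would be to verify that the hypothesis of Main Result A is satisfied under the assumptions of the corollary. The corollary assumes that $c$ itself is a superattracting parameter; hence for every $r>0$, the point $c$ lies in the interior of $\B{c}{r}$, so the requirement ``the interior of $\B{c}{r}$ contains a superattracting parameter'' in Theorem \ref{th:BififfTotDisconn} is automatically fulfilled. This is the only reduction that must be checked before invoking Main Result A.

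Next, I would split according to whether $r \leq \rbif(c)$ or $r > \rbif(c)$. In the first case, the subcase $r = 0$ is handled by hand since there is no randomness: the system collapses to deterministic iteration of $f_{c}$, and because $c$ is superattracting the critical orbit is bounded, so $c \in \Mandel$ and $J(f_{c})$ is connected. For $0 < r \leq \rbif(c)$, the implication $(1)\Rightarrow(3)$ of Main Result A immediately gives that every random Julia set is connected. In the second case $r > \rbif(c)$, we automatically have $r > 0$ since $\rbif(c) \geq 0$, so Main Result A applies and the implication $(1')\Rightarrow(3')$ yields that almost every random Julia set is totally disconnected.

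Since the corollary is essentially a restatement of the ``(1)$\Leftrightarrow$(3)'' and ``(1')$\Leftrightarrow$(3')'' portions of Theorem \ref{th:BififfTotDisconn}, I do not expect any genuine obstacle. The only points requiring attention are the trivial hypothesis reduction above and the degenerate case $r = 0$; the substantive work lies in Theorem \ref{th:BififfTotDisconn} itself, which is being cited rather than reproved here.
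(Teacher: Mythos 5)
Your proposal is correct and matches the paper's intent exactly: Corollary \ref{cor:MR} is stated there as an immediate consequence of Theorem \ref{th:BififfTotDisconn}, with the hypothesis satisfied because the superattracting parameter $c$ lies in the interior of $\B{c}{r}$ whenever $r>0$. Your separate treatment of the degenerate case $r=0$ (where the interior is empty and the system is deterministic) is a sensible and correct piece of bookkeeping that the paper leaves implicit.
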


Corollary \ref{cor:MR} is a generalization of the fact that the autonomous Julia set of a single map $f_{c}$ is 
either connected or totally disconnected according to $c \in \Mandel$ or not.

Next, we show some properties and present some quantitative estimates for the bifurcation radius as follows. 

\begin{result}[Lemma \ref{lem:trivialEstimate}, 
Theorems \ref{th:1-Lip}, 
\ref{th:positive}, 
\ref{th:bifMainEstimate}, 
\ref{th:lower-1}, 
Example \ref{ex:?}, and 
Example \ref{ex:airplane}]\label{mr:estimates}
All the following hold. 
\begin{enumerate} 
\item For every $c \in \CC$, we have $\rbif(c) \leq \dist(c, \partial \mathcal{M}).$ 
\item For every $c, c' \in \CC,$ we have $|\rbif(c) - \rbif(c')| \leq |c -c'|.$ 
\item If $0 \leq c \leq 1/4$, then  $\rbif(c) = 1/4 - c$. 
\item If $-1/2 \leq c < 0$, then  $\rbif(c) \leq 1/4 -c - c^{2}.$ 
\item We have  $0.0386\cdots \leq \rbif(-1) \leq 0.0399 \cdots$.  
\item For the airplane parameter $\tilde{c_{3}} \approx -1.75487766$, we have  $\rbif(\tilde{c_{3}}) \leq 0.0021.$  
\end{enumerate}
\end{result}

Here, $\dist(c, \partial \mathcal{M})$ denotes the Euclidean distance from $c$ to the boundary of the Mandelbrot set $\partial \Mandel.$ 
Statement (iii) gives the examples of $c$ satisfying equality $\rbif(c) = \dist(c, \partial \mathcal{M}).$ 
However,  statement (iv), (v), and (vi) give the examples of $c$ satisfying strict inequality $\rbif(c) < \dist(c, \partial \mathcal{M})$. 
  
It is worth noting that for $c=-1$ or $\tilde{c_{3}}$, 
statement (v) and (vi) illustrate a large gap $\rbif(c)\ll  \dist(c, \partial \mathcal{M}).$ 
Furthermore, we also have $\rbif(c + \epsilon)\ll  \dist(c + \epsilon, \partial \mathcal{M})$ for a small $\epsilon \in \CC.$  
The proofs of (v) and (vi) rely on parabolic implosion in a stochastic context. 
See Theorem \ref{th:upperBoundWithSupattrAssumption}. 

Combining (iv) with Corollary \ref{cor:MR},  we give a stronger result compared with the result by Lech and Zdunik. 
Compare Corollary \ref{cor:bifMainEstimate} with Theorem \ref{th:LZ21}. 
Moreover, we give a result for the case where $c$ is far from $0$. 

\begin{cor}[Corollaries \ref{cor:bifMainEstimate} and  \ref{cor:TotDisconn}]
Almost every random Julia set is totally disconnected 
\begin{itemize}
\item if the central parameter $-1/4 \leq c < 0$ and the noise amplitude  $r > 1/4 - c - c^{2},$ or 
\item if the central parameter $c=-1 + \epsilon $ and the noise amplitude  $r > 0.0399 \cdots + |\epsilon| $, or 
\item if the central parameter $c = \tilde{c_{3}} + \epsilon$ and the noise amplitude  $r > 0.0021 + |\epsilon|.$ 
\end{itemize}
\end{cor}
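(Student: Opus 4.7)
The plan is to deduce each of the three claims from Main Result \ref{mr:A}, specifically the implication $(1') \Rightarrow (3')$: if the interior of $\B{c}{r}$ contains a superattracting parameter and $r > \rbif(c)$, then almost every random Julia set is totally disconnected. So in each of the three cases I will do two things: (a) exhibit a superattracting parameter lying in the interior of $\B{c}{r}$, and (b) verify $r > \rbif(c)$ by combining the appropriate numerical bound from Main Result \ref{mr:estimates} with the $1$-Lipschitz estimate (ii).

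For the first bullet, where $-1/4 \leq c < 0$ and $r > 1/4 - c - c^{2}$, I would invoke statement (iv) directly to get $\rbif(c) \leq 1/4 - c - c^{2} < r$. For the superattracting witness I use $0$ itself (the critical point of $f_{0}(z)=z^{2}$ is fixed, so $0$ is superattracting), and check that $0$ is in the interior of $\B{c}{r}$: we need $r > |c|$, and indeed $1/4 - c - c^{2} > -c$ is equivalent to $c^{2} < 1/4$, which holds on our range.

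For the second bullet, where $c = -1 + \epsilon$, I would combine the $1$-Lipschitz property (ii) with statement (v) to get
\[
\rbif(c) \leq \rbif(-1) + |\epsilon| \leq 0.0399\cdots + |\epsilon| < r.
\]
The parameter $-1$ is superattracting (the critical point $0$ is of period $2$ for $f_{-1}$), and $|c-(-1)| = |\epsilon| < r$ puts it in the interior of $\B{c}{r}$. The third bullet is entirely analogous, with the airplane parameter $\tilde{c_{3}}$ playing the role of $-1$: it is superattracting as the center of a period-$3$ hyperbolic component, it lies in the interior of $\B{c}{r}$ because $|\epsilon| < r$, and statements (ii) and (vi) give $\rbif(c) \leq 0.0021 + |\epsilon| < r$. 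Having verified both hypotheses of Main Result \ref{mr:A} in each case, the conclusion follows at once.

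There is no real obstacle here beyond bookkeeping: all three estimates on $\rbif$ already exist in the paper, the Lipschitz estimate neatly transports them from the specific center to a nearby $c$, and the strict inequalities built into the hypotheses automatically force the superattracting point into the open disk $\B{c}{r}$. The one point that requires a moment of care is the comparison $r > |c|$ in the first bullet, which is what guarantees $0$ (rather than some other superattracting parameter) lies inside $\B{c}{r}$; this is precisely where the assumption $c \geq -1/4$ is used.
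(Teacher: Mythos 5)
Your proof is correct and follows essentially the same route as the paper: the upper bounds on $\rbif$ from Theorem \ref{th:bifMainEstimate} and Examples \ref{ex:?} and \ref{ex:airplane}, transported to nearby centers by the $1$-Lipschitz estimate of Theorem \ref{th:1-Lip}, combined with the implication that $r > \rbif(c)$ forces almost sure total disconnectedness. Your verification of the superattracting hypothesis in each case is sound but not strictly necessary, since Corollary \ref{cor:randomJuliaSetIsTotDisconn} (which the paper invokes for the first bullet) already yields that implication with no superattracting assumption.
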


This corollary gives new examples 
which satisfy that the random Julia set is totally disconnected almost surely even if $\B{c}{r} \subset \mathrm{int }\Mandel.$ 

Besides, Main Result \ref{mr:A} gives a quasiconformal conjugacy between Julia sets. 

\begin{result}[Theorem \ref{th:qc} and Corollary \ref{cor:qc}]\label{mr:qc}
Suppose that the interior of $ \B{c}{r}$ contains a superattracting parameter $\tilde{c}$ and   
suppose $r < \rbif (c)$.
Then for every $(c_{n})_{n=1}^{\infty}, (c'_{n})_{n=1}^{\infty} \in \B{c}{r}^{\NN}$, 
there exist $K \geq 1$ and a sequence of maps $\{\varphi_{n}\}_{n \geq 0}$
such that 
 $\varphi_{n}$ maps the Julia sets of $(c_{n})_{n=1}^{\infty}$ onto the Julia sets of $(c'_{n})_{n=1}^{\infty}$ $K$-quasiconformally and 
 $\varphi_{n+1}\circ f_{c_{n+1}}=  f_{c'_{n+1}}  \circ \varphi_{n}$ on the iterated Julia set for every $n \geq 0$.
 In particular, there exists a quasiconformal map $\varphi$ 
 which maps the autonomous Julia set $J_{\tilde{c}}$ onto the non-autonomous Julia set. 
  \end{result}

\subsection{Structure of the paper}
In Section \ref{sec:pre}, we discuss the elementary properties in the general form and  define the bifurcation radius. 
Moreover, we quickly review Sumi's theory on minimal sets and mean stability in Subsection \ref{ssec:MS}. 
In Section \ref{sec:TotDisconn}, we establish some tools to decide connectedness and totally disconnectedness. 
These are developed by  Br\" uck, B\"uger, and Reitz \cite{bbr} and Lech and Zdunik \cite{LZ22}. 
Besides, we show Main Result \ref{mr:tot} relating bifurcation and connectedness. 
In Section \ref{sec:MainRes}, we prove Main Results \ref{mr:A}, \ref{mr:estimates}, and  \ref{mr:qc}. 
In Section \ref{sec:Conc},  we discuss some open problems and generalization to other cases.


\section[Preliminaries]{Preliminaries}\label{sec:pre}
In this section, we define the bifurcation radius and discuss the elementary properties required in subsequent sections. 
We consider minimal sets and mean stability, which appears in Subsection \ref{ssec:MS}. 
They describe set-valued dynamics and we can apply them to quenched, or averaged,  dynamics. 
The minimal sets provide the basis for our discussion and define the bifurcation radius, which is the main subject in this paper.

\subsection{Setting and notations}\label{ssec:setting}
In this section, we consider a slightly general setting. 

\begin{setting}\label{setting} 
Let $\mu$ be a Borel probability measure on the parameter plane $\CC$ with compact support. 
Denote the support by $\supp \mu.$    
We define the probability measure $\mathbb{P}_{\mu}$ as the one-sided infinite product of  $\mu$, 
which is  supported on $\Omega_{\mu} = \prod_{n = 1}^{\infty} \supp \mu$ with the Borel $\sigma$-algebra $\mathfrak{B}_{\mu}$. 
\end{setting}

The most simple and important example for $\mu$ is the normalized Lebesgue measure on $\B{c}{r}  =\{c' \in \mathbb{C} \colon |c' - c| \leq r \}$, which means the uniform distribution. 
Another example is the Dirac measure at $c$, which can be regarded as a deterministic dynamical system. 
We analyze these examples in Section \ref{sec:MainRes}. 

Our random dynamical systems are the stochastic process perturbed by i.i.d. noise in a sense. 
One can generalize the setting to a non-i.i.d. setting, which we do not pursue here. 
For details, the reader is referred to \cite{SW19, SW22}. 

Regarding noise, a natural consideration may be  a distribution $\mu$ with unbounded support, 
say the Gaussian distribution. 
However, in unbounded cases, 
there are no planar attractors which we intend to investigate. 
Thus, we assume that $\supp \mu$ is compact. 
See \cite{b98} for the detail, and see also \cite{CSS20} for more wild phenomena for the distributions with unbounded supports.

The following is a well-known fact. 
See, for example,  Section 6 of \cite{Dur}. 

\begin{lemma}
Denote by $\sigma$ the natural left shift, that is,  $\sigma \omega = (c_{2}, c_{3}, \dots ) $ if $\omega = (c_{1}, c_{2}, \dots )$. 
Then $\sigma \colon \Omega_{\mu} \to \Omega_{\mu}$ is a measure-preserving ergodic transformation with respect to $\mathbb{P}_{\mu}$. 
\end{lemma}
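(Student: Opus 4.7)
The plan is to reduce both assertions to the $\pi$-system of cylinder sets, where the product structure of $\mathbb{P}_{\mu}$ trivialises everything, and then to propagate the conclusions to the full $\sigma$-algebra $\mathfrak{B}_{\mu}$ by standard measure-theoretic extension.

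First, for the measure-preserving property, I would fix a basic cylinder $A = A_{1} \times \cdots \times A_{k} \times (\supp\mu)^{\NN}$ with each $A_{j}$ Borel in $\supp \mu$. Unwinding the definition of the shift gives $\sigma^{-1}(A) = \supp\mu \times A_{1} \times \cdots \times A_{k} \times (\supp\mu)^{\NN}$, and the product formula for $\mathbb{P}_{\mu}$ produces
$$\mathbb{P}_{\mu}(\sigma^{-1}A) = \mu(\supp\mu)\prod_{j=1}^{k}\mu(A_{j}) = \prod_{j=1}^{k}\mu(A_{j}) = \mathbb{P}_{\mu}(A).$$
Since such cylinders form a $\pi$-system generating $\mathfrak{B}_{\mu}$, Dynkin's $\pi$-$\lambda$ theorem (equivalently, uniqueness of the product measure extension) upgrades the identity $\mathbb{P}_{\mu}\circ\sigma^{-1}=\mathbb{P}_{\mu}$ to all of $\mathfrak{B}_{\mu}$.

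Second, rather than prove ergodicity directly, I would establish the stronger mixing property and deduce ergodicity from it. Fix cylinders $A$, $B$ depending only on the first $k$ and first $\ell$ coordinates respectively. For every $n\geq k$ the set $\sigma^{-n}B$ depends only on coordinates $n+1,\ldots,n+\ell$, which are disjoint from those controlling $A$; since $\mathbb{P}_{\mu}$ is a product measure, the corresponding coordinate projections are independent, giving
$$\mathbb{P}_{\mu}(A\cap\sigma^{-n}B) = \mathbb{P}_{\mu}(A)\,\mathbb{P}_{\mu}(\sigma^{-n}B) = \mathbb{P}_{\mu}(A)\,\mathbb{P}_{\mu}(B).$$
A routine approximation argument — cylinders are dense in $L^{1}(\mathbb{P}_{\mu})$, so arbitrary Borel sets can be approximated in measure by finite unions of cylinders — extends the mixing identity $\mathbb{P}_{\mu}(A\cap\sigma^{-n}B)\to\mathbb{P}_{\mu}(A)\mathbb{P}_{\mu}(B)$ from cylinders to all $A,B\in\mathfrak{B}_{\mu}$, after which ergodicity follows by testing against $\sigma$-invariant sets $E$: taking $A=B=E$ yields $\mathbb{P}_{\mu}(E)=\mathbb{P}_{\mu}(E)^{2}$, hence $\mathbb{P}_{\mu}(E)\in\{0,1\}$.

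There is no real obstacle here; the content is purely bookkeeping. The only point requiring minor care is the approximation step for mixing, but this is the same routine used in treating any Bernoulli shift and does not interact with the specific complex-dynamical content of the paper. As an alternative, one could invoke Kolmogorov's zero-one law, observing that any shift-invariant event lies in the tail $\sigma$-algebra of the independent coordinates $c_{1},c_{2},\ldots$ and hence has probability $0$ or $1$; I would mention this as a shortcut if brevity is preferred over the mixing-based argument.
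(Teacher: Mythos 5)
Your proof is correct and entirely standard: the $\pi$-system argument for invariance and the mixing (or Kolmogorov zero--one law) argument for ergodicity are exactly the textbook treatment of the one-sided Bernoulli shift. The paper does not prove this lemma at all --- it simply cites Section 6 of Durrett --- and your argument is precisely what that reference supplies, so there is nothing to reconcile.
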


We now define the Julia set for every noise realization $\omega$. 

\begin{definition}\label{def:non-autoIter}
For each  $n \in \NN$ and  infinite sequence of parameters  $\omega = (c_{n})_{n =1}^{\infty} \in \CC^{\NN}$, 
we denote by $\sample{\omega}{n} := \random{c_{n}}{c_{2}}{c_{1}}.$ 
\end{definition}

\begin{definition}
For each $\omega = (c_{n})_{n =1}^{\infty} \in \CC^{\NN}$, we define the non-autonomous Julia set or the random Julia set of $\omega$  by 
$$J_{\omega} = \{ z \in \CC \colon \{\sample{\omega}{n}\}_{n=1}^{\infty} \text{ is not normal on any neighborhood of } z\}.$$
\end{definition}

If there exists $c_{0} \in \CC$ such that $c_{n} = c_{0}$ for every $n \in \NN$, 
then for $\omega = (c_{n})_{n =1}^{\infty}$, 
the non-autonomous Julia set $J_{\omega}$ is equal to the usual (autonomous) Julia set of $f_{c_{0}}$. 

Analogously to the usual case, the non-autonomous Julia set is the common boundary of the basin at infinity and the filled Julia set defined as follows. 

\begin{definition}
For each $\omega = (c_{n})_{n =1}^{\infty} \in \CC^{\NN}$, we define the non-autonomous basin at infinity of $\omega$ by 
$A_{\omega} = \{ z \in \rs \colon \sample{\omega}{n} \to \infty   \text{ as } n\to \infty \}.$
We define the non-autonomous filled Julia set of $\omega$  by $K_{\omega} = \rs \setminus A_{\omega}.$ 
\end{definition}

Here, $\rs = \CC \cup \{ \infty \}$ denotes the Riemann sphere, which is homeomorphic to the real two-dimensional sphere. 
We endow $\rs$ with the spherical metric $d.$ 
A polynomial map $f \colon \CC \to \CC$ can be extended analytically to $f \colon \rs \to \rs$ by letting $f(\infty) = \infty.$ 

For our convenience, 
we list the elementary properties of non-autonomous Julia sets, basins at infinity, and filled Julia sets. 
For $R > 0$, we denote by $D_{R}$ the open disk $D_{R} = \{ z \in \CC \colon |z| < R \}$ on the dynamical plane. 

\begin{lemma}\label{lem:JAK}
Let $\mu$ be a Borel probability measure on $\CC$ with compact support. 
Then we have all the following. 
\begin{enumerate}[(1)]
\item There exists $R > 0$ such that 
$\sample{\omega}{n}(\rs \setminus D_{R}) \subset \rs \setminus D_{2 R}$ for every $n \in \NN$ and $\omega \in \Omega_{\mu}$. 
\item  Take $R > 0$ as above. 
The non-autonomous basin at infinity is then the union of the increasing sequence of the open subsets; $A_{\omega} = \bigcup_{n=1}^{\infty} (\sample{\omega}{n})^{-1}(\rs \setminus \overline{D_{R}})$ for every $\omega \in \Omega_{\mu}$. 
Hence, $A_{\omega}$ is an open neighborhood of $\infty$. 
Conversely, $K_{\omega} = \bigcap_{n=1}^{\infty} (\sample{\omega}{n})^{-1}(\overline{D_{R}})$ is a non-empty compact subset for every $\omega \in \Omega_{\mu}$. 
\item For $\omega \in \Omega_{\mu}$, we have $\partial A_{\omega} = J_{\omega} = \partial K_{\omega}$. 
\item For $\omega \in \Omega_{\mu}$, we have $A_{\omega} = f_{c_{1}}^{-1}(A_{\sigma \omega})$, $K_{\omega} = f_{c_{1}}^{-1}(K_{\sigma \omega})$, and $J_{\omega} = f_{c_{1}}^{-1}(J_{\sigma \omega})$. 
\end{enumerate}
\end{lemma}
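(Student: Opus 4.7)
The plan is to handle (1) and (2) together via an explicit escape radius, derive (3) from the standard Montel-based dichotomy, and obtain (4) by unwinding the definitions.

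For (1) and (2): by compactness of $\supp \mu$, set $M := \max_{c \in \supp \mu}|c|$ and pick $R \geq 1+\sqrt{1+M}$ so that $R^{2} - M \geq 2R$. For $|z| \geq R$ and $c \in \supp \mu$, the triangle inequality gives $|f_{c}(z)| \geq |z|^{2} - |c| \geq 2|z| \geq 2R$, hence $f_{c}(\rs \setminus D_{R}) \subset \rs \setminus D_{2R} \subset \rs \setminus D_{R}$. Induction on $n$ yields (1) together with the geometric estimate $|\sample{\omega}{n}(z)| \geq 2^{n}|z|$ whenever $|z| \geq R$. For (2), set $U_{n} := (\sample{\omega}{n})^{-1}(\rs \setminus \overline{D_{R}})$; each $U_{n}$ is open, and (1) gives $U_{n} \subset U_{n+1}$ since $|\sample{\omega}{n}(z)|>R$ forces $|\sample{\omega}{n+1}(z)| > 2R$. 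A point $z$ belongs to $A_{\omega}$ if and only if some $\sample{\omega}{n}(z)$ leaves $\overline{D_{R}}$ (the non-trivial direction being the geometric escape estimate), which identifies $A_{\omega}$ with $\bigcup_{n} U_{n}$ and exhibits it as open. Its complement $K_{\omega} = \bigcap_{n} (\sample{\omega}{n})^{-1}(\overline{D_{R}})$ is a decreasing intersection of non-empty compact subsets of $\rs$ (non-empty since each $\sample{\omega}{n}$ is a polynomial of positive degree), hence non-empty by Cantor's theorem.

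For (3): the equality $\partial A_{\omega} = \partial K_{\omega}$ is immediate from $K_{\omega} = \rs \setminus A_{\omega}$, and it remains to identify this common boundary with $J_{\omega}$. If $z \in A_{\omega}$, then continuity of $\sample{\omega}{n_{0}}$ at a point where $|\sample{\omega}{n_{0}}(z)|>R$, combined with (1), forces $\sample{\omega}{n} \to \infty$ locally uniformly near $z$, so the family is normal. If $z \in \mathrm{int}\, K_{\omega}$, then a neighborhood lies in $K_{\omega}$ and the iterates are uniformly bounded by $R$ there, so Montel's theorem gives normality. At $z \in \partial A_{\omega} = \partial K_{\omega}$, every neighborhood meets both $A_{\omega}$ (iterates tending to $\infty$) and $K_{\omega}$ (iterates bounded by $R$), which rules out any continuous subsequential limit and therefore normality.

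For (4): the relation $\sample{\omega}{n} = \sample{\sigma\omega}{n-1}\circ f_{c_{1}}$ gives $A_{\omega} = f_{c_{1}}^{-1}(A_{\sigma\omega})$ and $K_{\omega} = f_{c_{1}}^{-1}(K_{\sigma\omega})$ directly from the definitions. For the Julia sets I would combine (3) with the identity $\partial f_{c_{1}}^{-1}(V) = f_{c_{1}}^{-1}(\partial V)$, valid for every open $V \subset \rs$, applied to $V = A_{\sigma\omega}$. The step I expect to be the main obstacle is precisely this identity, specifically the inclusion $f_{c_{1}}^{-1}(\partial V) \subset \partial f_{c_{1}}^{-1}(V)$: given $w \in \partial V$ and any $z \in f_{c_{1}}^{-1}(w)$, one must approximate $z$ by points whose $f_{c_{1}}$-image lies in $V$, and this requires the openness of the non-constant holomorphic map $f_{c_{1}} \colon \rs \to \rs$. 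Once this identity is in hand, the rest is routine bookkeeping.
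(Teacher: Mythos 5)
Your proof is correct and is the standard escape-radius/Montel argument; the paper itself does not prove this lemma but defers it to Br\"uck--B\"uger--Reitz \cite{bbr}, only remarking that the existence of $R$ comes from compactness of $\supp\mu$, which is exactly how your part (1) begins. All the delicate points are handled properly: the geometric growth estimate $|\sample{\omega}{n}(z)|\geq 2^{n}|z|$ for $|z|\geq R$, the contradiction with a continuous spherical limit at boundary points, and the openness of the non-constant holomorphic map $f_{c_{1}}$ needed for $f_{c_{1}}^{-1}(\partial V)\subset\partial f_{c_{1}}^{-1}(V)$ in part (4).
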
 
 For the proof, see \cite{bbr}. 
 Note that the existence of such $R$ is due to the compactness of $\supp \mu.$  
 
\subsection{Minimal sets,  mean stability, and bifurcation}\label{ssec:MS}
Consider the action of polynomial semigroups whose product is the composition of maps. 
Semigroup actions are related to random dynamical systems as set-valued dynamics. 
Here, we review the theory of polynomial semigroups, 
which enables us to define bifurcations. 

Generally, one can consider rational semigroups, not polynomial. 
However, we focus on polynomial maps 
since we are interested in quadratic polynomials. 
The interested reader is referred to  \cite{HM} and \cite{stank12}. 

We start with the definition of polynomial semigroups. 
Denote by $\Poly$ the space  of all polynomials of degree two or more endowed with the topology of uniform convergence on $\rs$. 
We say that $G$ is a polynomial semigroup if $G$ is a non-empty subset of $\Poly$ closed under mapping composition. 
Let $\Gamma$ be a subset of $\Poly$. 
We say that $G$ is generated by $\Gamma$ if  
for every $g \in G$, there exist $n \in \NN$ and $\gamma_{1}, \gamma_{2}, \dots \gamma_{n} \in \Gamma$ such that 
$g=\gamma_{n}\circ \dots \circ \gamma_{2} \circ \gamma_{1}.$  

\begin{definition}
Let $\mu$ be a Borel probability measure on  $\CC$ with compact support. 
We define the polynomial semigroup $G_{\mu}$ as the semigroup
generated by the compact set 
$\Gamma = \{f_{c} \in \Poly \colon c \in \supp \mu \}$. 
Here $f_{c}(z) = z^{2} + c$ is a quadratic polynomial, as we will assume throughout this paper. 
\end{definition}

Note that the map $\Poly \times \rs \ni (g, z) \mapsto g(z) \in \rs$ is continuous.  
This implies that the map $\Omega \times \rs \ni (\omega, z) \mapsto \sample{\omega}{n}(z) \in \rs$ is also continuous for every $n \in \NN.$

For every polynomial semigroup, we define the Julia set and Fatou set as follows. 
The following proposition connects the Julia set of polynomial semigroup and the random Julia sets. 
The idea of the proof can be found in \cite{Sumi00}. 

\begin{definition}\label{def:JuliaFatouG}
 For a polynomial semigroup $G$, define the Julia set by 
\begin{align*}
J(G) = \{ z \in \rs \colon G \text{ is not normal on any neighborhood of  } z \}.
\end{align*}
We call the complement $F(G)=\rs \setminus J(G)$ the Fatou set of $G$. 
\end{definition}

\begin{prop}\label{prop:J}
For a Borel probability measure $\mu$  on  $\CC$ with compact support,  
we have  
  $J(G_{\mu}) = \overline{\bigcup_{\omega \in \Omega_{\mu}} J_{\omega}}$.  
\end{prop}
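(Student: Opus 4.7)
The plan is to verify the two inclusions separately, leaning on Lemma~\ref{lem:JAK} and on the compactness of $\supp\mu$ throughout.

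The easy inclusion $\overline{\bigcup_\omega J_\omega}\subset J(G_\mu)$ is immediate: for every $\omega\in\Omega_\mu$, the sequence $\{\sample{\omega}{n}\}_{n\geq 1}$ is contained in $G_\mu$, so normality of $G_\mu$ on a neighborhood forces normality of this subfamily. Hence $J_\omega\subset J(G_\mu)$ for every $\omega$, and taking closures gives the inclusion since $J(G_\mu)$ is closed by definition.

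For the reverse inclusion I argue contrapositively. Fix $z_0\notin\overline{\bigcup_\omega J_\omega}$ and a small closed disk $\overline{U}\ni z_0$ disjoint from every $J_\omega$. Since $\overline{U}$ is connected and $J_\omega=\partial A_\omega=\partial K_\omega$, for each $\omega$ one has either $\overline{U}\subset A_\omega$ or $\overline{U}\subset K_\omega^{\circ}$, partitioning $\Omega_\mu=\Omega_1\sqcup\Omega_2$. Applying Lemma~\ref{lem:JAK}(1) to the increasing open cover $\bigcup_n\{z\in\overline{U}:\sample{\omega}{n}(z)\in\rs\setminus\overline{D_R}\}$ in the first alternative produces a finite escape time $N_\omega$ with $\sample{\omega}{N_\omega}(\overline{U})\subset\rs\setminus\overline{D_R}$, after which the doubling estimate takes over; in the second alternative $\sample{\omega}{n}(\overline{U})\subset\overline{D_R}$ for every $n$. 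Continuity of $(\omega,z)\mapsto\sample{\omega}{n}(z)$ in the product topology makes the escape condition open in $\omega$, so $\Omega_1$ is open in $\Omega_\mu$.

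To verify normality of $G_\mu$ on $\overline{U}$, take any sequence $g_m\in G_\mu$ and write $g_m=\sample{\omega_m}{k_m}$; the bounded-length case is handled by continuity of composition together with compactness of $(\supp\mu)^{k}$, so assume $k_m\to\infty$ and extract $\omega_m\to\omega^*$ by compactness of $\Omega_\mu$. If $\omega^*\in\Omega_1$, openness of the escape condition propagates to $\omega_m$ for large $m$, and Lemma~\ref{lem:JAK}(1) drives $g_m\to\infty$ uniformly on $\overline{U}$. If $\omega^*\in\Omega_2$ and $\omega_m\in\Omega_2$ eventually, then $g_m(\overline{U})\subset\overline{D_R}$ for all $m$, and Montel's theorem (with omitted open set $\rs\setminus\overline{D_R}$) yields a normally convergent subsequence.

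The main obstacle is the remaining mixed case $\omega_m\in\Omega_1$ with $\omega^*\in\Omega_2$, where coordinatewise convergence $\omega_m\to\omega^*$ is a priori too weak to control the long composition $\sample{\omega_m}{k_m}$, since small parameter errors can be amplified over the iteration; one can at least show the escape times $N_{\omega_m}\to\infty$ by comparing $\sample{\omega_m}{N}(\overline{U})\subset\rs\setminus\overline{D_R}$ against the Hausdorff limit $\sample{\omega^*}{N}(\overline{U})\subset\overline{D_R}$, which would otherwise force the limit image into the one-dimensional circle $\{|z|=R\}$, impossible for a non-constant polynomial image of a disk. My plan for this case is to shrink $\overline{U}$ further so that it sits in the interior of the semigroup filled Julia set $\bigcap_\omega K_\omega$ whenever $\omega^*$ has an $\Omega_2$-neighborhood, and otherwise to separate the sequence into $\Omega_1$- and $\Omega_2$-parts and reduce to the cases already handled.
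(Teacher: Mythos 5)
Your forward inclusion is fine, and your partition $\Omega_\mu=\Omega_1\sqcup\Omega_2$ together with the openness of $\Omega_1$ is correct, but the argument has a genuine gap exactly where you flag it: the mixed case $\omega_m\in\Omega_1$, $\omega_m\to\omega^*\in\Omega_2$. Neither of your proposed fixes works. Shrinking $\overline{U}$ changes nothing: whether $\overline{U}\subset A_\omega$ or $\overline{U}\subset \mathrm{int}\,K_\omega$ is unaffected by passing to a smaller disk, so the partition is unchanged; moreover in the mixed case $\overline{U}\cap K_{\omega_m}=\emptyset$, so $\overline{U}$ certainly does not sit inside $\bigcap_\omega K_\omega$. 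And ``separating the sequence into $\Omega_1$- and $\Omega_2$-parts'' does not reduce to cases already handled: the $\Omega_1$-case you handled used that the \emph{limit} $\omega^*$ lies in the open set $\Omega_1$, which is precisely what fails here. Your own observation that $N_{\omega_m}\to\infty$ is the problem, not the solution: at the prescribed times $k_m$ the connected sets $g_m(\overline{U})=\sample{\omega_m}{k_m}(\overline{U})$ lie in the basins $A_{\sigma^{k_m}\omega_m}$ with no uniform escape rate, so they need be neither uniformly bounded (for Montel) nor uniformly close to $\infty$, and coordinatewise convergence $\omega_m\to\omega^*$ gives no control of $\sample{\omega_m}{k_m}$ once $k_m$ exceeds the number of coordinates that have converged. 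This mixed case is not a removable technicality; it is essentially the whole content of the inclusion $J(G_\mu)\subset\overline{\bigcup_\omega J_\omega}$.

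Note that the paper does not prove the proposition directly but refers to \cite{Sumi00}, where the reverse inclusion is obtained by a route that bypasses the mixed case entirely. By the density theorem of Hinkkanen--Martin (see \cite{HM} and \cite{Sumi00}), any rational semigroup $G$ whose Julia set contains at least three points satisfies $J(G)=\overline{\bigcup_{g\in G}J(g)}$, where $J(g)$ is the autonomous Julia set of the single element $g$; this applies here since $J(G_\mu)\supset J(f_c)$ for any $c\in\supp\mu$. For $g=\random{c_N}{c_2}{c_1}\in G_\mu$, let $\omega_g$ be the periodic sequence repeating the word $(c_1,\dots,c_N)$; then $\sample{\omega_g}{nN}=g^{\circ n}$, so $J(g)\subset J_{\omega_g}$, and therefore $\overline{\bigcup_{g\in G_\mu}J(g)}\subset\overline{\bigcup_{\omega}J_\omega}$. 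To complete your write-up you must either import that density theorem or supply a genuinely new uniform estimate in the mixed case; as written, the proof is incomplete.
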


Next, we define minimal sets which play a crucial role in studying random holomorphic dynamical systems.

\begin{definition}\label{def:minimal}
Let $G$ be a polynomial semigroup. 
A subset $L' \subset \rs$ is said to be forward invariant under $G$, or forward $G$-invariant,  if 
$g(L')  \subset L'$ for every $g \in G$. 
A subset  $L\subset \rs$ is called a {\it minimal} set of $G$ if 
$L$ is a minimal element of the set of all forward $G$-invariant, non-empty, and compact subsets of $\rs$ with respect to the inclusion relation $\subset$. 

We can easily show that a non-empty compact set $L\subset \rs$ is minimal if and only if for every $z \in L$, we have $\overline{\bigcup_{g \in G}\{g(z)\}} = L$. 
\end{definition}

Trivially, the singleton $\{\infty\}$ is a minimal set of $G$ for every polynomial semigroup $G$. 
Thus, we are interested mainly in planar minimal sets as defined below. 

\begin{definition}\label{def:planer}
A subset $L \subset \rs$ is said to be {\it planar} if $\infty \notin L$. 
\end{definition}

For example, if $\mu = \delta_{c}$ is the Dirac measure at $c$, 
then every periodic cycle of $f_{c}$ is a minimal set of $G_{\delta_{c}}$. 
Hence, there is an infinite number of planar minimal sets of $G_{\delta_{c}}$ for every $c \in \CC$. 

For the number of minimal sets, we have the following property. 

\begin{lemma}\label{lem:MinimalInclusion}
Let $G_{1}$ and $G_{2}$ be polynomial semigroups generated by $\Gamma_{1}$ and $\Gamma_{2}$,  respectively. 
If $\Gamma_{1} \subset \Gamma_{2}$, 
then every minimal set $L_{2}$ of $G_{2}$ contains some minimal set $L_{1}$ of $G_{1}$. 
\end{lemma}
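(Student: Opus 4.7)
The plan is to deduce the statement from the standard existence principle for minimal invariant sets (a Zorn's lemma argument) applied inside $L_2$.

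First I would note the obvious monotonicity at the level of semigroups: because the semigroup generated by a set consists of all finite compositions of its elements, the inclusion $\Gamma_1\subset\Gamma_2$ immediately implies $G_1\subset G_2$. Consequently, any subset of $\rs$ that is forward invariant under $G_2$ is automatically forward invariant under $G_1$. Applying this to the given minimal set $L_2$ of $G_2$, we find that $L_2$ is a non-empty compact forward $G_1$-invariant subset of $\rs$.

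The heart of the argument is then to show that \emph{any} non-empty compact forward $G_1$-invariant set contains a minimal one. I would introduce the family
\[
\mathcal{F}=\{L\subset L_2\colon L\ \text{is non-empty, compact, and forward $G_1$-invariant}\},
\]
ordered by inclusion, and apply Zorn's lemma to produce a $\subset$-minimal element. To verify the Zorn hypothesis, I would take a totally ordered subfamily (chain) $\{L_\alpha\}_{\alpha\in A}\subset\mathcal{F}$ and consider $L_*=\bigcap_{\alpha\in A}L_\alpha$. Compactness of $\rs$ together with the finite intersection property (which holds for a chain of non-empty sets, because any finite subcollection has a smallest member) gives $L_*\neq\emptyset$; $L_*$ is closed (hence compact) as an intersection of closed sets; and forward $G_1$-invariance passes to intersections. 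Thus $L_*\in\mathcal{F}$ provides a lower bound for the chain, and Zorn's lemma yields a minimal element $L_1\in\mathcal{F}$.

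Finally, I would verify that this $L_1$ is a minimal set of $G_1$ in the sense of Definition \ref{def:minimal}. Any non-empty compact forward $G_1$-invariant proper subset of $L_1$ would also lie in $\mathcal{F}$ (since $L_1\subset L_2$), contradicting the $\mathcal{F}$-minimality of $L_1$. Hence $L_1$ is a minimal set of $G_1$ contained in $L_2$, completing the argument. I don't anticipate any real obstacle here; the only point that deserves care is the verification of the finite intersection property for the chain, but this is automatic since a chain of non-empty sets is linearly ordered.
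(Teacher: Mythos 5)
Your argument is correct and follows essentially the same route as the paper: observe that $\Gamma_1\subset\Gamma_2$ makes $L_2$ forward $G_1$-invariant, then apply Zorn's lemma to extract a minimal forward $G_1$-invariant compact subset of $L_2$. The paper states the Zorn step in one line; you merely supply the standard chain/finite-intersection-property verification, which is fine.
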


\begin{proof}
By definition, a minimal set $L_{2}$ of $G_{2}$ is forward invariant under $G_{2}$. 
Since $\Gamma_{1} \subset \Gamma_{2}$, the set $L_{2}$ is  forward invariant also under $G_{1}$. 
It follows from Zorn's lemma that there exists a minimal set $L_{1}$ of $G_{1}$ such that $L_{1} \subset L_{2}$. 
\end{proof}


\begin{definition}\label{def:attractingMinSet}
Let $G$ be the polynomial semigroup generated by a subset $\Gamma$ of $\Poly$. 
A minimal set $L \subset \rs$ of $G$ is said to be {\it attracting} if 
there exist $N \in \NN$ and non-empty open subsets $U$ and $V$ of the Fatou set $F(G)$ 
such that the following two conditions hold. 

\begin{enumerate}
\item $L \Subset V \Subset U \Subset F(G)$. 
\item For every $f_{1}, f_{2}, \dots, f_{N} \in \Gamma$, we have $\random{N}{2}{1}(U) \Subset V.$ 
\end{enumerate}
Here,  $A \Subset B$ means that the closure $\overline{A}$ is a relatively compact subset of $B$.
\end{definition}

One can easily show that if the polynomial semigroup $G$ is generated by a compact subset, 
then the minimal set $\{\infty\}$ is attracting. 
By using the hyperbolic metric, we have the following characterization of attracting minimal sets. 
For the proof, see \cite[Remark 3.5]{Sumi13} or \cite[Lemma 2.8]{SW22}. 

\begin{prop}
Let $G$ be the polynomial semigroup generated by a compact subset $\Gamma$ of $\Poly$. 
A minimal set $L \subset F(G)$ of $G$ is attracting if and only if the following holds. 
There exist $ N \in \NN$, $0 < \alpha < 1$, $C > 0$, and a neighborhood $W$ of $L$ such that 
for every $k \in \mathbb{N}$, $f_{1}, f_{2}, \dots f_{k N} \in \Gamma$, and $z , z_{0} \in W$, the spherical metric satisfies 
$$d (f_{{kN}}\circ \dots \circ f_{{1}} (z),  f_{{kN}}\circ \dots \circ f_{{1}} (z_{0}) ) \leq C {\alpha}^{k} d(z, z_{0}).$$  
\end{prop}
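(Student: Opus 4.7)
The plan is to use the hyperbolic metric on $U$ and the Schwarz--Pick lemma, hinging on the observation that $U$ is itself a hyperbolic domain: $U \Subset F(G) \subsetneq \rs$ and $\rs \setminus F(G)$ contains the Julia set of any $f_c \in \Gamma$, which is uncountable, so $U$ omits uncountably many points of $\rs$ and carries a well-defined hyperbolic metric $\rho_U$.

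For the forward direction, assume the attracting condition with $L \Subset V \Subset U \Subset F(G)$ and composition length $N$. Fix any $N$-fold composition $F = f_N \circ \cdots \circ f_1$ with $f_j \in \Gamma$; the hypothesis $F(U) \Subset V$ lets us regard $F$ as a holomorphic map $U \to V$ whose image lies in the relatively compact subset $V \Subset U$. Schwarz--Pick applied to $F : U \to V$ yields $\rho_V(F(z), F(w)) \le \rho_U(z, w)$ for $z, w \in U$. The key refinement is the inclusion-comparison estimate: since $\overline V \subset U$ and $\rho_V$ blows up on $\partial V$ while $\rho_U$ stays bounded on the compact set $\overline V$, the continuous ratio $\rho_U / \rho_V$ on $V$ extends continuously to $0$ on $\partial V$, and its maximum $\alpha$ on $\overline V$ satisfies $\alpha < 1$ and depends only on the pair $(V, U)$, crucially not on $F$. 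Combining the two gives $\rho_U(F(z), F(w)) \le \alpha \, \rho_U(z, w)$; iterating $k$ times yields $\rho_U$-contraction by $\alpha^k$ under any $kN$-fold composition (the image stays in the forward-invariant set $V$ throughout). Since $\overline V$ is a compact subset of $U$, the hyperbolic and spherical metrics are bi-Lipschitz equivalent there, and taking $W := V$ converts the estimate into a spherical one with a suitable constant $C$.

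For the converse, assume the exponential spherical contraction on a neighborhood $W$ of $L$, and use the forward $G$-invariance of the minimal set $L$. Choose $\epsilon > 0$ small enough that $U := \{z \in \rs : d(z, L) < \epsilon\}$ satisfies $\overline U \subset W \cap F(G)$. For any $kN$-composition $F$ from $\Gamma$ and any $z \in U$, pick $z_0 \in L$ with $d(z, z_0) \le \epsilon$; since $F(z_0) \in L$, the hypothesis gives $d(F(z), L) \le d(F(z), F(z_0)) \le C \alpha^k \epsilon$. Fixing $k$ so that $C \alpha^k < 1/2$ and setting $V := \{z : d(z, L) < \epsilon/2\}$ and $N' := kN$ produces $L \Subset V \Subset U \Subset F(G)$ with $F(U) \Subset V$ for every $N'$-composition from $\Gamma$, verifying Definition~\ref{def:attractingMinSet}.

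The main obstacle is the forward direction's claim that the contraction factor $\alpha$ can be chosen uniformly over all $N$-fold compositions $F$. This reduces to the purely geometric estimate $\sup_{\overline V} \rho_U / \rho_V < 1$, which is elementary but requires mild care when $U$ or $V$ is disconnected, in which case one argues componentwise, using that each component of $U$ is itself a hyperbolic Riemann surface. The subsequent transfer from hyperbolic to spherical contraction is then a routine compactness argument.
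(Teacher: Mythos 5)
Your method --- the hyperbolic metric on $U$, Schwarz--Pick applied to each $N$-fold composition $F\colon U\to V$, and the strict comparison $\lambda_U\le\alpha\lambda_V$ with $\alpha<1$ coming from $V\Subset U$ and independent of $F$ --- is exactly the standard route; note that the paper itself does not prove this proposition but defers to the cited references, which argue the same way. Your converse direction is complete and correct: it only ever compares a point $z\in U$ with a nearby point $z_0\in L$, and the resulting pair $V\Subset U$ with $F(U)\Subset V$ for every $kN$-fold composition verifies the definition of an attracting minimal set with $N'=kN$.

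In the forward direction, however, the issue you defer to the closing remark is not ``mild care'': the componentwise argument is the \emph{only} thing that can be true, and the displayed inequality as literally stated fails for $z,z_0$ in different components of $W$ whenever $L$ is disconnected. Concretely, take $\Gamma=\{f_{-1}\}$ and let $L=\{0,-1\}$ be the superattracting $2$-cycle, which is an attracting minimal set contained in $F(G)$. For $z$ near $0$ and $z_0$ near $-1$ one has $f_{-1}^{\circ 2k}(z)\to 0$ and $f_{-1}^{\circ 2k}(z_0)\to -1$, so the left-hand side stays bounded away from $0$ while $C\alpha^{k}d(z,z_0)\to 0$; no choice of $W$, connected or not, avoids this. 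Your Schwarz--Pick estimate genuinely produces the contraction only for $z,z_0$ in the same component of $V$ (the hyperbolic distance between points in different components being undefined or infinite), and the subsequent bi-Lipschitz comparison between $\rho_U$ and the spherical metric on $\overline{V}$ likewise only makes sense componentwise, after shrinking $W$ to the finitely many components of $V$ that meet $L$. So you should state explicitly that the forward implication is proved --- and can only hold --- with $z$ and $z_0$ restricted to a common component of $W$; with that reading made precise, your proof is correct, and your converse is unaffected since it only uses same-component pairs.
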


Now, we  present the classification theorem, which states that every minimal set either 
is attracting, intersects the Julia set, or intersects some rotation domain. 
Compare the following theorem with the classification theorem for autonomous iteration of a single map. 
For the latter classical theorem, see \cite[Theorem 16.1]{Mil06} or \cite[Theorem IV.2.1]{CG}. 

\begin{theorem}\label{th:class}
Let $G$ be the polynomial semigroup generated by a compact subset $\Gamma$ of $\Poly$. 
A minimal set $L \subset \rs$ of $G$ is either one of the following three. 
\begin{enumerate}
\item $L$ is attracting. 
\item $L \cap J(G) \neq \emptyset$. 
\item $L \subset F(G)$ and there exist $g \in G$ and a connected component $U$ of $F(G)$ with $L \cap U  \neq \emptyset$ 
such that $g(U) \subset U$ and $U$ is a subset of a Siegel disk of autonomous iteration of $g$. 
\end{enumerate}
\end{theorem}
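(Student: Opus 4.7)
The plan is to proceed by case analysis, splitting first on whether $L$ meets $J(G)$ and, if not, on whether a uniform hyperbolic contraction holds along all compositions of generators near $L$.

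\textbf{Step 1 (first reduction).} If $L \cap J(G) \neq \emptyset$, then case (2) holds, so I assume $L \subset F(G)$. Since $L$ is compact inside the open set $F(G)$ and the connected components of $F(G)$ form a pairwise disjoint open cover of $L$, only finitely many Fatou components $U_{1}, \ldots, U_{k}$ of $G$ meet $L$. Each $f \in \Gamma$ satisfies $f(L) \subset L$, is continuous, and hence sends each $U_{i} \cap L$ into a single Fatou component $U_{\pi_{f}(i)}$, inducing a self-map $\pi_{f}$ on the finite index set $\{1, \ldots, k\}$.

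\textbf{Step 2 (normality and hyperbolic setup).} Fix a neighborhood $W$ of $L$ with $\overline{W} \subset F(G)$. Because $J(G)$ is infinite (it contains the classical Julia set of any $f_{c}$ with $c \in \supp \mu$), each component $U_{i}$ is hyperbolic, and every generator $f \in \Gamma$ acts as a holomorphic map $U_{i} \to U_{\pi_{f}(i)}$, hence is non-expanding in the Poincar\'e metrics. The family $G|_{W}$ is normal, and the compactness of $\Gamma$ together with the forward invariance of $F(G)$ give uniform control of $g(W)$ for $g \in G$.

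\textbf{Step 3 (contraction dichotomy).} If there exist $N \in \NN$ and $\alpha \in (0,1)$ such that, for every choice of $f_{1}, \ldots, f_{N} \in \Gamma$, the composition $f_{N} \circ \cdots \circ f_{1}$ contracts the hyperbolic metric on each $U_{i}$ meeting $L$ by factor $\alpha$, then by the characterization of attracting minimal sets stated just before this theorem, $L$ is attracting, giving case (1). Otherwise, there exists a sequence $g_{n} \in G$ whose restriction to some $U_{i}$ fails to contract; by normality of $G|_{W}$, extract a subsequence converging locally uniformly to a non-constant holomorphic limit, and the Schwarz--Pick lemma forces this limit to be a local hyperbolic isometry.

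\textbf{Step 4 (producing a genuine Siegel element, main obstacle).} The main difficulty is upgrading this asymptotic isometric behavior to an actual element $g \in G$ with $g(U_{i}) \subset U_{i}$ whose Siegel disk of autonomous iteration contains $U_{i}$. By the pigeonhole principle applied to the $\Gamma$-induced maps on $\{U_{1}, \ldots, U_{k}\}$, combined with the minimality of $L$ (which gives that the orbit of any $z_{0} \in L \cap U_{i}$ under $G$ is dense in $L$), one selects a specific $g \in G$ with $g(U_{i}) \subset U_{i}$ inheriting the non-contracting behavior from the extracted subsequence. Applying the classical classification of periodic Fatou components to the single map $g$ (cf.\ \cite[Theorem 16.1]{Mil06}) rules out attracting and parabolic basins, which would contradict the lack of strict contraction on $L \cap U_{i}$, while Herman rings do not occur for polynomial dynamics; the Siegel disk alternative is all that remains, so $U_{i}$ is contained in a Siegel disk of $g$, yielding case (3). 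The hardest part is this pigeonhole-plus-minimality selection: guaranteeing that the chosen $g$ actually maps $U_{i}$ into itself while preserving the non-contracting behavior, rather than merely along a subsequential limit.
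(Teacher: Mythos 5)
The paper does not prove Theorem \ref{th:class}; it cites \cite[Lemma 3.8]{Sumi13}. Your overall architecture (finitely many Fatou components meet $L$, Schwarz--Pick on those components, a contraction dichotomy, then the classification of periodic Fatou components applied to a single $g\in G$) is the right one and matches the cited argument. Steps 1--3 are essentially sound, although in Step 3 the negation of ``attracting'' only tells you directly that the hyperbolic derivatives of length-$n$ words do not decay to $0$; to get derivatives \emph{close to $1$} you already need the submultiplicativity observation below.

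The genuine gap is Step 4, and it is exactly where you flag it. A locally uniform subsequential limit $\phi=\lim g_{n_l}$ of non-nested words is in general \emph{not} an element of $G$, so no amount of pigeonhole or minimality lets the classification of Fatou components be applied to it, and ``inheriting the non-contracting behavior'' is asserted rather than proved: each individual $g_{n_l}$ may still be a strict contraction on $U_i$. The missing mechanism is the following. Set $D_n=\sup\|(f_n\circ\dots\circ f_1)'(z)\|$ over all $f_1,\dots,f_n\in\Gamma$ and $z\in L$, the norm taken between the hyperbolic metrics of the components containing $z$ and its image. Then $D_{n+m}\le D_nD_m\le 1$, so if $L$ is not attracting one gets $D_n\ge D_{2^kn}^{1/2^k}\ge (\lim_m D_m)^{1/2^k}\to 1$, i.e.\ $D_n=1$ for every $n$; by compactness of $\Gamma^n\times(L\cap\bigcup_iU_i)$ (passing to a subsequence with constant component itinerary) the supremum is \emph{attained}, so some word $g=f_n\circ\dots\circ f_1$ has hyperbolic derivative exactly $1$ at a point of $L$. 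The chain rule then forces every sub-word to have derivative $1$ there, hence by the equality case of Schwarz--Pick every partial composition $f_m\circ\dots\circ f_{l+1}$ is a holomorphic covering between the components it connects; since the itinerary visits only $k$ components, two indices coincide and the corresponding sub-word $g_0\in G$ satisfies $g_0(U)=U$ as a self-covering, i.e.\ a hyperbolic isometry of $U$. Only now does your final paragraph apply: the $F(g_0)$-component containing $U$ is fixed, cannot be an attracting or parabolic basin (the iterates of $g_0$ on $U$ do not degenerate, and $L\cap U$ is a compact forward-invariant set keeping the orbit away from $\partial U$), and Herman rings are excluded for polynomials, leaving a Siegel disk. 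Without this attained-supremum/sub-word argument, Step 4 does not go through.
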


For the proof, see \cite[Lemma 3.8]{Sumi13}. 
The classification theorem leads to the following definitions. 
We say that a minimal set is {\it J-touching} if it is of type (ii) in Theorem \ref{th:class}. 
Besides, a minimal set is {\it sub-rotative} if it is of type (iii) in Theorem \ref{th:class}. 

However, these two types of minimal sets are uncommon compared with the attracting minimal sets 
as stated by Theorem \ref{th:listOfMinimalSets}.
Denote by $\mathcal{H}$ the set of all parameters $c \in \Mandel$ such that $f_{c}$ is hyperbolic. 

\begin{theorem}\label{th:listOfMinimalSets}
For every $c \in \CC$ and $r > 0$, define the polynomial semigroup $G_{c, r}$ 
as the semigroup generated by $\{f_{c'} \in \Poly \colon |c' - c| \leq r\}$. 
Here $f_{c}(z) = z^{2} + c$. 
Then $G_{c, r}$ has an attracting minimal set $\{\infty\}$. 
Other than $\{\infty\}$, possible planar minimal sets are described as follows. 
\begin{itemize}
\item If $c\not \in \mathcal{H}$, then $G_{c, r}$ has no planar minimal sets for every $r > 0$. 
\item If $c\in \mathcal{H}$, then there exists  $\rbif (c) > 0$ such that the following holds. 
\begin{itemize}
\item If $0 < r < \rbif (c)$, then $G_{c, r}$ has exactly one planar minimal set and it is attracting. 
\item If $r = \rbif (c)$, then $G_{c, r}$ has exactly one planar minimal set and it is not attracting. 
\item If $\rbif (c) < r$, then $G_{c, r}$ has no planar minimal sets. 
\end{itemize}
\end{itemize}
\end{theorem}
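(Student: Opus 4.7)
The plan is to address the theorem's assertions in sequence, relying on the classification Theorem \ref{th:class}, the monotonicity from Lemma \ref{lem:MinimalInclusion}, and structural stability of hyperbolic quadratic polynomials.

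First I would verify that $\{\infty\}$ is an attracting minimal set of $G_{c,r}$. By Lemma \ref{lem:JAK}(1) there is $R>0$ with $f_{c'}(\rs\setminus D_R)\subset \rs\setminus D_{2R}$ uniformly in $c'\in \B{c}{r}$; after enlarging $R$ slightly, the open sets $U=\rs\setminus \overline{D_R}$ and $V=\rs\setminus \overline{D_{2R}}$ satisfy Definition \ref{def:attractingMinSet} with $N=1$, and minimality follows from $f_{c'}(\infty)=\infty$. Next, for $c\notin\mathcal{H}$, I would suppose toward contradiction that $L$ is a planar minimal set and apply Theorem \ref{th:class}. In the attracting case, specializing all generators in Definition \ref{def:attractingMinSet} to $f_c$ gives $f_c^{\circ N}(U)\Subset V\Subset U$ on a bounded open $U\subset\CC$ (bounded because the basins of the disjoint minimal sets $L$ and $\{\infty\}$ are disjoint), so Schwarz--Pick on the hyperbolic surface $U$ yields an attracting fixed point of $f_c^{\circ N}$, i.e.\ an attracting cycle of $f_c$, forcing $c\in\mathcal{H}$, a contradiction. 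In the $J$-touching case, forward invariance of $J(G_{c,r})$ gives $L\subset J(G_{c,r})$, and a perturbation argument --- moving a point of $L$ by $f_{c'}$ for some $c'$ slightly off a prescribed composition --- produces an orbit point of $L$ in $F(G_{c,r})$, the required contradiction. The sub-rotative case is handled analogously, noting that the Siegel disk witnessing the classification is destroyed when any factor of the relevant composition is perturbed, so orbits of points in $L$ cannot remain in the Siegel disk, contradicting $L\subset F(G_{c,r})$.

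For $c\in\mathcal{H}$, I would set
\[\rbif(c)=\sup\bigl\{r>0:G_{c,r}\text{ admits a planar minimal set}\bigr\}.\]
Positivity of $\rbif(c)$ follows from hyperbolic persistence: for small $r$, every $f_{c'}$ with $c'\in\B{c}{r}$ shares a common attracting region near the attracting cycle of $f_c$, containing a planar minimal set by Zorn, attracting by uniform hyperbolic contraction. Finiteness is elementary: for $z_0\in L$ attaining $|z_0|=\max_L|z|$, forward invariance forces $\B{z_0^2+c}{r}\subset L$ and hence $|z_0|^2-|c|+r\le|z_0|$, giving $\rbif(c)\le|c|+\tfrac14$. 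Monotonicity from Lemma \ref{lem:MinimalInclusion} makes the defining set a downward-closed subinterval of $(0,\infty)$. For uniqueness I would apply Lemma \ref{lem:MinimalInclusion} with $\Gamma_1=\{f_c\}$: any planar minimal set of $G_{c,r}$ contains a minimal set of autonomous $f_c$, but repelling cycles and expanding invariant subsets of $J(f_c)$ all produce $r$-fattenings growing exponentially along the cycle multipliers and escaping any compact set, so only the unique attracting cycle of $f_c$ can survive; since two minimal sets sharing a point coincide, this pins down a unique planar minimal set. Finally, the attracting/non-attracting dichotomy at $r=\rbif(c)$ comes from a Hausdorff limit of the planar minimal sets along $r_n\nearrow\rbif(c)$: the limit is compact and planar (using Lemma \ref{lem:JAK}(2) for uniform containment in $\CC$) and forward $G_{c,\rbif(c)}$-invariant (by a double-limit continuity argument), hence contains the unique planar minimal set, which cannot be attracting by the openness of the attracting-minimal-set condition combined with the definition of $\rbif(c)$.

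The hardest part of this programme is the second step --- in particular, ruling out the $J$-touching and sub-rotative alternatives when $c\notin\mathcal{H}$ --- since these rely on genuinely exploiting the two-real-dimensional parameter support $\B{c}{r}$ (with $r>0$) to produce perturbations incompatible with the rigid Julia-set or Siegel-disk structure. A secondary subtlety lies in the limit analysis at $r=\rbif(c)$: one must prevent the Hausdorff limit from collapsing to $\{\infty\}$ (which follows from the uniform containment of the filled Julia sets in a fixed disk) and then upgrade the existence statement to the precise assertion that the limiting minimal set is unique and non-attracting.
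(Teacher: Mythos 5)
The paper never proves Theorem \ref{th:listOfMinimalSets} --- it is quoted from the mean-stability literature (cf.\ the pointer to Remark 4.10 of \cite{SW22}) --- so there is no in-paper argument to compare against, and I assess your proposal on its own terms. The skeleton is reasonable (Zorn plus Lemma \ref{lem:MinimalInclusion} for monotonicity, hyperbolic persistence for positivity, the bound $r\le|z_0|-|z_0|^2+|c|$, the increasing Hausdorff limit at $r=\rbif(c)$, and openness of Definition \ref{def:attractingMinSet} to rule out attractingness at the endpoint), but the case $c\notin\mathcal{H}$ --- which you yourself flag as the hardest step --- is not actually closed. In the $J$-touching case you implicitly use $L\subset J(G_{c,r})$, but for a semigroup $J(G)$ is only backward invariant ($g^{-1}(J(G))\subset J(G)$), not forward invariant, so minimality does not confine $L$ to $J(G_{c,r})$; and even if your perturbation placed an orbit point of $L$ in $F(G_{c,r})$, that would not contradict $L\cap J(G_{c,r})\neq\emptyset$. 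The sub-rotative case is likewise only gestured at. The missing idea is one you already write down for the finiteness bound but do not deploy here: forward invariance under the \emph{whole} generator family gives $\B{z^2+c}{r}\subset L$ for every $z\in L$, so $W:=\mathrm{int}(L)\neq\emptyset$. Since $f_{c'}^{\circ n}(W)\subset L$ for all $n$ and all $c'\in\B{c}{r}$, we get $W\subset\mathrm{int}(K(f_{c'}))$ for every $c'$, hence $\B{c}{r}\subset\Mandel$ (a Cantor filled Julia set has empty interior) and $c\in\mathrm{int}\,\Mandel$; moreover $f_c$ then has a bounded periodic Fatou component, which must be an attracting basin because a parabolic or indifferent cycle would force $c\in\partial\Mandel$. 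Thus $c\in\mathcal{H}$, and Theorem \ref{th:class} is not needed at all.

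Two further steps need repair. First, your uniqueness argument claims repelling cycles "produce $r$-fattenings\ldots escaping any compact set" --- nothing escapes, since all images remain in the compact set $L$; the correct observation is again that $f_c(L)\subset\mathrm{int}(K(f_c))$, so the $\langle f_c\rangle$-minimal subset of $L$ furnished by Lemma \ref{lem:MinimalInclusion} lies in the bounded Fatou set of $f_c$ and must be the (unique) attracting cycle; two planar minimal sets therefore meet, hence coincide. Second, the theorem asserts the planar minimal set is attracting for \emph{every} $0<r<\rbif(c)$, whereas hyperbolic persistence only yields this for small $r$ and your endpoint analysis only treats $r=\rbif(c)$; for intermediate $r$ pick $r<r'<\rbif(c)$ and verify Definition \ref{def:attractingMinSet} with $N=1$ and $U=\mathrm{int}(L_{r'})$, using that every generator of $G_{c,r}$ maps $L_{r'}$ into the set of points at distance at least $r'-r$ from $\partial L_{r'}$. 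With these three repairs --- all powered by the same disk-sweeping property of the family $\{z^2+c'\}_{|c'-c|\le r}$ --- the programme goes through.
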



Theorem \ref{th:listOfMinimalSets} motivates us to define the bifurcation radius as follows. 
Note that our bifurcation occurs at most once because $f_{c}$ is degree $2,$ 
and in general, our bifurcation occurs at most $d-1$ times if we work on polynomial maps of degree $d$. 

\begin{definition}\label{def:bifRad}
For every $c \in \CC$, we define the  bifurcation radius $\rbif (c) \geq 0$ according to the following cases.  
If $c \notin \mathcal{H}$, we define $\rbif (c) = 0$. 
If $c \in \mathcal{H}$, we define the  bifurcation radius $\rbif (c) > 0$ 
as the unique $r>0$ such that $G_{c, r}$ has the non-attracting planar minimal set.

Equivalently,  the  bifurcation radius $\rbif (c)$ is 
the maximum value for $r \geq 0$ such that  $G_{c, r}$ has a planar minimal set, or equivalently, 
is the infimum value for $r \geq 0$ such that  $G_{c, r}$ has no planar minimal sets. 
\end{definition}

The rest of this subsection is devoted to emphasizing the importance of deciding the bifurcation parameters 
by presenting some interesting phenomena. 
See \cite[Subsection 3.6]{Sumi11} for the details.

\begin{definition}\label{def:MS}
Let $G$ be the polynomial semigroup generated by a compact subset $\Gamma$ of $\Poly$. 
We say that $G$ is mean stable if all the minimal sets of $G$ are attracting. 
\end{definition}

By Theorem \ref{th:listOfMinimalSets}, 
for every $c \in \CC$ and $r \geq 0,$ 
the polynomial semigroup $G_{c, r}$ is mean stable if and only if $r \notin \{0, \rbif(c)\}.$  

Before describing the dynamics of a mean stable system, 
we define the function representing the probability of random orbits tending to an  attractor. 

\begin{definition}\label{def:T}
Let $\mu$ be a Borel probability measure on $\CC$ with compact support. 
For every attracting minimal set $L$ of  $G_{\mu}$, 
we define  the function ${T}_{L, \mu} \colon \rs \to [0, 1]$ by
$${T}_{L, \mu}(z) := \mathbb{P}_{\mu} (\{ \omega \in \Omega_{\mu} \colon d(\sample{\omega}{n} (z), L) \to 0 \, (n \to \infty) \} )$$ 
for every point  $z \in \rs $, 
where $d(w, L) = \min_{w' \in L} d(w, w')$. 
\end{definition} 

The following theorem reveals an interesting noise-induced order. 
That is,  almost every orbit is attracted by some attracting minimal set and 
the probability $T_{L, \mu}(z)$ tending to the attractor $L$ depends continuously on initial point $z$  
if the system is mean stable. 
For the detail,  see \cite[Theorem 3.15]{Sumi11}. 

\begin{definition}
For a polynomial semigroup $G$, define the kernel Julia set by 
\begin{align*}
J_\mathrm{ker}(G) = \bigcap_{g \in G} g^{-1}(J(G)).
\end{align*}
\end{definition}

\begin{theorem}\label{th:Sumi}
Let $\mu$ be a Borel probability measure on $\CC$ with compact support. 
If $J_\mathrm{ker}(G) = \emptyset$, then $T_{\{\infty\}, \mu}$ is continuous on $\rs.$ 
Moreover, 
the sum satisfies $\sum_{L} {T}_{L, \mu}(z) = 1$ for every $z \in \rs$ where the sum runs over all attracting minimal sets  $L$. 
\end{theorem}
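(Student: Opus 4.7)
The plan is to split the proof into two parts: first establish the identity $\sum_{L} T_{L, \mu}(z) = 1$ for every $z \in \rs$, and then deduce continuity of $T_{\{\infty\}, \mu}$ from it.

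For the identity, I would show that under $J_{\mathrm{ker}}(G_{\mu}) = \emptyset$, for every $z \in \rs$ almost every sample path $(\sample{\omega}{n}(z))_{n}$ eventually enters the Fatou set $F(G_{\mu})$ and then converges to one of the attracting minimal sets. Escape from $J(G_{\mu})$ is forced by the hypothesis: for each $z \in J(G_{\mu})$ there exists $g \in G_{\mu}$ with $g(z) \in F(G_{\mu})$, and by continuity $g$ sends a whole open neighborhood $U_{z}$ of $z$ into $F(G_{\mu})$. Using compactness of $J(G_{\mu})$ (which follows from compactness of $\supp \mu$), I would extract a finite subcover and obtain simultaneously a uniform word length $N$ and a uniform lower bound $p_{0} > 0$ on the $\mathbb{P}_{\mu}$-probability that a length-$N$ block of parameters drives the chosen point into $F(G_{\mu})$. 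Independence across disjoint blocks, together with a Borel–Cantelli argument, then gives almost-sure escape into $F(G_{\mu})$ in finite time. Once inside $F(G_{\mu})$, the normal family property and the classification Theorem \ref{th:class} imply that every subsequential limit of the orbit lies in some minimal set of $G_{\mu}$; the hypothesis $J_{\mathrm{ker}} = \emptyset$ rules out $J$-touching minimal sets (their $J(G_{\mu})$-part is forward $G_{\mu}$-invariant and must lie in $J_{\mathrm{ker}}$), and the sub-rotative case is incompatible with the i.i.d.\ randomness because following a persistent Siegel-type rotation has probability zero under $\mathbb{P}_{\mu}$. The orbit therefore converges to an attracting minimal set almost surely, so the probabilities $T_{L, \mu}(z)$ over attracting $L$ sum to one.

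For continuity of $T_{\{\infty\}, \mu}$, I would first note that the basin of attraction of any attracting minimal set $L$ is open, since Definition \ref{def:attractingMinSet} supplies a neighborhood of $L$ that is uniformly contracted after $N$ iterates; consequently each $T_{L, \mu}$ is lower semi-continuous on $\rs$. Because distinct attracting minimal sets need pairwise disjoint relatively compact contracting neighborhoods inside $F(G_{\mu})$, there are only finitely many of them. Combining finiteness with $\sum_{L} T_{L, \mu} \equiv 1$, each $T_{L, \mu}$ is also upper semi-continuous as $1 - \sum_{L' \neq L} T_{L', \mu}$, hence continuous; applying this to $L = \{\infty\}$ gives the claim.

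The main obstacle will be the escape-from-Julia step: producing a uniform word length $N$ and a uniform positive probability $p_{0}$ at the same time requires combining the openness of $F(G_{\mu})$ with compactness and continuity of finite compositions, after which upgrading pointwise escape to almost-sure escape needs the independence structure. A secondary delicate point is making rigorous the exclusion of the sub-rotative case, i.e.\ showing that any Siegel-type trap in $F(G_{\mu})$ is destroyed by the i.i.d.\ perturbations with full $\mathbb{P}_{\mu}$-probability; once these two technical points are settled, the rest is soft topology.
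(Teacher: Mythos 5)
The paper itself offers no proof of this theorem --- it is imported from \cite[Theorem 3.15]{Sumi11} --- so there is no internal argument to compare against; your attempt must stand on its own. Its global architecture is reasonable: the escape-from-$J(G_{\mu})$ step (finite subcover of $J(G_{\mu})$, uniform block length and probability, conditioning across blocks, forward invariance of $F(G_{\mu})$) is sound and is essentially the technique the paper itself uses in Theorem \ref{th:T0=1ImpliesTypFastEsc}, and the closing step (lower semicontinuity of each $T_{L,\mu}$ via Fatou, plus finiteness, plus a partition of unity, gives continuity) is correct \emph{if} its inputs are available.

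The fatal gap is the middle step, ``the orbit therefore converges to an attracting minimal set almost surely.'' First, your exclusion of $J$-touching minimal sets assumes that $L\cap J(G_{\mu})$ is forward invariant; for a semigroup $J(G)$ is only \emph{backward} invariant ($g^{-1}(J(G))\subset J(G)$), not forward invariant, so this set need not be forward invariant and the inclusion into $J_{\mathrm{ker}}(G_{\mu})$ does not follow. Second, the conclusion you need is actually false under the stated hypotheses: the paper's own example $G_{0,1/4}$ has $J_{\mathrm{ker}}(G_{0,1/4})=\emptyset$ by Lemma \ref{lem:kernelEmpty}, yet $\overline{D_{1/2}}$ is a forward-invariant, $J$-touching, non-attracting minimal set containing $0$, so every random orbit of $z=0$ stays in $\overline{D_{1/2}}$; since $\{\infty\}$ is the only attracting minimal set there (Theorem \ref{th:listOfMinimalSets} at $r=\rbif(0)$), one gets $\sum_{L\ \mathrm{attracting}}T_{L,\mu}(0)=T_{\{\infty\},\mu}(0)=0$. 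Thus almost-sure convergence to an \emph{attracting} minimal set cannot be extracted from $J_{\mathrm{ker}}=\emptyset$ alone; the version actually proved in \cite{Sumi11} sums $T_{L,\mu}$ over \emph{all} (finitely many) minimal sets, with $T_{L,\mu}$ extended to non-attracting $L$, and that is the form one must either prove or quote (or else add a hypothesis, such as mean stability, forcing every minimal set to be attracting). In addition, your dismissal of the sub-rotative case as ``probability zero'' is an assertion, not an argument --- in \cite{Sumi11} this is handled through the spectral analysis of the transition operator, not by a direct probabilistic observation --- and pairwise disjointness of neighborhoods does not by itself bound the number of minimal sets, so the finiteness used in your continuity step also remains unproved.
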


The following gives a useful sufficient condition for $J_\mathrm{ker}(G) = \emptyset$. 

\begin{lemma}\label{lem:kernelEmpty}
Let $\mu$ be a Borel probability measure on $\CC$ with compact support. 
If $\mathrm{int} (\supp \mu) \neq \emptyset$, then $J_\mathrm{ker}(G_{\mu}) = \emptyset$. 
\end{lemma}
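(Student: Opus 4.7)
The plan is to argue by contradiction. Suppose there exists $z_0 \in J_\mathrm{ker}(G_\mu)$; I will use the openness of $\mathrm{int}(\supp \mu)$ to produce a nonempty open subset of $\CC$ contained in $J_\mathrm{ker}(G_\mu)$, then show that every element of $G_\mu$ is bounded by a fixed constant on this open set, and finally apply Montel's theorem to obtain a contradiction with the fact that the same open set must lie inside $J(G_\mu)$.

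I would first record two elementary invariance properties. The first is that $J_\mathrm{ker}(G_\mu)$ is forward invariant under $G_\mu$: if $z \in J_\mathrm{ker}(G_\mu)$ and $g \in G_\mu$, then for every $h \in G_\mu$ the composition $h \circ g$ also lies in $G_\mu$, so $h(g(z)) \in J(G_\mu)$, whence $g(z) \in J_\mathrm{ker}(G_\mu)$. The second is the inclusion $J_\mathrm{ker}(G_\mu) \subset J(G_\mu)$. This follows from the standard semigroup fact that $F(G_\mu)$ is forward invariant under each element of $G_\mu$, equivalently $g^{-1}(J(G_\mu)) \subset J(G_\mu)$ for every $g \in G_\mu$, so each set in the intersection defining $J_\mathrm{ker}(G_\mu)$ is already contained in $J(G_\mu)$.

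Applying forward invariance with $g = f_c$ for each $c \in \mathrm{int}(\supp \mu)$ shows that the set
\[
U := z_0^2 + \mathrm{int}(\supp \mu) = \{f_c(z_0) : c \in \mathrm{int}(\supp \mu)\}
\]
lies in $J_\mathrm{ker}(G_\mu)$, and by hypothesis $U$ is a nonempty open subset of $\CC$. Next, Lemma \ref{lem:JAK}(1) provides some $R > 0$ such that $J(G_\mu) \subset \overline{D_R}$: indeed any $z$ with $|z| > R$ satisfies $\sample{\omega}{n}(z) \to \infty$ for every $\omega \in \Omega_\mu$, so such $z$ lies in $F(G_\mu)$. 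Because $U \subset J_\mathrm{ker}(G_\mu)$, every $g \in G_\mu$ satisfies $g(U) \subset J(G_\mu) \subset \overline{D_R}$, hence the family $\{g|_U : g \in G_\mu\}$ of holomorphic maps on the open set $U$ is uniformly bounded by $R$. Montel's theorem then forces this family to be normal on $U$, giving $U \subset F(G_\mu)$, which contradicts $U \subset J_\mathrm{ker}(G_\mu) \subset J(G_\mu)$. Therefore $J_\mathrm{ker}(G_\mu) = \emptyset$.

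The argument requires essentially no calculation; the only step deserving care is the backward invariance of $J(G_\mu)$ under each generator (equivalently, forward invariance of $F(G_\mu)$), which is the standard but nontrivial starting point of the theory of rational/polynomial semigroups. Once that is granted, the entire proof reduces to a single application of Montel's theorem exploiting the open mapping $c \mapsto f_c(z_0) = z_0^2 + c$.
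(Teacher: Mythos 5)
Your proof is correct: the forward invariance of $J_\mathrm{ker}(G_{\mu})$, the openness of $c\mapsto z_{0}^{2}+c$, the boundedness of $J(G_{\mu})$ from Lemma \ref{lem:JAK}(1), and Montel's theorem fit together exactly as you describe, and the one nontrivial ingredient (forward invariance of $F(G_{\mu})$ under each $g\in G_{\mu}$, hence $J_\mathrm{ker}(G_{\mu})\subset J(G_{\mu})$) is correctly identified and is standard from Hinkkanen--Martin. The paper itself gives no proof but defers to \cite[Lemma 5.34]{Sumi11}, and your argument is essentially that standard one, so there is nothing to correct.
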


See \cite[Lemma 5.34]{Sumi11} for the proof. 
Notably, the continuity of ${T}_{L, \mu}$ in Theorem \ref{th:Sumi} does not occur in deterministic dynamical systems. 
Namely, if $\mu$ is the Dirac measure at $c$, 
then the function $T_{\{\infty\}, \mu}$ takes $1$ on $A(f_{c})$ and $0$ on $K(f_{c})$, 
where $A(f_{c}) = A_{\omega}$ and $K(f_{c}) = K_{\omega}$ for $\omega = (c, c, \dots)$. 
Thus, $T_{\{\infty\}, \mu}$ is discontinuous on the autonomous Julia set $J(f_{c}) = J_{\omega}$.  

Theorem \ref{th:Sumi} suggests that the dynamics of a mean stable system are similar to those of a hyperbolic map. 
In the latter case, 
every orbit on the Fatou set is contracted to an attracting cycle, while the complementary Julia set has zero areas. 

The set of all hyperbolic rational maps is conjectured to be open and dense in the space of all rational maps. 
Even in the quadratic polynomial case, whether or not $\overline{\mathcal{H}} = \Mandel$ remains unsolved. 
As is well-known, this conjecture can be affirmatively solved if one prove that the boundary $\partial\Mandel$ is locally connected. 
Meanwhile, it is still open whether or not the boundary of the Mandelbrot set has a positive area, 
although Shishikura showed that it has the full Hausdorff dimension. 

Compared with the autonomous case,  
the following results show that we can solve random versions of the abovementioned conjectures. 
For the proof, see \cite{SW22} and \cite{Sumi13}.  

\begin{theorem}\label{th:RDSversion}
For every $c \in \CC$ and $r \geq 0$, define the polynomial semigroup $G_{c, r}$ 
as the semigroup generated by $\{f_{c'} \in \Poly \colon |c' - c| \leq r\}$. 
Then we have all the following statements. 
\begin{enumerate}
\item The set $\{ (c, r) \in \CC \times [0, \infty ) \colon G_{c, r} \text{ is mean stable} \}$ 
is open and dense in $\CC \times [0, \infty )$ with the usual topology. 
\item For all but countably many $r \in  [0, \infty )$, the set $\{c \in \CC \colon G_{c, r} \text{ is not mean stable} \}$ has zero two-dimensional Lebesgue measure. 
\end{enumerate} 
\end{theorem}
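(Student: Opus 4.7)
The plan is to derive Theorem \ref{th:RDSversion} directly from the characterization, observed just after Theorem \ref{th:listOfMinimalSets}, that $G_{c,r}$ is mean stable if and only if $r\in(0,\infty)$ and $r\neq\rbif(c)$, together with the $1$-Lipschitz continuity of $c\mapsto\rbif(c)$ furnished by Main Result \ref{mr:estimates}(ii). Under this reduction the mean-stable locus is precisely
$$\mathcal{S}:=\{(c,r)\in\CC\times[0,\infty) : r>0,\ r\neq\rbif(c)\},$$
and the whole theorem becomes a statement about the level sets of a continuous scalar function on $\CC$.

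For part (i), I would first establish openness by noting that the complement $(\CC\times[0,\infty))\setminus\mathcal{S}$ decomposes as $(\CC\times\{0\})\cup\Gamma$, where $\Gamma=\{(c,\rbif(c)):c\in\CC\}$ is the graph of $\rbif$; both pieces are closed, the second one precisely because $\rbif$ is continuous. For density, given any $(c_0,r_0)\in\CC\times[0,\infty)$ and $\varepsilon>0$ I keep $c=c_0$ fixed and pick
$$r\in\bigl(\max\{0,r_0-\varepsilon\},\,r_0+\varepsilon\bigr)\setminus\{0,\rbif(c_0)\},$$
which is possible because at most two points are removed from a non-degenerate open interval; the pair $(c_0,r)$ then lies in $\mathcal{S}$.

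For part (ii), for each fixed $r>0$ the non-mean-stable parameters form the level set $N_r:=\rbif^{-1}(\{r\})$. Because $\rbif\equiv 0$ outside $\mathcal{H}$, we have $N_r\subset\mathcal{H}\subset\Mandel$ for every $r>0$, so $N_r$ is a bounded closed set of finite two-dimensional Lebesgue measure. The $N_r$ with $r>0$ are pairwise disjoint subsets of the bounded set $\mathcal{H}$, so stratifying by $A_n:=\{r>0 : |N_r|_{\mathrm{Leb}}\geq 1/n\}$, each $A_n$ must be finite (otherwise $|\mathcal{H}|_{\mathrm{Leb}}=\infty$), whence $\bigcup_{n\in\NN}A_n$ is countable. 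Adjoining $r=0$, where $N_0=\CC\setminus\mathcal{H}$ may have positive (indeed infinite) measure, produces the desired countable exceptional set.

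The only genuinely delicate point is recognising that Theorem \ref{th:RDSversion} reduces, via the iff-characterisation and the $1$-Lipschitz property of $\rbif$, to pure measure-theoretic facts about level sets of a continuous, compactly supported function; the substantive analytic content is already absorbed into Theorem \ref{th:listOfMinimalSets} and Main Result \ref{mr:estimates}(ii). I do not foresee any further obstacle, although care must be taken in part (ii) to separate the anomalous value $r=0$ from the generic $r>0$.
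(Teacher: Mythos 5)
Your proposal is correct. Note first that the paper itself does not prove Theorem \ref{th:RDSversion}; it cites the general genericity results of \cite{SW22} and \cite{Sumi13}, which are established for broad classes of random holomorphic dynamical systems. Your argument is therefore a genuinely different (and, in this quadratic setting, more elementary) route: you collapse the whole theorem onto the single scalar function $\rbif$, using the characterization ``$G_{c,r}$ is mean stable iff $r\notin\{0,\rbif(c)\}$'' stated after Definition \ref{def:MS} together with the $1$-Lipschitz continuity from Theorem \ref{th:1-Lip}. Openness then follows because the complement is the union of the closed slice $\CC\times\{0\}$ and the closed graph of $\rbif$; density follows by perturbing $r$ alone; and part (ii) follows from the standard fact that the pairwise disjoint level sets $\rbif^{-1}(\{r\})$, $r>0$, all sit inside the bounded set $\mathcal{H}$, so only countably many can carry positive area. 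There is no circularity: Theorem \ref{th:1-Lip} is proved from Lemma \ref{lem:MinimalInclusion} and Definition \ref{def:bifRad} only, even though it appears later in the paper than Theorem \ref{th:RDSversion}. The trade-off is that your proof is specific to the one-complex-parameter quadratic family with disk-supported noise, whereas the cited references yield the statement as a special case of a much more general density theorem. Two cosmetic points: at $r=0$ the non-mean-stable set is all of $\CC$ (every $G_{c,0}$ has repelling cycles as non-attracting minimal sets), not the level set $N_0=\CC\setminus\mathcal{H}$ as you write --- immaterial, since $r=0$ is excluded as an exceptional value either way; and in the density step the point $0$ never lies in the open interval you consider, so only the single point $\rbif(c_0)$ actually needs to be avoided.
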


We also have the following tameness if noise is added. 

\begin{theorem}\label{th:AlmostSureAreaZero}
Let  $\mu$ be a Borel probability measure  on  $\CC$ with compact support. 
If $J_\mathrm{ker}(G) = \emptyset$,  then the random Julia set $J_{\omega}$ has zero two-dimensional Lebesgue measure for $\PP_{\mu}$-almost every $\omega \in \Omega_{\mu}$. 
\end{theorem}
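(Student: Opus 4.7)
My plan is to use a Fubini argument combined with Sumi's continuity theorem (Theorem \ref{th:Sumi}). The aim is to show that for every individual starting point $z \in \rs$, the probability $\PP_\mu(\{\omega : z \in J_\omega\})$ equals zero; integrating this against planar Lebesgue measure will then yield $\mathrm{area}(J_\omega) = 0$ for $\PP_\mu$-a.e.\ $\omega$.

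For the pointwise statement, I would start from the forward invariance in Lemma \ref{lem:JAK}(4): if $z \in J_\omega$, then $\sample{\omega}{n}(z) \in J_{\sigma^n \omega}$ for every $n \geq 0$, and in particular $\sample{\omega}{n}(z) \in J(G_\mu)$ for every $n$, by Proposition \ref{prop:J}. On the other hand, every attracting minimal set $L$ of $G_\mu$ is compactly contained in the Fatou set $F(G_\mu)$ by Definition \ref{def:attractingMinSet}, so $\dist(L, J(G_\mu)) > 0$. Therefore an orbit that stays in $J(G_\mu)$ cannot satisfy $d(\sample{\omega}{n}(z), L) \to 0$ for any attracting minimal set $L$. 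Combining these observations gives the inclusion
\begin{equation*}
\{\omega : z \in J_\omega\} \subset \Bigl\{\omega : d(\sample{\omega}{n}(z), L) \not\to 0 \text{ for every attracting minimal set } L\Bigr\}.
\end{equation*}
By hypothesis $J_\mathrm{ker}(G_\mu) = \emptyset$, so Theorem \ref{th:Sumi} yields $\sum_L T_{L, \mu}(z) = 1$ for every $z$; hence the right-hand event has $\PP_\mu$-measure zero. This proves $\PP_\mu(\{\omega : z \in J_\omega\}) = 0$ for every $z \in \rs$.

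To finish, I would invoke Fubini on the product space $(\Omega_\mu \times \rs, \PP_\mu \otimes \mathrm{area})$:
\begin{equation*}
\int_{\Omega_\mu} \mathrm{area}(J_\omega) \, d\PP_\mu(\omega) = \int_{\rs} \PP_\mu(\{\omega : z \in J_\omega\}) \, d\mathrm{area}(z) = 0,
\end{equation*}
which forces $\mathrm{area}(J_\omega) = 0$ for $\PP_\mu$-almost every $\omega$.

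The main technical obstacle is the joint measurability of the indicator $(\omega, z) \mapsto \chi_{J_\omega}(z)$ needed to apply Fubini. Since $J_\omega = \partial K_\omega$ and both $K_\omega$ and $A_\omega$ admit descriptions via continuous iterates $\sample{\omega}{n}$ (Lemma \ref{lem:JAK}(2)), one checks that $\{(\omega, z) : z \in A_\omega\}$ is open in $\Omega_\mu \times \rs$ and that $\{(\omega, z) : z \in K_\omega\}$ is its complement, hence measurable; the boundary $J_\omega$ can then be expressed using countably many conditions over a dense rational grid, giving joint Borel measurability. This step is routine but is the only place where care is required; all other ingredients follow cleanly from Proposition \ref{prop:J}, the invariance in Lemma \ref{lem:JAK}(4), and Theorem \ref{th:Sumi}.
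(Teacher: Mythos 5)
Your argument is correct, and since the paper itself defers the proof of this theorem to \cite{Sumi13} and \cite{SW22}, where the argument is exactly this Fubini-plus-cooperation-principle scheme, you are following essentially the same route: forward invariance gives $\sample{\omega}{n}(z)\in J_{\sigma^{n}\omega}\subset J(G_{\mu})$ whenever $z\in J_{\omega}$, attracting minimal sets satisfy $\dist(L,J(G_{\mu}))>0$, and $\sum_{L}T_{L,\mu}(z)=1$ then forces $\PP_{\mu}(\{\omega\colon z\in J_{\omega}\})=0$ for each $z$. The only point worth making explicit is that distinct minimal sets are disjoint compacta, so the events $\{\omega\colon d(\sample{\omega}{n}(z),L)\to 0\}$ are pairwise disjoint and their union has full measure (mere subadditivity of $\sum_{L}T_{L,\mu}(z)=1$ would not suffice); your measurability sketch, writing $\{(\omega,z)\colon z\in J_{\omega}\}$ as $\{z\in K_{\omega}\}\cap\{z\in\overline{A_{\omega}}\}$ with the closure tested along a countable dense set, is adequate for the application of Tonelli's theorem.
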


One may mistakenly consider that Theorem \ref{th:AlmostSureAreaZero} is insignificant  
since it is difficult to construct autonomous  Julia sets with positive areas. 
However, compared with the autonomous case, 
constructing the non-autonomous Julia set of positive areas is not very difficult. 
Jones {\it et al.} found that  there exists $\omega = (c_{n})_{n=1}^{\infty}$ with $c_{n} \in \{0,\ 1/2,\ 1/4\}$ for all $n \in \NN$ such that $J_{\omega}$ has positive areas.  
They use the parabolic fixed point $1/2$ of the map $f_{1/4}.$ 
For the proof, see \cite[Section 5.2]{Com01}. 
Developing the idea, Comerford proved the existence of measurable invariant sequences of line fields \cite{Com13}. 

As presented above, the dynamics of mean stable systems are similar to those of hyperbolic polynomial maps. 
However, they cause interesting phenomena that cannot occur in deterministic dynamical systems, 
e.g., continuity of $T_{L, \mu}$, finiteness of the set of all minimal sets, etc. 
This motivates us to determine the bifurcation radius where the mean stability breaks.


\section{Total disconnectedness}\label{sec:TotDisconn}
The systematic study of random iterations of the quadratic polynomial was done by Br\"uck and B\"uger. 
In particular, Br\" uck, B\"uger, and Reitz investigated the (dis)connectedness of the non-autonomous Julia sets in \cite{bbr}.  
Here, we present some conditions for the random Julia set to be connected or totally disconnected. 

The following \cite[Theorem 1.1]{bbr} shows that the critical orbits determine connectedness of Julia sets. 
Compared with the autonomous case, we need to modify the statement suitably.  

\begin{theorem}\label{th:conditionToBeConnected}
Let $\omega = (c_{n})_{n =1}^{\infty} \in \CC^{\NN}$. 
Then the non-autonomous Julia set $J_{\omega}$ is connected if and only if the orbit $\{ \sample{\sigma^{k}\omega}{n}(0) \}_{n=1}^{\infty}$ is bounded in $\CC$ for every $k \in \NN$.  
\end{theorem}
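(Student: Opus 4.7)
The plan is to reduce the statement to the filled Julia set $K_\omega$ and then exploit the recursion $K_\omega = f_{c_1}^{-1}(K_{\sigma\omega})$ from Lemma \ref{lem:JAK}, combined with an elementary Riemann--Hurwitz bookkeeping at each step.

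First, I would establish the intermediate equivalence that $J_\omega$ is connected if and only if $K_\omega$ is connected. For the nontrivial direction, one shows that $A_\omega$ is a connected open neighborhood of $\infty$ in $\rs$: any $z \in A_\omega$ eventually satisfies $\sample{\omega}{n}(z) \in \rs \setminus \overline{D_R}$, and lifting a path from $\sample{\omega}{n}(z)$ to $\infty$ inside this simply connected region through the polynomial $\sample{\omega}{n}$ produces an arc in $A_\omega$ joining $z$ to $\infty$. If moreover $K_\omega$ is connected, then $A_\omega$ has connected complement on $\rs$, hence is simply connected, and its boundary $J_\omega$ is a connected continuum. Conversely, a decomposition $K_\omega = K_1 \sqcup K_2$ into disjoint nonempty compacts yields disjoint nonempty boundaries and disconnects $J_\omega$.

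Next, I would prove the one-step lemma that $K_\omega$ is connected if and only if $0 \in K_\omega$ and $K_{\sigma\omega}$ is connected. For $(\Rightarrow)$, $K_{\sigma\omega} = f_{c_1}(K_\omega)$ is the continuous image of a connected set, hence connected; and if $0 \notin K_\omega$ then the critical value $c_1 = f_{c_1}(0)$ lies in $A_{\sigma\omega}$, so the simply connected domain $A_{\sigma\omega}$ contains both critical values $c_1$ and $\infty$ of $f_{c_1}$. Applying Riemann--Hurwitz to $A_\omega = f_{c_1}^{-1}(A_{\sigma\omega})$ gives $\chi(A_\omega) = 2 \cdot 1 - 2 = 0$, so $A_\omega$ is an annulus and $K_\omega$ is disconnected, a contradiction. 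For $(\Leftarrow)$, $A_{\sigma\omega}$ is a disk whose only $f_{c_1}$-critical value is $\infty$, so $\chi(A_\omega) = 2 \cdot 1 - 1 = 1$, $A_\omega$ is again a disk, and hence $K_\omega$ is connected.

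Finally, I would iterate. The forward direction is immediate: applying the one-step lemma recursively along $\omega, \sigma\omega, \sigma^2\omega, \dots$ forces $0 \in K_{\sigma^k\omega}$ for every $k \geq 0$, i.e., each shifted critical orbit is bounded. For the converse, I would use the truncated sets $V_n^{(k)} := (\sample{\sigma^k\omega}{n})^{-1}(\overline{D_R})$, which decrease to $K_{\sigma^k\omega}$. An induction on $n$ using $V_n^{(k)} = f_{c_{k+1}}^{-1}(V_{n-1}^{(k+1)})$ and the same Riemann--Hurwitz analysis shows that $V_n^{(k)}$ is connected and has simply connected complement in $\rs$, provided $|\sample{\sigma^k\omega}{m}(0)| \leq R$ for every $1 \leq m \leq n$. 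The forward invariance $f_{c'}(\rs \setminus D_R) \subset \rs \setminus D_{2R}$ from Lemma \ref{lem:JAK} prevents a bounded critical orbit from leaving $\overline{D_R}$, so the hypothesis supplies these inequalities. Then $K_\omega = \bigcap_n V_n^{(0)}$ is a decreasing intersection of compact connected sets and is connected. The main technical obstacle is the Riemann--Hurwitz bookkeeping in the one-step lemma, where one must track which critical values of $f_{c_1}$ belong to the simply connected set $A_{\sigma\omega}$ and handle the case distinction cleanly; once this is in place, the iteration is routine.
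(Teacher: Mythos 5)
The paper does not actually prove this theorem: it is quoted (with slightly shifted indexing) from Br\"uck--B\"uger--Reitz \cite[Theorem 1.1]{bbr}, so there is no internal proof to compare against, and your argument has to stand on its own. It does. It is a correct, self-contained rendition of the standard non-autonomous version of the Fatou--Julia connectivity criterion: reduce connectedness of $J_{\omega}$ to that of $K_{\omega}$, control the topology of $A_{\omega}=f_{c_{1}}^{-1}(A_{\sigma\omega})$ via Riemann--Hurwitz according to whether the critical value $c_{1}=f_{c_{1}}(0)$ lies in $K_{\sigma\omega}$, and, for the converse, correctly replace the non-terminating backward recursion by the nested compacta $V_{n}^{(k)}=(\sample{\sigma^{k}\omega}{n})^{-1}(\overline{D_{R}})$ and take the decreasing intersection of continua. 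Three points to make explicit in a write-up. First, the uniform escape radius $R$ of Lemma \ref{lem:JAK} exists only for bounded sequences (the lemma is stated for $\omega\in\Omega_{\mu}$); the theorem should carry the hypothesis $\sup_{n}|c_{n}|<\infty$ as in \cite{bbr}, and your converse genuinely uses it to force every critical orbit into $\overline{D_{R}}$. Second, the step ``$K_{\omega}$ connected $\Rightarrow$ $\partial A_{\omega}$ connected'' rests on the unicoherence of $\rs$ (if $\rs=\overline{A_{\omega}}\cup K_{\omega}$ with both pieces closed and connected, the intersection $\partial A_{\omega}$ is connected); this is standard but should be cited rather than asserted. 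Third, the connectedness of $A_{\omega}$ and of each $\rs\setminus V_{n}^{(k)}$ is cleanest not via path lifting through branch points but by noting that every component of a proper preimage of a connected set maps onto it, hence contains the unique preimage $\infty$ of $\infty$. Incidentally, your proof establishes the equivalence with boundedness for all $k\geq 0$, which is the correct form of the statement: the $k=0$ case is not implied by $k\geq 1$, since $\sample{\omega}{n}(0)=\sample{\sigma\omega}{n-1}(c_{1})$ is the orbit of $c_{1}$, not of $0$, under $\sigma\omega$.
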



Recall that if $\omega = (c_{n})_{n=1}^{\infty}$, then $\sample{\sigma^{k}\omega}{n} = \random{c_{k+n}}{c_{k+2}}{c_{k+1}}$ by definition. 
We remark on the difficulty of the disconnected case compared with the autonomous case. 

\begin{remark}\label{rem:connectedDifficult}
For an autonomous case, the Julia set of a quadratic polynomial is either connected or totally disconnected  \cite[Problem 9-f]{Mil06}. 
However, for a non-autonomous case, we can construct a Julia set that has more than one but finitely many connected components. 
For instance, if $|c_{1}| > 1$ and $c_{n} =0$ for every $n \geq 2$, 
then $J_{\omega} = f_{c_{1}}^{-1}(J_{\sigma \omega})$ has exactly two connected components where $\omega = (c_{n})_{n=1}^{\infty}$. 
Similarly, we can construct  various non-autonomous Julia sets with finitely many connected components. 
Furthermore, there exists $\omega = (c_{n})_{n=1}^{\infty}$ such that $|c_{n}| < 2$ for every $n \in \NN$ and 
$\sample{\sigma^{k}\omega}{n}(0) \to \infty$ as $n \to \infty$ for {\it every} $k \in \NN$, 
but the filled Julia set $K_{\omega}$ contains a segment (see \cite[Example 4.4]{bbr}). 
Thus, it is interesting to determine when  the non-autonomous Julia set $J_{\omega}$ is connected, has finitely many connected components, is totally disconnected, and so on. 
\end{remark}

For the case when $\mu$ is the normalized Lebesgue measure on the disk $\B{0}{r}$, 
Br\" uck, B\"uger, and Reitz showed that $r = 1/4$ is the critical value 
where connectedness drastically changes as we see below. 
If $r \leq 1/4,$ then  the random Julia set $J_{\omega}$ is connected for every $\omega \in \Omega_{\mu}$ \cite[Remark 1.2]{bbr}.  
Br\" uck, B\"uger, and Reitz then questioned whether the random Julia set $J_{\omega}$ is {\it totally} disconnected almost surely if $r > 1/4$. 

Lech and Zdunik \cite{LZ22} showed that it is true. 
We pay attention to their idea for a while.

\begin{definition}\label{def:escapinftime}
Let $\mu$ be a Borel probability measure on  $\CC$ with compact support. 
Let  $R > 0$ be a large number as in Lemma \ref{lem:JAK} (1). 

For every $\omega \in \Omega_{\mu}$, define the escape time $k(z, \omega)$ of $z$ from $D_{R}$ by 

\begin{align*}
k(z, \omega) = 
\begin{cases}
\min \{n \in \NN \colon | \sample{\omega}{n}(z) | > R \} & \text{if } z \in A_{\omega}, \\ 
\infty  & \text{if } z \in K_{\omega}.
\end{cases}
\end{align*}
We say that the critical point $0$ is {\it typically fast escaping} if 
there exists $\gamma > 0$ such that $\PP_{\mu}(\{\omega \in \Omega_{\mu} \colon k(0, \omega) > k\}) \leq e^{{-\gamma k}}$ for every $k \in \NN$. 
\end{definition}

Note that the escape time is related to the value of the Green function 
$$G_{\omega} (z) = \lim_{n \to \infty} \frac{1}{2^{n}} \log_{+}|\sample{\omega}{n}(z) |,$$ 
where $\log_{+}a = \max\{a, \ 0\}$ for $a \in \RR.$  
By \cite[Proposition 8]{LZ22}, 
there exists $C > 0$ such that for every $\omega \in \Omega_{\mu}$ and $z \in A_{\omega} \cap D_{R}$, 
we have 
$$C^{-1} 2^{- k(z, \omega)} \leq G_{\omega} (z) \leq C 2^{- k(z, \omega)}.$$ 
Hence, typical fast escaping can be defined using the Green functions. 
For details, see \cite{LZ22}. 
For other discussions on the Green function, 
see also \cite{FS91, Jon, Sumi06}. 

The following theorem gives a sufficient condition of random Julia sets to be totally disconnected almost surely. 
For the proof, see  \cite[Theorem 11]{LZ22}. 

\begin{theorem}\label{th:TypFastEscImpliesTotDisconn}
Let $\mu$ be a Borel probability measure on  $\CC$ with compact support. 
If the critical point $0$ is typically fast escaping, 
then the Julia set $J_{\omega}$ is totally disconnected for $\PP_{\mu}$-almost every $\omega \in \Omega_{\mu}$.
\end{theorem}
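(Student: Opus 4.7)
The plan is to show that for $\PP_\mu$-almost every $\omega$, every connected component of the filled Julia set $K_\omega$ reduces to a singleton. The idea, following Lech and Zdunik, is to carve $K_\omega$ using pullbacks of equipotentials of the non-autonomous Green functions $G_{\sigma^n\omega}$, exploiting that along $\PP_\mu$-typical sample paths the critical point $0$ escapes $D_R$ at a controlled rate under every shift $\sigma^n$.

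First, I would upgrade the typically fast escape hypothesis to a pointwise almost-sure bound along shifts. By shift-invariance of $\PP_\mu$, we have $\PP_\mu(k(0, \sigma^n\omega) > k) \leq e^{-\gamma k}$ for every $n, k \in \NN$. Setting $k_n := \lceil \tfrac{2}{\gamma}\log n\rceil$, the events $\{k(0, \sigma^n\omega) > k_n\}$ have summable probabilities, so the Borel--Cantelli lemma produces $N(\omega) \in \NN$ with $k(0, \sigma^n\omega) \leq \tfrac{2}{\gamma}\log n$ for every $n \geq N(\omega)$, for $\PP_\mu$-a.e.\ $\omega$. Fix such a typical $\omega$.

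Second, I would construct a refining family of Markov covers. Using the Green-function bound recalled after Definition \ref{def:escapinftime}, we obtain a polynomial lower bound $G_{\sigma^n\omega}(0) \geq C^{-1} n^{-2\log 2/\gamma}$ for all large $n$. Combined with the halving relation $G_{\sigma^{n+1}\omega}\circ f_{c_{n+1}} = 2\, G_{\sigma^n\omega}$, this lets me select potentials $t_n > 0$, also decaying polynomially, with $G_{\sigma^n\omega}(0) < t_n < 2\, G_{\sigma^{n-1}\omega}(0)$. Then $U_n := \{G_{\sigma^n\omega} < t_n\}$ is a topological disk containing both $K_{\sigma^n\omega}$ and the critical point $0$, while the critical value $c_{n+1} = f_{c_{n+1}}(0)$ lies outside $U_{n+1}$. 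Consequently $f_{c_{n+1}}^{-1}(U_{n+1}) = \{G_{\sigma^n\omega} < t_{n+1}/2\}$ consists of two disjoint topological disks, and iterating, $(\sample{\omega}{n})^{-1}(U_n) = \{G_\omega < 2^{-n} t_n\}$ is a disjoint union of $2^n$ topological disks covering $K_\omega$, refining with $n$.

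Third, I would verify that the diameters of these covers tend to zero. Because $2^{-n} t_n$ decays exponentially (despite $t_n$ only decaying polynomially), the enveloping sublevel set $\{G_\omega < 2^{-n} t_n\}$ shrinks toward $K_\omega$ exponentially fast. Applying the Koebe distortion theorem to the univalent inverse branches of $\sample{\omega}{n}$ defined on each $U_n$ --- a simply connected target of bounded eccentricity --- converts this exponential decay of Green-function levels into exponential shrinkage of the Euclidean diameters of the individual pullback disks. Every connected component of $K_\omega$ is then a single point, and therefore $J_\omega = \partial K_\omega$ is totally disconnected.

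The main obstacle is the uniform distortion control in the third step. The inverse branches of $\sample{\omega}{n}$ can be kept univalent on all of $U_n$ only because the critical values of $\sample{\omega}{n}$ --- the iterated images of $0$ under shifts --- have been arranged to lie outside the corresponding $U_k$ at every stage, and the pointwise escape-time control from the first step is exactly what certifies this. Making the bookkeeping precise, ensuring that the Koebe distortion constant does not blow up along the nested pullbacks, and confirming that the exponential decay of potentials dominates any residual distortion, together form the technical heart of the argument.
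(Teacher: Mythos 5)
First, a remark on the comparison itself: the paper does not prove Theorem \ref{th:TypFastEscImpliesTotDisconn}; it quotes the statement and refers to \cite[Theorem 11]{LZ22} for the proof. Your overall strategy --- use shift-invariance and Borel--Cantelli to turn the exponential tail bound into a pointwise polynomial lower bound on $G_{\sigma^{n}\omega}(0)$ for a.e.\ $\omega$, then collapse the components of $K_{\omega}$ by pulling back Green-function sublevel sets --- is indeed the strategy of the cited proof, and your first step is correct as stated.

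The gap is in your second step, the construction of the Markov covers. (a) The interval $\bigl(G_{\sigma^{n}\omega}(0),\,2G_{\sigma^{n-1}\omega}(0)\bigr)$ from which you draw $t_{n}$ can be empty: the hypothesis bounds each $G_{\sigma^{m}\omega}(0)$ from \emph{below} but not from above, and $G_{\sigma^{n}\omega}(0)$ and $G_{\sigma^{n-1}\omega}(0)=\tfrac{1}{2}G_{\sigma^{n}\omega}(c_{n})$ measure the escape of two different points ($0$ and $c_{n}$) under the same tail sequence $\sigma^{n}\omega$, so nothing prevents $G_{\sigma^{n}\omega}(0)\ge 2G_{\sigma^{n-1}\omega}(0)$ (fast escape of $0$, slow escape of $c_{n}$). (b) Even when such a $t_{n}$ exists, your conditions only exclude the critical value of the \emph{last} map from $U_{n}$. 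For $(\sample{\omega}{n})^{-1}(U_{n})$ to be $2^{n}$ disjoint, univalently mapped disks, \emph{all} critical values of $\sample{\omega}{n}$, namely $\sample{\sigma^{j}\omega}{n-j}(0)$ with Green value $2^{n-j}G_{\sigma^{j}\omega}(0)$ for $0\le j\le n-1$, must avoid $U_{n}$; this forces $t_{n}\le\min_{0\le j\le n-1}2^{n-j}G_{\sigma^{j}\omega}(0)$. (c) That necessary upper bound conflicts with the lower bound $t_{n}>G_{\sigma^{n}\omega}(0)$ you impose so that $U_{n}$ is a single topological disk containing $0$ --- the two requirements are incompatible precisely in the situation of (a). This is why the argument of Lech and Zdunik does not build globally univalent pullbacks: they fix a component, count how often the critical point enters the nested pullback pieces (which the escape-time control keeps under control), and conclude via moduli of separating annuli rather than Koebe distortion of univalent inverse branches. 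Your approach can be repaired along these lines --- for instance by dropping the requirement $0\in U_{n}$, taking $t_{n}$ comparable to $2G_{\sigma^{n-1}\omega}(0)$ (still polynomially small by your first step, so $2^{-n}t_{n}$ still decays exponentially), and working with the component of the sublevel set containing a given component of $K_{\sigma^{n}\omega}$ --- but as written the construction does not go through.
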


Using the theorem above the authors of \cite{LZ22} proved the following theorem. 

\begin{theorem}\label{th:LZ21}
Let $\mu$ be a Borel probability measure on  $\CC$ with compact support. 
If $\supp \mu \supset \B{0}{1/4}$ and $\supp \mu \neq \B{0}{1/4}$, 
then the random Julia set $J_{\omega}$ is totally disconnected for $\PP_{\mu}$-almost every $\omega \in \Omega_{\mu}$. 
\end{theorem}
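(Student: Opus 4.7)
The plan is to reduce to Theorem \ref{th:TypFastEscImpliesTotDisconn} by proving that the critical point $0$ is typically fast escaping under $\mu$. I will organize this into three stages: (a) show that the polynomial semigroup $G_\mu$ has no planar minimal set; (b) upgrade this to a uniform lower bound on the probability that a random composition of bounded length pushes any point of a large invariant disk $\overline{D_R}$ outside of it; (c) conclude exponential decay of the escape-time tail by a standard Markov block argument over the i.i.d.\ noise.

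The substantive content lies in (a). Suppose for contradiction that $L \subset \CC$ is a planar minimal set of $G_\mu$, let $M = \max_{z \in L}|z|$, and fix a maximizer $z_0 \in L$. Since $\B{0}{1/4} \subset \supp \mu$, forward invariance gives $\overline{B(z_0^2, 1/4)} = \{f_c(z_0) : c \in \B{0}{1/4}\} \subset L \subset \overline{D_M}$, so $|z_0|^2 + 1/4 \leq M$, and solving $M^2 + 1/4 \leq M$ forces $M = 1/2$. Because $|z_0^2| = 1/4$, the parameter $c = -z_0^2$ lies in $\B{0}{1/4} \subset \supp \mu$ and satisfies $f_c(z_0) = 0$, so $0 \in L$; applying each generator to $0$ then gives $\supp \mu \subset L$, and in particular $\overline{D_{1/4}} \subset L$. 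A routine induction along the recurrence $x_{n+1} = x_n^2 + 1/4$ with $x_0 = 1/4$ propagates $\overline{D_{x_n}} \subset L$, since $\{z^2 + c : |z| \leq x_n,\ |c| \leq 1/4\} = \overline{D_{x_n^2 + 1/4}}$; the sequence $x_n$ is strictly increasing with limit $1/2$, so closedness of $L$ gives $\overline{D_{1/2}} \subset L$. Finally choose $c^* \in \supp \mu$ with $|c^*| > 1/4$, which exists because $\supp \mu \neq \B{0}{1/4}$: then $f_{c^*}(\overline{D_{1/2}}) = \overline{B(c^*, 1/4)} \subset L$ contains a point of modulus $|c^*| + 1/4 > 1/2 = M$, contradicting the definition of $M$.

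For (b), fix $R$ as in Lemma \ref{lem:JAK}(1) and pick any $z \in \overline{D_R}$. Because $G_\mu$ has no planar minimal set, Zorn's lemma forces the orbit closure $\overline{G_\mu \cdot z}$ to be non-compact in $\CC$, so some finite composition of generators sends $z$ out of $D_R$. Joint continuity of the composition map in the parameters, together with the fact that every neighborhood of a support point has positive $\mu$-measure, upgrades this pointwise statement to a positive-$\mu^{n(z)}$-measure set of push-out tuples for an open neighborhood of $z$. Compactness of $\overline{D_R}$ then provides a uniform tuple length $N$ and a uniform lower bound $\eta > 0$ on $\mu^N$-measure, with shorter tuples safely padded up to length $N$ because Lemma \ref{lem:JAK}(1) forbids return to $\overline{D_R}$ once the orbit has left. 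Step (c) is then routine: partitioning the i.i.d.\ sequence into consecutive blocks of length $N$ yields $\PP_\mu(k(0, \omega) > kN) \leq (1-\eta)^k$, i.e.\ typical fast escape with $\gamma = -N^{-1}\log(1-\eta) > 0$, after which Theorem \ref{th:TypFastEscImpliesTotDisconn} concludes. The main obstacle I anticipate is step (a); the delicate point is that the forward $G_{0,1/4}$-invariance alone must bootstrap the minimal set all the way up to the full disk $\overline{D_{1/2}}$, after which a single extra parameter $c^*$ with $|c^*| > 1/4$ is enough to exceed the maximum modulus and produce the contradiction.
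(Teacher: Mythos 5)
Your proof is correct. Note that the paper itself does not prove Theorem \ref{th:LZ21}: it is imported from Lech and Zdunik \cite{LZ22}, so there is no internal argument to compare against. What you have written is a self-contained derivation inside the paper's own framework, and it holds up. Step (a) is the genuinely new content, and it is a clean maximum-modulus argument: from $\B{0}{1/4}\subset\supp\mu$ and forward invariance you correctly force $M=\max_{z\in L}|z|$ to satisfy $M^{2}+1/4\le M$, hence $M=1/2$; the observation that $c=-z_{0}^{2}$ lies in $\B{0}{1/4}$ and sends the maximizer to $0$ puts $0\in L$, hence $\supp\mu\subset L$; and the bootstrap along $x_{n+1}=x_{n}^{2}+1/4$ together with closedness of $L$ yields $\overline{D_{1/2}}\subset L$, after which any $c^{*}\in\supp\mu$ with $|c^{*}|>1/4$ (which exists by the strict-containment hypothesis) produces a point of modulus $|c^{*}|+1/4>M$ and the contradiction. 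This is essentially the same mechanism the paper uses for $G_{0,1/4}$ in Lemma \ref{lem:maincadioid}, pushed one step further by the extra support point; it correctly covers cases such as $\supp\mu=\B{0}{1/4}\cup\{c^{*}\}$ that are \emph{not} covered by Theorem \ref{th:general}, which needs $\B{0}{1/4}$ in the interior of the support. Steps (b) and (c) reprove, almost verbatim, the content of Theorem \ref{th:T0=1ImpliesTypFastEsc} and Corollary \ref{cor:cor} (compactness of $\overline{D_{R}}$, positive measure of parameter neighborhoods of support points, padding via Lemma \ref{lem:JAK}(1), and the i.i.d.\ block estimate $\PP_{\mu}(k(0,\omega)>kN)\le(1-\eta)^{k}$); you could shorten the write-up by invoking Corollary \ref{cor:cor} directly once (a) is established, since ``no planar minimal set'' is exactly its hypothesis. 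The one place to be slightly more explicit is the Zorn's lemma step in (b): you should say that if the $G_{\mu}$-orbit of $z$ were bounded, its closure would be a non-empty compact forward-invariant planar set, which by Zorn's lemma contains a planar minimal set, contradicting (a); as stated, ``non-compactness of the orbit closure in $\CC$'' is the conclusion of that argument rather than an application of Zorn's lemma per se. This is cosmetic, not a gap.
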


Consider the case where $\mu$ is the normalized Lebesgue measure on the disk $\B{c}{r}$. 
From above, the connectedness changes at $r = 1/4$ if $c = 0$. 
Since the value $1/4$ is the distance from $c = 0$ to the boundary of the Mandelbrot set, 
one may conjecture that the transition from connected to disconnected Julia sets equals the distance from $c$ to the boundary of the Mandelbrot set for any $c \in \Mandel$. 
However, Theorem \ref{th:LZ21} states that even if $\supp \mu$ is contained in the closure of the main cardioid of the Mandelbrot set, it can happen that the random Julia set $J_{\omega}$ is totally disconnected almost surely. 

Before giving the sufficient condition of typical fast escaping, we show the following lemma. 

\begin{lemma}\label{lem:EIsAlmostSurelyinvariant}
Let $\mu$ be a Borel probability measure on  $\CC$ with compact support. 
Define $E = \{z \in \rs \colon {T}_{\{\infty\}, \mu}(z) = 1\}$.  
If $z \in E$ and $k \in \NN$, then $\sample{\omega}{k}(z) \in E$ for $\PP_{\mu}$-almost every $\omega \in \Omega_{\mu}.$ 
\end{lemma}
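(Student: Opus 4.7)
The plan is to reduce the statement to a routine Fubini argument that exploits the i.i.d.\ structure of $\PP_{\mu}$ together with the co-cycle identity
\[
\sample{\omega}{n} = \sample{\sigma^{k}\omega}{\,n-k} \circ \sample{\omega}{k} \qquad (n > k).
\]
Because of this identity, for any $z \in \rs$ the event
\[
A(z) := \{\omega \in \Omega_{\mu} : d(\sample{\omega}{n}(z), \infty) \to 0\}
\]
depends on the first $k$ coordinates of $\omega$ only through the value $w(\omega) := \sample{\omega}{k}(z)$, and on the remaining coordinates only through $\sigma^{k}\omega$; more precisely, $\omega \in A(z)$ if and only if $\sigma^{k}\omega \in A(w(\omega))$.

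Under $\PP_{\mu}$, the first $k$ coordinates $(c_{1},\dots,c_{k})$ and the tail $\sigma^{k}\omega$ are independent and distributed as $\mu^{\otimes k}$ and $\PP_{\mu}$, respectively. Applying Fubini,
\[
1 \;=\; T_{\{\infty\},\mu}(z) \;=\; \PP_{\mu}(A(z)) \;=\; \int_{\supp\mu^{\otimes k}} \PP_{\mu}\bigl(A(w(c_{1},\dots,c_{k}))\bigr)\, d\mu^{\otimes k}(c_{1},\dots,c_{k}),
\]
where I have written $w(c_{1},\dots,c_{k}) = f_{c_{k}}\circ\dots\circ f_{c_{1}}(z)$. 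The inner probability is precisely $T_{\{\infty\},\mu}(w(c_{1},\dots,c_{k}))$, which takes values in $[0,1]$. Since the average of a $[0,1]$-valued function equals $1$ only when the function equals $1$ almost everywhere, we conclude
\[
T_{\{\infty\},\mu}\!\bigl(\sample{\omega}{k}(z)\bigr) = 1 \quad \text{for } \PP_{\mu}\text{-a.e. } \omega,
\]
which is exactly the statement that $\sample{\omega}{k}(z) \in E$ almost surely.

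The only point to handle with care is the measurability of the map $\omega \mapsto T_{\{\infty\},\mu}(\sample{\omega}{k}(z))$, so that Fubini legitimately applies. Measurability of $\omega \mapsto \sample{\omega}{k}(z)$ follows from the joint continuity of $(\omega,z)\mapsto \sample{\omega}{n}(z)$ noted after Definition \ref{def:JuliaFatouG}, while measurability of $T_{\{\infty\},\mu}$ on $\rs$ is a standard consequence of writing it as $\PP_{\mu}$-integral of the indicator of the Borel set $\{(\omega,w):\sample{\omega}{n}(w)\to\infty\}$ and applying Fubini once more. I do not foresee a real obstacle: the proof is essentially a clean application of independence, Fubini, and the cocycle identity.
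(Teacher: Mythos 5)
Your proposal is correct and follows essentially the same route as the paper: the paper likewise uses Fubini and the product structure of $\PP_{\mu}$ to obtain $T_{\{\infty\},\mu}(z)=\int_{\Omega_{\mu}}T_{\{\infty\},\mu}(\sample{\omega}{k}(z))\,\mathrm{d}\PP_{\mu}(\omega)$ (by induction from the one-step identity) and then concludes from the average of a $[0,1]$-valued function being $1$. Your added remark on measurability is a minor point the paper leaves implicit.
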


\begin{proof}
Since $\PP_{\mu}$ is the product of the probability measure $\mu$, 
Fubini's theorem implies $$T_{\{\infty\}, \mu}(z) = \int_{\B{c}{r}} T_{\{\infty\}, \mu}(f_{c_{1}} (z)) \mathrm{d} \mu (c_{1})$$ 
for every $z \in \rs.$ 
By induction, we have 
$T_{\{\infty\}, \mu}(z) = \int_{\Omega} T_{\{\infty\}, \mu}(\sample{\omega}{k} (z)) \mathrm{d}\PP_{\mu} (\omega).$ 
Thus, if $T_{\{\infty\}, \mu}(z) = 1$, then $T_{\{\infty\}, \mu}(\sample{\omega}{k} (z)) = 1$ with probability one. 
\end{proof}

The following is the new and original result that gives a sufficient condition for typical fast escaping. 

\begin{theorem}\label{th:T0=1ImpliesTypFastEsc}
Let $\mu$ be a Borel probability measure on  $\CC$ with compact support. 
Suppose that $T_{\{\infty\}, \mu}$ is continuous. 
If $T_{\{\infty\}, \mu}(0) = 1,$   
then the critical point $0$ is typically fast escaping, 
and hence the random Julia set $J_{\omega}$ is totally disconnected for $\PP_{\mu}$-almost every $\omega \in \Omega_{\mu}$. 
\end{theorem}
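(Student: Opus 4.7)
The plan is to promote the hypothesis $T_{\{\infty\},\mu}(0)=1$ to a uniform exponential tail bound on the escape time $k(0,\omega)$ by running a Markov/cocycle-type iteration, and then invoke Theorem \ref{th:TypFastEscImpliesTotDisconn}. Fix $R$ as in Lemma \ref{lem:JAK}(1), set $B_k:=\{\omega:|\sample{\omega}{k}(0)|\le R\}=\{k(0,\omega)>k\}$, and for every $z\in\rs$ define
\begin{equation*}
\phi_k(z) := \PP_\mu\bigl(\{\omega\in\Omega_\mu:|\sample{\omega}{j}(z)|\le R\text{ for all }j\le k\}\bigr).
\end{equation*}
Then $\PP_\mu(B_k)=\phi_k(0)$. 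For each fixed $\omega$ the condition defining the integrand is closed in $z$, so reverse Fatou makes $\phi_k$ upper semicontinuous, and monotone convergence gives $\phi_k\downarrow 1-T_{\{\infty\},\mu}$ pointwise on $\rs$.

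The key geometric step is to show that every point the random orbit of $0$ can reach lies in the closed set $E:=\{z\in\rs:T_{\{\infty\},\mu}(z)=1\}$. For each $n\in\NN$, Lemma \ref{lem:EIsAlmostSurelyinvariant} applied to $z=0$ gives that $\{\omega:\sample{\omega}{n}(0)\in E\}$ has full $\PP_\mu$-measure. Since $\omega\mapsto\sample{\omega}{n}(0)$ is continuous on the compact product $\Omega_\mu$ and $E$ is closed, this event is itself closed; combined with $\supp\PP_\mu=\Omega_\mu$ (which follows from the product construction in Setting \ref{setting}), this forces the event to coincide with $\Omega_\mu$. Hence $\sample{\omega}{n}(0)\in E$ for \emph{every} $\omega\in\Omega_\mu$ and every $n\in\NN$, and the closure $\mathcal{R}$ of all such positions satisfies $\mathcal{R}\subset E$.

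With $\mathcal{R}\subset E$ secured, the rest is essentially routine. On $E$ we have $\phi_k\downarrow 0$; upper semicontinuity of $\phi_k$ together with the compactness of $E\subset\rs$ yields some $N\in\NN$ with $\phi_N\le 1/2$ throughout $E$. Splitting $\omega$ into its first $j$ coordinates and the shifted tail $\sigma^j\omega$, using independence and the cocycle identity $\sample{\omega}{j+N}(0)=\sample{\sigma^j\omega}{N}(\sample{\omega}{j}(0))$, I get
\begin{equation*}
\PP_\mu(B_{j+N})=\int_{B_j}\phi_N\bigl(\sample{\omega}{j}(0)\bigr)\,d\PP_\mu(\omega)\le \tfrac{1}{2}\,\PP_\mu(B_j),
\end{equation*}
because $\sample{\omega}{j}(0)\in\mathcal{R}\subset E$ for every $\omega\in\Omega_\mu$. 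Iterating gives $\PP_\mu(B_{\ell N})\le 2^{-\ell}$, i.e.\ the desired exponential decay of $\PP_\mu(\{k(0,\omega)>k\})$, so $0$ is typically fast escaping in the sense of Definition \ref{def:escapinftime}; the total disconnectedness then follows from Theorem \ref{th:TypFastEscImpliesTotDisconn}.

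The step I expect to be most delicate is the inclusion $\mathcal{R}\subset E$: Lemma \ref{lem:EIsAlmostSurelyinvariant} only provides a null-set-free statement for each fixed $n$, and one must combine continuity of $\omega\mapsto\sample{\omega}{n}(0)$, closedness of $E$, and $\supp\PP_\mu=\Omega_\mu$ to upgrade it to ``every $\omega$ and every $n$''. That upgrade is precisely what licenses the pointwise bound $\phi_N(\sample{\omega}{j}(0))\le 1/2$ inside the integral; once it is in place, the cocycle estimate and the resulting exponential decay are standard.
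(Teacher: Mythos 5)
Your proof is correct, and its overall architecture coincides with the paper's: identify the compact set $E=\{T_{\{\infty\},\mu}=1\}$ (compactness is the only place continuity is used, in both arguments), show that the whole random orbit of $0$ stays in $E$, obtain a uniform bound $<1$ on the probability of not escaping $D_R$ within some fixed $N$ steps from any point of $E$, and iterate via independence/Fubini to get geometric decay of $\PP_\mu(\{k(0,\omega)>k\})$. The difference lies in how the uniform bound is produced. The paper argues by hand: for each $z\in E$ it picks a finite word in $\supp\mu$ driving $z$ out of $D_R$, thickens it to neighborhoods $O_z$ in the dynamical plane and cylinder sets of positive measure in parameter space, and extracts a finite subcover to get a uniform $\alpha>0$ with $\PP_\mu(\mathfrak S^N(z))\le 1-\alpha$ on $E$. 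You instead introduce the non-escape probabilities $\phi_k$, establish their upper semicontinuity by reverse Fatou and the decrease $\phi_k\downarrow 1-T_{\{\infty\},\mu}$, and apply a Dini-type argument for u.s.c.\ functions on the compact set $E$; this is softer and avoids the explicit cylinder-set construction, at the cost of the semicontinuity verification. A second, smaller divergence: the paper only needs the almost-sure invariance of $E$ from Lemma \ref{lem:EIsAlmostSurelyinvariant} (the bound is used inside an integral), whereas you upgrade it to invariance for \emph{every} $\omega$ via closedness of $E$ and $\supp\PP_\mu=\Omega_\mu$; this upgrade is valid but not actually necessary for your estimate. Finally, your last step (converting $\PP_\mu(B_{\ell N})\le 2^{-\ell}$ into $\le e^{-\gamma k}$ for \emph{all} $k\in\NN$) is exactly as loose as the paper's own "we can find a constant $\gamma>0$", so no additional gap is introduced there.
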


\begin{proof}
The latter statement is a consequence of Theorem \ref{th:TypFastEscImpliesTotDisconn}. 
Thus, it suffices to show that the critical point $0$ is typically fast escaping. 
Let  $R > 0$ be a large number as in Lemma \ref{lem:JAK} (1). 
Set $E = \{z \in \rs \colon {T}_{\{\infty\}, \mu}(z) = 1\}$,  
then $E$ is compact since we assume that $T_{\{\infty\}, \mu}$ is continuous. 
(This is the only place where the assumption of continuity is used.) 

For every $z \in E$, there exist $n_{z} \in \NN$ and $c^{z}_{1}, c^{z}_{2}, \dots c^{z}_{n_{z}} \in \supp \mu$ such that 
 $|\random{c^{z}_{n_{z}}}{c^{z}_{2}}{c^{z}_{1}} (z)| > R$. 
 Since this map $\random{c^{z}_{n_{z}}}{c^{z}_{2}}{c^{z}_{1}} $ is continuous, 
 there exists an open neighborhood $O_{z}$ of $z$ such that for every $z' \in O_{z}$, we have 
 $|\random{c^{z}_{n_{z}}}{c^{z}_{2}}{c^{z}_{1}} (z')| > R$. 
We may and do assume that $O_{z}$ is precompact. 

We now consider the open covering $\{O_{z}\}_{z \in E}$ of $E$. 
Since $E$ is compact, there exists a finite subcover. 
Namely, there exist $\ell \in \NN$ and $\ell$ open sets $O_{j}$ $(j =1, \dots, \ell)$ such that 
$E \subset \bigcup_{j=1}^{\ell} O_{j}.$ 
Moreover, for each $j =1, \dots, \ell$, there exist 
 $n_{j} \in \NN$ and $c^{j}_{1}, c^{j}_{2}, \dots c^{j}_{n_{z}} \in \supp \mu$ such that 
 for every $z' \in O_{j}$, we have
 $|\random{c^{j}_{n_{j}}}{c^{j}_{2}}{c^{j}_{1}} (z')| > R$. 
Let $N = \max_{j =1, \dots, \ell} n_{j}$. 
By Lemma \ref{lem:JAK} (1), we can assume that $n_{j} = N$ for every $j=1, \dots, \ell$. 
Since $\CC \ni c \mapsto f_{c} \in \Poly$ is continuous, 
for every $n =1, 2, \dots , N$, there exists an open neighborhood $U^{j}_{n}$ of $c^{j}_{n}$ such that 
for every $c_{n} \in U^{j}_{n}$ and $z' \in O_{j}$, we have  $|\random{c_{N}}{c_{2}}{c_{1}} (z')| > R$. 

Define $\mathfrak{S}^{k}(z) = \{ \omega \in \Omega_{\mu} \colon |\sample{\omega}{k}(z) | \leq R \}$ 
for every $z \in \rs$. 
Then, by the construction above, we have 
$\PP_{\mu} (\Omega_{\mu} \setminus \mathfrak{S}^{N}(z)) \geq \PP_{\mu} (\prod_{n=1}^{N} U^{j}_{n} \times \prod_{N+1}^{\infty} \B{c}{r})$ 
for every $j =1, \dots, \ell$ and  $z \in O_{j}$. 
Since $c^{j}_{n} \in \supp \mu$ and $U^{j}_{n}$ is its open neighborhood, 
we have $\mu(U^{j}_{n}) > 0$ for every $j =1, \dots, \ell$ and $n =1, 2, \dots , N$. 
Define $\alpha = \min_{j =1, \dots, \ell} \prod_{n=1}^{N} \mu(U^{j}_{n}) > 0.$ 
Then, for every $z \in E$, we have $\PP_{\mu} (\mathfrak{S}^{N}(z)) \leq 1-\alpha.$ 

It follows from Fubini's theorem that 
$$\mathbb{P}_{\mu} (\mathfrak{S}^{k + N}(z)) 
= \int_{\Omega_{\mu}} \mathbb{P}_{\mu} (\mathfrak{S}^{N}(\sample{\omega}{k}(z))) \mathrm{d} \mathbb{P}_{\mu}(\omega)$$ for every $k \in \NN.$ 
If $\omega \not\in \mathfrak{S}^{k}(z)$, then $\mathfrak{S}^{N}(\sample{\omega}{k}(z)) = \emptyset$ by Lemma \ref{lem:JAK}. 
Thus, the integral on the complement of $\mathfrak{S}^{k}(z)$ is zero; 
$\PP_{\mu} (\mathfrak{S}^{k + N}(z))  
 =  \int_{\mathfrak{S}^{k}(z)} \PP_{\mu} (\mathfrak{S}^{N}(\sample{\omega}{k}(z))) \mathrm{d}\PP_{\mu}(\omega)$. 
By Lemma \ref{lem:EIsAlmostSurelyinvariant}, 
we have $\PP_{\mu} (\mathfrak{S}^{k + N}(z)) \leq (1 - \alpha) \PP_{\mu} (\mathfrak{S}^{k}(z) ) $ 
if $z \in E$. 

Repeating this, we have 
$\PP_{\mu} ( \mathfrak{S}^{mN}(0)) \leq (1 - \alpha)^{m}  $
for every $m \in \NN.$ 
Thus, we can find a constant $\gamma > 0$ such that 
 $$\mathbb{P}_{\mu}(\{\omega \in \Omega_{\mu} \colon k(0, \omega) > k\})\leq \mathbb{P}_{\mu} ( \mathfrak{S}^{k}(0))  \leq e^{{-\gamma k}}$$ for every  $k \in \NN$. 
 This completes the proof. 
\end{proof}

\begin{cor}\label{cor:T0=1ImpliesTypFastEsc}
Let $\mu$ be a Borel probability measure on  $\CC$ whose support is compact and satisfies $\mathrm{int} (\supp \mu) \neq \emptyset$. 
If $T_{\{\infty\}, \mu}(0) = 1,$   
then the random Julia set $J_{\omega}$ is totally disconnected for $\PP_{\mu}$-almost every $\omega \in \Omega_{\mu}$. 
\end{cor}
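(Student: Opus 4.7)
The plan is to reduce the corollary to a direct combination of the preceding three results, with no new ingredients needed. The only hypothesis in Theorem \ref{th:T0=1ImpliesTypFastEsc} beyond $T_{\{\infty\},\mu}(0)=1$ is the continuity of $T_{\{\infty\},\mu}$ on $\widehat{\CC}$, so the entire task is to derive that continuity from the assumption $\mathrm{int}(\supp\mu)\neq\emptyset$.

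First I would invoke Lemma \ref{lem:kernelEmpty}: since $\mathrm{int}(\supp\mu)\neq\emptyset$, the kernel Julia set satisfies $J_{\mathrm{ker}}(G_\mu)=\emptyset$. Next I would apply Theorem \ref{th:Sumi} to this polynomial semigroup $G_\mu$; the conclusion of that theorem gives precisely that $T_{\{\infty\},\mu}$ is continuous on $\widehat{\CC}$. With continuity in hand, and with the hypothesis $T_{\{\infty\},\mu}(0)=1$ carried directly from the corollary, both hypotheses of Theorem \ref{th:T0=1ImpliesTypFastEsc} are met, and I would cite that theorem to conclude that $J_\omega$ is totally disconnected for $\PP_\mu$-almost every $\omega\in\Omega_\mu$.

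There is essentially no obstacle here; the corollary is a packaging of the preceding machinery. The only thing worth checking carefully is that $G_\mu$ in Lemma \ref{lem:kernelEmpty} and Theorem \ref{th:Sumi} is the same polynomial semigroup generated by $\{f_c : c\in\supp\mu\}$ that appears implicitly when invoking Theorem \ref{th:T0=1ImpliesTypFastEsc}, which is immediate from the definitions given in Subsection \ref{ssec:MS}. Thus the proof fits comfortably in two or three lines and needs no new estimates, no new dynamical arguments, and no additional hypothesis on $\mu$ beyond what is already assumed.
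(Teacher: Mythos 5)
Your proposal is correct and is exactly the paper's argument: Lemma \ref{lem:kernelEmpty} yields $J_{\mathrm{ker}}(G_{\mu})=\emptyset$, Theorem \ref{th:Sumi} then gives continuity of $T_{\{\infty\},\mu}$, and Theorem \ref{th:T0=1ImpliesTypFastEsc} finishes the proof. No differences worth noting.
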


\begin{proof}
This is a consequence of Lemma \ref{lem:kernelEmpty} and Theorems \ref{th:Sumi} and \ref{th:T0=1ImpliesTypFastEsc}. 
\end{proof}

\begin{cor}\label{cor:cor}
Let $\mu$ be a Borel probability measure on  $\CC$ whose support is compact. 
If    $G_{\mu}$ has no planar minimal sets, 
then the random Julia set $J_{\omega}$ is totally disconnected for $\PP_{\mu}$-almost every $\omega \in \Omega_{\mu}$. 
\end{cor}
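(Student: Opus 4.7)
The plan is to reduce to Theorem \ref{th:T0=1ImpliesTypFastEsc} by verifying its two hypotheses: continuity of $T_{\{\infty\},\mu}$ and the identity $T_{\{\infty\},\mu}(0)=1$. Both will follow from Theorem \ref{th:Sumi} once one shows that the no-planar-minimal-sets assumption forces $J_{\mathrm{ker}}(G_\mu)=\emptyset$.

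First I would establish that $J_{\mathrm{ker}}(G_\mu)$ is compact, planar, and forward $G_\mu$-invariant. Planarity and boundedness come from the inclusion $J_{\mathrm{ker}}(G_\mu)\subset J(G_\mu)\subset\overline{D_R}$, where $R$ is the escape radius from Lemma \ref{lem:JAK}(1) associated with the compact set $\supp\mu$; closedness follows from the definition as an intersection of closed sets $g^{-1}(J(G_\mu))$. Forward invariance is routine: for $z\in J_{\mathrm{ker}}(G_\mu)$, any $g_0\in G_\mu$, and any $g\in G_\mu$, the composition $g\circ g_0$ again lies in $G_\mu$, so $g(g_0(z))\in J(G_\mu)$, whence $g_0(z)\in J_{\mathrm{ker}}(G_\mu)$.

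Next I would derive the contradiction. If $J_{\mathrm{ker}}(G_\mu)$ were non-empty, Zorn's lemma applied to the collection of non-empty compact forward $G_\mu$-invariant subsets of $J_{\mathrm{ker}}(G_\mu)$ (ordered by reverse inclusion) would yield a minimal element $L$. Any non-empty compact forward invariant $L'\subset L$ would lie in the same collection and hence coincide with $L$, so $L$ is a minimal set of $G_\mu$ in the sense of Definition \ref{def:minimal}. Since $L\subset J_{\mathrm{ker}}(G_\mu)$ is planar, this contradicts the standing hypothesis. Therefore $J_{\mathrm{ker}}(G_\mu)=\emptyset$.

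By Theorem \ref{th:Sumi}, $T_{\{\infty\},\mu}$ is then continuous on $\rs$ and $\sum_L T_{L,\mu}(z)=1$ for all $z\in\rs$, with the sum running over the attracting minimal sets. The hypothesis rules out planar minimal sets, and any minimal set containing $\infty$ must equal $\{\infty\}$ by forward invariance of $\{\infty\}$; so $\{\infty\}$ is the unique minimal set, and $T_{\{\infty\},\mu}\equiv 1$. In particular $T_{\{\infty\},\mu}(0)=1$, and Theorem \ref{th:T0=1ImpliesTypFastEsc} gives that $0$ is typically fast escaping and that $J_\omega$ is totally disconnected for $\PP_\mu$-a.e.\ $\omega$. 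The only substantive step is the Zorn argument bridging the emptiness of $J_{\mathrm{ker}}(G_\mu)$ to the absence of planar minimal sets; the rest merely assembles previously stated results.
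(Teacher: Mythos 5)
Your proposal is correct and follows essentially the same route as the paper: reduce to Theorem \ref{th:T0=1ImpliesTypFastEsc} by showing $T_{\{\infty\},\mu}\equiv 1$ via Theorem \ref{th:Sumi}. The only difference is that the paper's one-line proof silently assumes the hypothesis $J_{\mathrm{ker}}(G_\mu)=\emptyset$ of Theorem \ref{th:Sumi} (noting merely that a constant function is continuous), whereas you justify it explicitly with the Zorn argument producing a planar minimal set inside a non-empty kernel Julia set — a worthwhile detail, and your verification is sound.
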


\begin{proof}
Since $G_{\mu}$ has no planar minimal sets, the function satisfies   $T_{\{\infty\}, \mu} \equiv 1$   
by Theorem \ref{th:Sumi}. 
Since the constant function is continuous, Theorem \ref{th:T0=1ImpliesTypFastEsc} implies the conclusion. 
\end{proof}

For instance,  
let $\mu = \delta_{0}/2 + \delta_{c}/2$ be the convex combination of the Dirac measures at $0$ and $c$ with $|c| > 1.$ 
If $|z| > 1$, then $\iteration{f}{0}{n}(z) \to \infty$ as $n \to \infty.$ 
If $|z| \leq 1$, then $|f_{c}(z)| \geq -|z^{2}| + |c| > 1$, and hence $\iteration{f}{0}{n} \circ f_{c}(z) \to \infty$ as $n \to \infty.$ 
This shows that $G_{\mu}$ has no planar minimal sets, 
which implies that the random Julia set is totally disconnected $\PP_{\mu}$-almost surely. 

In the next section, we apply Corollary \ref{cor:T0=1ImpliesTypFastEsc} to the case where $\mu$ is the normalized Lebesgue measure on the disk $\B{c}{r}$.

\section{Main results on uniform noise process}\label{sec:MainRes}
Throughout this section, we consider the case where $\mu$ is the normalized Lebesgue measure on the disk $\B{c}{r}$ as in Setting \ref{settingUniform}.

\begin{setting}\label{settingUniform} 
For fixed $c \in \CC$ and $r \geq 0$, 
we define the following Borel probability measure $\mu_{c, r}$ on $\CC$ with compact support. 
If $r > 0$, define $\mu_{c, r}$ as the normalized Lebesgue measure on $\B{c}{r} =\{c' \in \mathbb{C} \colon |c' - c| \leq r \}$. 
If $r = 0$, define $\mu_{c, r}$ as the Dirac measure at $c$. 
We define the probability space $(\Omega_{c, r}, \ \PP_{c, r}) = (\Omega_{\mu_{c, r}}, \ \PP_{\mu_{c, r}})$ as in Setting \ref{setting}. 
Namely, denote by $\mathbb{P}_{c, r} $  the one-sided infinite product of  $\mu_{c, r}$ 
supported on $\Omega_{c, r} = \prod_{n = 1}^{\infty} \B{c}{r}$ with the Borel $\sigma$-algebra. 
Besides, let the polynomial semigroup $G_{c, r}$ be defined  
by the semigroup generated by $\{f_{c'} \in \Poly \colon c' \in \B{c}{r}\}$ 
and define the function $T_{c, r} \colon \rs \to [0, 1]$, abusing notation, by $$T_{c, r}(z) = \mathbb{P}_{c, r} (\{ \omega \in \Omega_{c, r} \colon \sample{\omega}{n} (z) \to \infty \, (n \to \infty) \} ).$$ 
\end{setting}

\begin{remark}
Setting \ref{settingUniform} can be interpreted as an additive-noise stochastic process in the following way. 
Fix $c \in \CC$ and $r \geq 0$. 
On a probability space $(\Omega,\ \mathfrak{B}, \ \PP)$, take independent random variables $\nu_{n} \colon \Omega \to \CC$ that follow the uniform distribution on $\B{0}{r}$ for all $n \in \NN$. 
Fix  $z_{0} \in \CC$ and define the random variable $Z_{0} \colon \Omega \to \CC$ by a constant map with value $z_{0}$. 
Define the $\CC$-valued stochastic process $\{Z_{n}\}_{n \geq 0}$ on $(\Omega,\ \mathfrak{B}, \ \PP)$,  
inductively by $Z_{n+1} =f_{c}(Z_{n}) + \nu_{n+1}$ for every $n \geq 0$. 
Then the distribution of $Z_{n}$ are identical to 
that of the random variable $\sample{\bullet}{n} (z_{0}) \colon \Omega_{c,r} \to \CC$ for every $n \in \NN$, 
where $\sample{\bullet}{n} (z_{0})$ is the map $\omega \mapsto \sample{\omega}{n} (z_{0}).$ 
This stochastic process describes the process driven by adding uniform noise independently to the deterministic dynamical systems of $f_{c}$ with the initial value $z_{0}$. 
Here, the parameter $r$ describes the size of noise that satisfies $| \nu_{n} | \leq r$ almost surely for every $n \in \NN$. 
\end{remark}

We investigate the bifurcation radius $\rbif(c)$ defined in Definition \ref{def:bifRad}. 
Our final goal is to determine $\rbif(c)$ and we provide some partial results in this paper. 

\subsection{Relation between bifurcation and connectedness}
In this subsection, we reveal the relation between bifurcation and connectedness. 
We start the discussion with the following corollary. 

\begin{cor}\label{cor:randomJuliaSetIsTotDisconn}
Suppose that $r > \rbif(c)$. 
Then the random Julia set $J_{\omega}$ is totally disconnected for $\PP_{c, r}$-almost every $\omega$. 
\end{cor}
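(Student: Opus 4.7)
The plan is that this corollary is essentially immediate from the machinery already developed, so the main task is to identify the correct pieces to assemble rather than to do new work.

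First I would observe that the hypothesis $r > \rbif(c) \geq 0$ forces $r > 0$, so $\mu_{c,r}$ is genuinely the normalized Lebesgue measure on $\B{c}{r}$, which is a Borel probability measure with compact support. Next I would unpack what $r > \rbif(c)$ says about the semigroup $G_{c,r}$ using Theorem \ref{th:listOfMinimalSets} and Definition \ref{def:bifRad}: in the case $c \notin \mathcal{H}$, the theorem directly gives that $G_{c,r}$ has no planar minimal sets for every $r > 0$; in the case $c \in \mathcal{H}$, the last bullet of Theorem \ref{th:listOfMinimalSets} gives that $r > \rbif(c)$ again implies $G_{c,r}$ has no planar minimal sets. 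Either way, $G_{c,r}$ admits no planar minimal set.

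With this in hand I would simply invoke Corollary \ref{cor:cor} with $\mu = \mu_{c,r}$, which states precisely that if $G_\mu$ has no planar minimal sets then $J_\omega$ is totally disconnected for $\PP_\mu$-almost every $\omega$. This yields the conclusion.

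The only potential obstacle is making sure the logical reading of Definition \ref{def:bifRad} is applied correctly when $c \notin \mathcal{H}$ (so that $\rbif(c) = 0$ and the hypothesis $r > \rbif(c)$ reduces to $r > 0$), but this is handled by the first bullet of Theorem \ref{th:listOfMinimalSets}. There is no further work to do: the actual analytic content --- that the absence of planar minimal sets forces $T_{\{\infty\},\mu} \equiv 1$, hence typical fast escape of the critical point via Theorem \ref{th:T0=1ImpliesTypFastEsc}, hence total disconnectedness almost surely via Theorem \ref{th:TypFastEscImpliesTotDisconn} --- is already packaged into Corollary \ref{cor:cor}.
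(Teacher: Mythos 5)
Your proposal is correct and follows essentially the same route as the paper: the paper deduces $T_{c,r}\equiv 1$ from Theorems \ref{th:listOfMinimalSets} and \ref{th:Sumi} and then applies Corollary \ref{cor:T0=1ImpliesTypFastEsc}, while you package the identical chain by citing Corollary \ref{cor:cor}, whose proof is exactly that argument. Your extra remarks on the case $c\notin\mathcal{H}$ and on $r>0$ are harmless clarifications, not a different method.
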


\begin{proof}
If $r > \rbif(c),$ then  $T_{c, r} \equiv 1$  
by Theorems \ref{th:listOfMinimalSets} and \ref{th:Sumi}. 
It follows from Corollary \ref{cor:T0=1ImpliesTypFastEsc}  
that the random Julia set $J_{\omega}$ is totally disconnected almost surely. 
\end{proof}

In the following, suppose that the interior of the support $\B{c}{r}$ contains a superattracting parameter. 

\begin{lemma}\label{lem:cond}
Suppose that the interior of $ \B{c}{r}$ contains a superattracting parameter $\tilde{c}$. 
If there exists $\omega \in \Omega_{c, r}$ such that $\sample{\omega}{n} (0) \to \infty$ as $n \to \infty$, 
then  $r > r_\mathrm{bif}(c)$. 
\end{lemma}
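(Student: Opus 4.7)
The plan is to prove the contrapositive: assume $r \le \rbif(c)$ and derive a contradiction with the existence of an $\omega \in \Omega_{c,r}$ whose orbit of $0$ escapes. If $c \notin \mathcal{H}$, then $\rbif(c) = 0$ by definition, while the hypothesis ``interior of $\B{c}{r}$ non-empty'' forces $r > 0$; so $r \le \rbif(c)$ is impossible and there is nothing to prove. I may thus assume $c \in \mathcal{H}$, and by Theorem \ref{th:listOfMinimalSets} the assumption $r \le \rbif(c)$ furnishes a (unique) planar minimal set $L$ of $G_{c,r}$. My strategy is to show $0 \in L$: this will immediately give $\sample{\omega}{n}(0) \in L$ for every $\omega$ and every $n$ by forward $G_{c,r}$-invariance of $L$, contradicting the escape of some orbit.

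To locate $0$ inside $L$, I first use that $f_{\tilde c} \in G_{c,r}$ (because $\tilde c \in \B{c}{r}$), so $L$ is forward $f_{\tilde c}$-invariant and bounded; hence $L \subset K(f_{\tilde c})$. I then split on whether $L \subset J(f_{\tilde c})$. In the ``easy'' case, where some $z_0 \in L$ lies in the Fatou set of $f_{\tilde c}$, the inclusion $L \subset K(f_{\tilde c})$ puts $z_0$ in a bounded Fatou component of $f_{\tilde c}$. Since $\tilde c$ is superattracting and therefore hyperbolic, every bounded Fatou component is contained in the basin of the superattracting cycle $\{0, f_{\tilde c}(0), \dots, f_{\tilde c}^{\circ(p-1)}(0)\}$, so $f_{\tilde c}^{\circ n}(z_0)$ converges to that cycle. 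Closedness and $f_{\tilde c}$-invariance of $L$ then force the whole cycle, and in particular $0$, to lie in $L$, as desired.

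The remaining case $L \subset J(f_{\tilde c})$ is the step I expect to require the most care. Here I plan to exploit the full $G_{c,r}$-invariance of $L$ together with the fact that $J(f_{\tilde c})$ is nowhere dense (a consequence of $\tilde c$ being hyperbolic). For any $z_0 \in L$, the set $\{f_{c'}(z_0) : c' \in \B{c}{r}\}$ equals the closed disk $\B{z_0^2 + c}{r}$, which has non-empty interior precisely because $r > 0$, i.e.\ precisely because the interior of $\B{c}{r}$ is non-empty. This disk would have to sit inside $L \subset J(f_{\tilde c})$, contradicting nowhere-denseness of $J(f_{\tilde c})$ and ruling out this case. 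Combining the two cases yields $0 \in L$, producing the contradiction with escape and completing the argument.
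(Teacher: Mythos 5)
Your proposal is correct. You argue by contraposition: assuming $r \leq \rbif(c)$, Theorem \ref{th:listOfMinimalSets} (after disposing of the vacuous case $c \notin \mathcal{H}$, where $\rbif(c)=0<r$) supplies a planar minimal set $L$, and you show $0 \in L$, so that forward $G_{c,r}$-invariance traps every random orbit of $0$ in the bounded set $L$. The paper instead argues directly: from the single escaping sequence $\omega$ it manufactures, for \emph{every} $z \in \rs$, a parameter sequence driving $z$ to $\infty$ (splitting on $z \notin K(f_{\tilde c})$, $z \in \partial K(f_{\tilde c})$, $z \in \mathrm{int}\,K(f_{\tilde c})$), which shows $G_{c,r}$ has no planar minimal set at all. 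The two arguments use the same two ingredients --- convergence to the superattracting cycle inside $\mathrm{int}\,K(f_{\tilde c})$ via the No Wandering Domain Theorem, and the fact that $\{f_{c'}(z) : c' \in \B{c}{r}\} = \B{z^{2}+c}{r}$ has nonempty interior so cannot sit inside the nowhere dense set $J(f_{\tilde c})$ --- but deploy them dually: your dichotomy is on whether $L$ meets $F(f_{\tilde c})$, the paper's is on the position of an arbitrary initial point relative to $K(f_{\tilde c})$. Your route has the advantage of never needing the escaping sequence $\omega$ until the final contradiction, and it establishes en passant that $0$ belongs to the planar minimal set whenever one exists, which is essentially the content of Lemma \ref{lem:boundedattractor} and the implication $(1)\Rightarrow(2)$ of Theorem \ref{th:BififfTotDisconn}; the paper's route yields the slightly stronger intermediate conclusion that every point of $\rs$ can be driven to infinity. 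Both are valid; the only cosmetic remark is that the nowhere-density of $J(f_{\tilde c})$ needs no appeal to hyperbolicity, since the Julia set of any polynomial of degree at least two has empty interior.
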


\begin{proof}
We show that for every $z \in \rs$, there exist $c_{1}, c_{2} , \dots \in \B{c}{r}$ such that $f_{c_{n}} \circ \dots  \circ f_{c_{2}} \circ f_{c_{1}} (z) \to \infty$, 
which implies that $G_{c, r}$ has no planar minimal sets. 

The proof is divided into three cases. 
Denote by $K(f_{\tilde{c}})$ the autonomous filled Julia set of the superattracting map $f_{\tilde{c}}.$ 
If $z \not\in K(f_{\tilde{c}})$, then  $ f_{\tilde{c}}^{\circ n} (z) \to \infty$ by definition.  
If $z \in \partial K(f_{\tilde{c}})$, then there exists $c_{1} \in \B{c}{r}$ such that $f_{c_{1}}(z) \not\in K(f_{\tilde{c}})$ and hence $ f_{\tilde{c}}^{\circ n} \circ f_{c_{1}} (z) \to \infty$. 
 If $z \in \mathrm{int }K(f_{\tilde{c}})$, then by the No Wandering Domain Theorem, 
the orbit $ f_{\tilde{c}}^{\circ n} (z) $ converges to the attracting cycle of $0$.  
By the assumption, we have $0 \in A_{\omega}.$ 
Since the non-autonomous basin $A_{\omega}$ is open,
 there exists an open neighborhood $D$ of $0$ such that for every $z' \in D$, 
 we have $\sample{\omega}{n} (z') \to \infty$ as $n \to \infty$.  
Take $N \in \NN$ so that $ f_{\tilde{c}}^{\circ N} (z) \in D$, then 
 $\sample{\omega}{n} ( f_{\tilde{c}}^{\circ N} (z)) \to \infty$ as $n \to \infty$. 
 This completes the proof. 
\end{proof}

\begin{lemma}\label{lem:boundedattractor}
Suppose that the interior of $ \B{c}{r}$ contains a superattracting parameter $\tilde{c}$.  
If $L = \overline{\bigcup_{g \in G_{c, r}}\{g(0)\}}$ is bounded in $\CC$, then $L$ is a planar minimal set of $G_{c, r}$. 
\end{lemma}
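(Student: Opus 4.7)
The plan is to verify that $L$ has the right structure to be a planar minimal set and then to identify it with the candidate provided by Zorn's lemma. First, I would check that $L$ is non-empty, compact in $\CC$, and forward $G_{c,r}$-invariant. Non-emptiness follows from $0 \in L$: if $p$ denotes the period of the superattracting cycle of $f_{\tilde c}$ at $0$, then $f_{\tilde c}^{\circ p}(0)=0$, so $0 \in \bigcup_{g \in G_{c,r}}\{g(0)\} \subset L$. Boundedness is assumed and $L$ is closed by construction, so $L$ is compact, and $L$ is planar because it is contained in $\CC$. Forward invariance follows by continuity: for $z=\lim g_n(0) \in L$ and $h \in G_{c,r}$, we have $h(z)=\lim h(g_n(0)) \in L$ since $h\circ g_n \in G_{c,r}$ and $L$ is closed.

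Next, by Zorn's lemma applied to the non-empty, closed, forward $G_{c,r}$-invariant subsets of $L$, there exists a minimal set $M$ of $G_{c,r}$ with $M \subset L$; this $M$ is planar because $L \subset \CC$. The whole argument will reduce to showing $0 \in M$: once this is established, forward invariance of $M$ gives $\{g(0) : g \in G_{c,r}\} \subset M$, whence $L = \overline{\{g(0) : g \in G_{c,r}\}} \subset M \subset L$, so $L=M$ and $L$ is minimal.

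To prove $0 \in M$ I would exploit $\tilde c$ in two independent ways. First, since $\tilde c$ lies in the interior of $\B{c}{r}$ we have $r>0$, and for any $z \in M$ forward invariance gives $\B{z^{2}+c}{r} = \{f_{c'}(z) : c' \in \B{c}{r}\} \subset M$, so $M$ has non-empty interior in $\CC$. Second, since $f_{\tilde c}\in G_{c,r}$ maps $M$ into itself and $M$ is bounded, no $f_{\tilde c}$-orbit starting in $M$ can escape to $\infty$; hence $M \subset K(f_{\tilde c})$, the autonomous filled Julia set of $f_{\tilde c}$.

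The step I expect to be subtlest is ruling out $M \subset J(f_{\tilde c})$. For this I would invoke the classical fact that the Julia set $J(f_{\tilde c})$ has empty interior, since $\tilde c$ is hyperbolic (indeed superattracting). Combined with the non-empty interior of $M$, this forces $M \cap \bigl(K(f_{\tilde c}) \setminus J(f_{\tilde c})\bigr) \neq \emptyset$, so $M$ meets the Fatou basin of the attracting cycle $\{0, \alpha_{1}, \dots, \alpha_{p-1}\}$ of $f_{\tilde c}^{\circ p}$. Picking $w$ in this intersection, the iterates $f_{\tilde c}^{\circ n}(w)$ accumulate on the cycle, and the forward invariance of $M$ under $f_{\tilde c}$ together with closedness of $M$ then force the whole cycle, in particular $0$, to lie in $M$. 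This yields $0 \in M$ and completes the proof.
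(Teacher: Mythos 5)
Your proof is correct, but it is organized differently from the paper's. The paper verifies minimality directly through the characterization in Definition \ref{def:minimal}: for each $z\in L$ it shows $\overline{\bigcup_{g}\{g(z)\}}=L$, proving ``$\subset$'' by approximating $z$ with points $g_{n}(0)$ and ``$\supset$'' by showing that every $z\in L$ lies in $\mathrm{int}\,K(f_{\tilde c})$ --- the case $z\in\partial K(f_{\tilde c})$ is excluded by perturbing a single parameter to push $f_{c_{1}}(z)$ out of $K(f_{\tilde c})$ --- and then using the No Wandering Domain Theorem to drive $f_{\tilde c}^{\circ N}(z)$ into a preimage of an arbitrary open set meeting $L$. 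You instead extract a minimal set $M\subset L$ by Zorn's lemma and reduce everything to showing $0\in M$; your key observation that $M\supset\B{z^{2}+c}{r}$ has non-empty interior while $J(f_{\tilde c})$ does not lets you find a point of $M$ in $\mathrm{int}\,K(f_{\tilde c})$ without any case analysis on $\partial K(f_{\tilde c})$, after which the same superattracting dynamics forces the cycle of $0$ into $M$. Both arguments rest on the identical dynamical input (all bounded Fatou components of the superattracting map are absorbed by the basin of the cycle through $0$); yours trades the paper's pointwise boundary-exclusion argument for Zorn's lemma (which the paper itself uses in Lemma \ref{lem:MinimalInclusion}) plus the empty-interior property of Julia sets, and arguably isolates more clearly why the critical point must belong to the unique planar minimal set.
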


\begin{proof}
Fix any $z \in L$. 
First, we show $\overline{\bigcup_{g \in G_{c, r}}\{g(z)\}} \subset L.$ 
By definition, there exists $g_{n} \in G_{c, r}$ such that $g_{n}(0) \to z$ as $n \to \infty.$ 
Then, for every $g \in G_{c, r}$, we have  $g \circ g_{n}(0) \to g(z)$ as $n \to \infty.$  
Since $G_{c, r}$ is a semigroup, we have  $g \circ g_{n} \in G_{c, r}$. 
Thus, we have $g(z) \in L,$ which implies   $\overline{\bigcup_{g \in G_{c, r}}\{g(z)\}} \subset L.$ 
Next, we show $\overline{\bigcup_{g \in G_{c, r}}\{g(z)\}} \supset L.$ 
For every open set $U$ which intersects $L$, take $g_{1}  \in G_{c, r}$ such that $g_{1}(0) \in U.$ 
Since $g_{1}$ is continuous, there exists an open set  $U_{0}$ such that $0 \in U_{0}$ and $g_{1}(U_{0}) \subset U.$ 
Since $\overline{\bigcup_{g \in G_{c, r}}\{g(z)\}} \subset L,$ 
the autonomous orbit $\{\iteration{f}{\tilde{c}}{n}(z)\}_{n \in \NN}$ is bounded,  
and hence $z \in K(f_{\tilde{c}})$. 
If $z \in \partial K(f_{\tilde{c}})$, then there exists $c_{1} \in \B{c}{r}$ such that $f_{c_{1}}(z)  \not \in K(f_{\tilde{c}}),$ 
but this contradicts  $\overline{\bigcup_{g \in G_{c, r}}\{g(z)\}} \subset L.$ 
Thus,  $z \in \mathrm{int } K(f_{\tilde{c}})$. 
It follows from the No Wandering Domain Theorem that 
 there exists $N \in \NN$ such that $\iteration{f}{\tilde{c}}{N}  (z) \in U_{0}$. 
Thus, $g_{1} \circ \iteration{f}{\tilde{c}}{N} (z) \in U,$ which implies $\overline{\bigcup_{g \in G_{c, r}}\{g(z)\}} \supset L.$ 
This completes the proof. 
\end{proof}

The following is the main theorem of this paper.

\begin{theorem}\label{th:BififfTotDisconn}
Suppose that the interior of $ \B{c}{r}$ contains a superattracting parameter $\tilde{c}$.  
Then the following four are equivalent.
\begin{enumerate}[\hspace{12pt}(1)]
\item The inequality $r \leq \rbif (c)$  holds. 
\item For every $\omega \in \Omega_{c, r}$, the orbit $\{ \sample{\omega}{n}(0) \}_{n=1}^{\infty}$ is bounded; hence $T_{c, r}(0) = 0$. 
\item The random Julia set $J_{\omega}$ is connected for every $\omega \in \Omega_{c, r}$. 
\item The set $\overline{\bigcup_{g \in G_{c, r}}\{g(0)\}}$ is a planar minimal set of $G_{c, r}$. 
\end{enumerate}
In contraposition to this, the following four are equivalent. 
\begin{enumerate}[\hspace{12pt}$(1')$]
\item The inequality $r > \rbif (c)$ holds. 
\item For $\PP_{c, r}$-almost every $\omega \in \Omega_{c, r}$, the orbit $\sample{\omega}{n}(0) \to \infty \ (n \to \infty)$: $T_{c, r}(0) = 1$. 
\item The random Julia set $J_{\omega}$ is totally disconnected for $\PP_{c, r}$-almost every $\omega \in \Omega_{c, r}$. 
\item The set $\overline{\bigcup_{g \in G_{c, r}}\{g(0)\}}$ is not bounded in $\CC$. 
\end{enumerate}
Furthermore, either the former or the latter is valid. 
\end{theorem}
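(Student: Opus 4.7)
The plan is to establish the unprimed equivalences $(1)\iff(2)\iff(3)\iff(4)$ directly, obtain the primed equivalences $(1')\iff(2')\iff(3')\iff(4')$ by taking logical negations and then upgrading ``not $(2)$'' to ``$(2')$'' using the mean-stability machinery of Subsection \ref{ssec:MS}, and note that the ``furthermore'' assertion reduces to the tautology $r\leq\rbif(c)$ or $r>\rbif(c)$. The tool I will repeatedly use is a \emph{bounded-or-escaping dichotomy} extracted from Lemma \ref{lem:JAK}(1): choose $R>0$ with $f_{c'}(\rs\setminus D_{R})\subset \rs\setminus D_{2R}$ for every $c'\in \B{c}{r}$; then, once $\sample{\omega}{N}(0)$ exits $\overline{D_{R}}$, subsequent iterates stay outside $D_{2R}$ and standard escape estimates force $\sample{\omega}{n}(0)\to\infty$. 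Hence for each $\omega$, the orbit of $0$ is either bounded in $\CC$ or tends to $\infty$.

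For the unprimed list, the equivalence $(2)\iff(4)$ is immediate from Lemma \ref{lem:boundedattractor}: if every orbit of $0$ is bounded, then $L=\overline{\bigcup_{g\in G_{c,r}}\{g(0)\}}$ is bounded, so Lemma \ref{lem:boundedattractor} promotes $L$ to a planar minimal set; conversely a planar minimal set is compact and avoids $\infty$, hence bounded in $\CC$, and every orbit of $0$ lies in $L$ by definition. Then $(4)\Rightarrow(1)$ follows from Theorem \ref{th:listOfMinimalSets} together with Definition \ref{def:bifRad}, because the existence of a planar minimal set forces $r\leq\rbif(c)$; and $(1)\Rightarrow(2)$ is the contrapositive of Lemma \ref{lem:cond} combined with the dichotomy above. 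For $(2)\iff(3)$, I invoke Theorem \ref{th:conditionToBeConnected}: connectedness of $J_{\omega}$ is equivalent to boundedness of $\{\sample{\sigma^{k}\omega}{n}(0)\}_{n}$ for every $k$; since $\Omega_{c,r}$ is shift-invariant, quantifying boundedness over every $\omega\in\Omega_{c,r}$ already subsumes every shift $\sigma^{k}\omega$, so $(2)\iff(3)$. The claim $T_{c,r}(0)=0$ in $(2)$ is then clear, since the escape event is empty.

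For the primed list, $(1')$ and $(4')$ are the literal negations of $(1)$ and $(4)$, hence equivalent via the unprimed part. For $(1')\Rightarrow(2')$: when $r>\rbif(c)$, Theorem \ref{th:listOfMinimalSets} says $G_{c,r}$ has no planar minimal sets, and the superattracting assumption forces $\mathrm{int}(\supp\mu_{c,r})\neq\emptyset$; Lemma \ref{lem:kernelEmpty} then gives $J_{\mathrm{ker}}(G_{c,r})=\emptyset$, and Theorem \ref{th:Sumi} yields $T_{c,r}(0)=T_{\{\infty\},\mu_{c,r}}(0)=1$ because $\{\infty\}$ is now the only attracting minimal set. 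The step $(2')\Rightarrow(3')$ is precisely Corollary \ref{cor:T0=1ImpliesTypFastEsc}; and $(3')\Rightarrow(1')$ follows because almost-sure total disconnectedness produces some $\omega$ with $J_{\omega}$ not connected, so $(3)$ and hence $(1)$ fails. The ``furthermore'' assertion is the tautology that exactly one of $r\leq\rbif(c)$ and $r>\rbif(c)$ holds.

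The main obstacle I anticipate is the primed step $(1')\Rightarrow(2')$: one must use the full strength of the classification in Theorem \ref{th:listOfMinimalSets} to rule out attracting minimal sets other than $\{\infty\}$, and then chain it with Lemma \ref{lem:kernelEmpty} to license the application of Theorem \ref{th:Sumi}. Each individual step is mild, but the zero-one upgrade from ``not every orbit bounded'' to ``almost every orbit escapes'' is precisely where the mean-stability machinery of Sumi is indispensable, and this is the substantive content of the theorem beyond the formal unprimed equivalences.
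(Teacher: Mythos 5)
Your proposal is correct and follows essentially the same route as the paper: the same key ingredients (Lemma \ref{lem:cond}, Lemma \ref{lem:boundedattractor}, Theorem \ref{th:conditionToBeConnected}, Theorem \ref{th:listOfMinimalSets}, Theorem \ref{th:Sumi} licensed by Lemma \ref{lem:kernelEmpty}, and Corollary \ref{cor:T0=1ImpliesTypFastEsc}) are chained in the same way, with only the order of implications within each cycle rearranged. Your bounded-or-escaping dichotomy from Lemma \ref{lem:JAK}(1) replaces, somewhat more cleanly, the separate Remark the paper uses to upgrade ``does not tend to $\infty$'' to ``bounded''.
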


\begin{proof}
It is obvious that $(1)$ or $(1')$ is valid. 
First, we show the equivalence of the four from $(1)$ to $(4).$ 

 $(1)\Rightarrow (2):$  
If $r \leq \rbif (c)$, then $\sample{\omega}{n} (0) \not\to \infty$ as $n \to \infty$ for every $\omega \in \Omega_{c, r}$ by Lemma \ref{lem:cond}. 
Hence,   $T_{c, r}(0) = 0$. 

 $(2)\Rightarrow (3):$  
Take a large $R > 0$ as in Lemma \ref{lem:JAK} (1). 
 Suppose that for every $\omega \in \Omega_{c, r}$, we have  $\{\sample{\omega}{n} (0) \}_{n=1}^{\infty}$ is bounded. 
 Then $|\sample{\omega}{n} (0)| < R$ for every $n\in \NN$. 
By Theorem \ref{th:conditionToBeConnected},  the random Julia set $J_{\omega}$ is connected for every $\omega \in \Omega_{c, r}$. 
 
$(3)\Rightarrow (4):$  
Suppose that the random Julia set $J_{\omega}$ is connected for every $\omega \in \Omega_{c, r}$. 
For a large $R > 0$ as in Lemma \ref{lem:JAK} (1), 
it follows from Theorem \ref{th:conditionToBeConnected} that 
$|\sample{\sigma^{k}\omega}{n}(0)|  \leq R$  for every $\omega \in \Omega_{c, r}$,  $n \in \NN$, and $k \in \NN$.  
Thus, the set $L = \overline{\bigcup_{g \in G_{c, r}}\{g(0)\}}$ is bounded in $\CC.$ 
By Lemma \ref{lem:boundedattractor}, the set $L$ is a planar minimal set.

$(4)\Rightarrow (1):$  
This is trivial. 
See the latter part of Definition \ref{def:bifRad}. 

The argument above completes the proof of the equivalence from $(1)$ to $(4).$ 
Next, we show the equivalence of the four from $(1')$ to $(4').$ 

 $(1')\Rightarrow (2'):$  
 If $r > \rbif (c)$, then $G_{c, r}$ has no planar minimal sets by Theorem \ref{th:listOfMinimalSets}. 
 It follows from Theorem \ref{th:Sumi} that $T_{c, r}(z) = 1$ for every $z \in \rs$, and hence $T_{c, r}(0) = 1$. 

$(2')\Rightarrow (3'):$ 
Suppose $T_{c, r}(0) = 1$. 
If $r>0$, 
then the random Julia set $J_{\omega}$ is totally disconnected $\PP_{c, r}$-almost surely by Corollary \ref{cor:T0=1ImpliesTypFastEsc}. 
If $r = 0,$ 
then it trivially follows that the autonomous Julia set is totally disconnected. 

$(3')\Rightarrow (4'):$ 
If $(3')$ holds, then there exists $\omega \in \Omega_{c, r}$ such that the random Julia set $J_{\omega}$ is disconnected.
By Theorem \ref{th:conditionToBeConnected}, there exists $k \in \NN$ such that $\sample{\sigma^{k}\omega}{n}(0) \to \infty$ as $n \to \infty.$ 
Since $\sample{\sigma^{k}\omega}{n} \in G_{c, r}$, then $\overline{\bigcup_{g \in G_{c, r}}\{g(0)\}}$ is unbounded. 

$(4')\Rightarrow (1'):$ 
Let $R > 0$ be large, as in Lemma \ref{lem:JAK} (1). 
If  $\overline{\bigcup_{g \in G_{c, r}}\{g(0)\}}$ is not bounded, then there exists $g \in G_{c, r}$ such that $|g(0)| > R.$ 
By definition of $G_{c, r},$  there exist $N \in \NN$ and $c_{1}, c_{2}, \dots, c_{N} \in \B{c}{r}$ 
such that $g = \random{c_{N}}{c_{2}}{c_{1}}.$ 
Letting $c_{n} = c$ for every $n \geq N+1$ and $\omega = (c_{n})_{n=1}^{\infty}$, we have $\sample{\omega}{n}(0) \to \infty$ as $n \to \infty$ by Lemma    \ref{lem:JAK}. 
It follows from Lemma \ref{lem:cond} that $r > \rbif (c)$. 
\end{proof}

\begin{remark}
 The orbit $\{ \sample{\omega}{n}(0) \}_{n=1}^{\infty}$ is bounded for every $\omega \in \Omega_{c, r}$
 if and only if 
 $T_{c, r}(0) = 0$
 \end{remark}

\begin{proof}
It is obvious that ``only if'' part is true. 
 Suppose $T_{c, r}(0) = 0$ and we show there does not exist $\omega \in \Omega_{c, r}$ such that $\sample{\omega}{n} (0) \to \infty$ as $n \to \infty$ by contradiction. 
 Take a large number $R > 0$ as in Lemma \ref{lem:JAK} (1). 
 If there exists $\omega = (c_{n})_{n \in \NN} \in \Omega_{c, r}$ such that $\sample{\omega}{n} (0) \to \infty$, 
 then there exists $N \in \NN$ such that $|\random{c_{N}}{c_{2}}{c_{1}} (0)| > R.$ 
Since $\CC \ni c \mapsto f_{c} \in \Poly$ is continuous, 
there exists an open neighborhood $U_{n}$ of $c_{n}$ for every $n =1, 2, \dots, N$ such that 
$|\random{c'_{N}}{c'_{2}}{c'_{1}} (0)| > R$ for every $c'_{n} \in U_{n}$ ($n =1, 2, \dots, N$). 
Moreover, we have $\PP_{c, r} ( \mathfrak{U} ) > 0$ for $ \mathfrak{U} = \prod_{n=1}^{N} U^{j}_{n} \times \prod_{N+1}^{\infty} \B{c}{r}.$ 
However, for every $\omega' \in \mathfrak{U}$, it follows from Lemma \ref{lem:JAK} that $\sample{\omega'}{n} (0) \to \infty$ as $n \to \infty$, 
contradicting the assumption $T_{c, r}(0) = 0$. 
This shows that for every $\omega \in \Omega_{c, r}$, we have  $|\sample{\omega}{n} (0)| \leq R$ for every $n \in \NN.$ 
 \end{proof}

\subsection{General properties}\label{ssec:general}
In this subsection, we give a trivial estimate of the bifurcation radius and show the continuity. 
The following is due to \cite{bbr}.


\begin{lemma}\label{lem:MandelInside}
If $\B{c}{r} \cap (\mathbb{C}\setminus \Mandel) \neq \emptyset$, 
then for every $z \in \CC$, there exists $\omega \in \Omega_{c, r}$ such that 
$\sample{\omega}{n} (z) \to \infty$ as $n \to \infty.$ 
\end{lemma}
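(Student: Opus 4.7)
The plan is to build, for each $z \in \CC$, an explicit escaping sample $\omega \in \Omega_{c,r}$ out of a single witness parameter $c^{*}\in\B{c}{r}\setminus\Mandel$ together with one carefully chosen initial perturbation. I will work in the nontrivial case $r>0$; the case $r=0$ degenerates to autonomous escape of every $z\in\CC$ under $f_{c}$, which already fails on the Cantor Julia set $J(f_{c})$ when $c\notin\Mandel$, so positivity of $r$ is essential.

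First I would promote the witness parameter into the open interior of the support. Since $\CC\setminus\Mandel$ is open and every point of $\B{c}{r}$ lies in the closure of the open disk $D(c,r)=\mathrm{int}\,\B{c}{r}$, the hypothesis $\B{c}{r}\cap(\CC\setminus\Mandel)\neq\emptyset$ forces $D(c,r)\cap(\CC\setminus\Mandel)\neq\emptyset$; otherwise $D(c,r)\subset\Mandel$ and then by closedness of $\Mandel$ also $\B{c}{r}\subset\Mandel$, a contradiction. I pick $c^{*}$ in this intersection and then a radius $\delta>0$ with $\B{c^{*}}{\delta}\subset\B{c}{r}$. The external input I will invoke is the classical dichotomy for quadratic polynomials: because $c^{*}\notin\Mandel$, the autonomous filled Julia set $K(f_{c^{*}})$ is totally disconnected (a Cantor set), so its complementary basin at infinity $A(f_{c^{*}})=\CC\setminus K(f_{c^{*}})$ is open and dense in $\CC$.

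Finally, given $z\in\CC$, I would note that as $c_{1}$ sweeps the closed disk $\B{c^{*}}{\delta}$, the image $f_{c_{1}}(z)=z^{2}+c_{1}$ sweeps the closed disk $\B{z^{2}+c^{*}}{\delta}$, which has positive radius and therefore meets the dense open set $A(f_{c^{*}})$. Fixing one such $c_{1}\in\B{c^{*}}{\delta}\subset\B{c}{r}$ and completing the sequence by $c_{n}=c^{*}$ for all $n\ge 2$ produces $\omega=(c_{n})_{n\ge 1}\in\Omega_{c,r}$, and then $\sample{\omega}{n}(z)=f_{c^{*}}^{\circ(n-1)}(f_{c_{1}}(z))\to\infty$ because $f_{c_{1}}(z)\in A(f_{c^{*}})$. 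There is no serious obstacle in this scheme; the only delicate point is the initial interior promotion of $c^{*}$, which is what makes a genuine one-step perturbation inside $\B{c}{r}$ available and pins down where the $r>0$ hypothesis is really used.
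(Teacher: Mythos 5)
Your proof is correct and follows essentially the same route as the paper's: pick a witness parameter in $\B{c}{r}\setminus\Mandel$, use that its filled Julia set has empty interior (equivalently, its basin at infinity is open and dense), perturb the first coordinate so that $z$ lands in that basin, and then iterate the witness map. The only cosmetic differences are that you promote the witness into the open interior of $\B{c}{r}$ (the paper instead exploits that the one-step image $\{z^{2}+c_{1}\colon c_{1}\in\B{c}{r}\}$ is already a disk of radius $r>0$) and that you correctly flag the implicit hypothesis $r>0$, which the paper leaves tacit.
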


\begin{proof}
Fix $c' \in \B{c}{r} \setminus \Mandel.$ 
Then the autonomous filled Julia set $K(f_{c'})$ of $f_{c'}$ has no interior point. 
Thus, if $z \in K(f_{c'})$, then there exists $c_{1} \in \B{c}{r}$ such that $f_{c_{1}}(z) \notin K(f_{c'})$. 
In this case, we have $f_{c'}^{\circ n} \circ f_{c_{1}}(z) \to \infty$ as $n \to \infty.$ 
Also, if $z \notin K(f_{c'})$, then by definition $f_{c'}^{\circ n}(z) \to \infty$ as $n \to \infty.$ 
This completes the proof.
\end{proof}

Denote by $\dist(c, \partial \mathcal{M})$ the Euclidean distance from the point $c$ to the compact set $\partial \mathcal{M}$.  

\begin{lemma}\label{lem:trivialEstimate}
For every $c \in \CC$, we have $\rbif(c) \leq \dist(c, \partial \mathcal{M}).$ 
\end{lemma}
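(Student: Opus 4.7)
The plan is to combine Lemma \ref{lem:MandelInside} with the characterization of $\rbif(c)$ as the infimum of radii for which no planar minimal set exists. The core observation is: once $\B{c}{r}$ intersects $\CC \setminus \Mandel$, every point has some orbit escaping to $\infty$, which rules out any planar minimal set.

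First I would dispose of the trivial case $c \notin \mathcal{H}$: by Definition \ref{def:bifRad}, $\rbif(c)=0$, and since $\dist(c,\partial\Mandel)\geq 0$, the inequality is immediate. So assume $c \in \mathcal{H}$, in which case $\rbif(c) > 0$.

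Next, I would fix an arbitrary $r > \dist(c, \partial \Mandel)$ and show that $G_{c,r}$ has no planar minimal set. Since $\B{c}{r}$ strictly contains a ball of radius greater than the distance from $c$ to $\partial\Mandel$, we have $\B{c}{r} \cap (\CC \setminus \Mandel) \neq \emptyset$. Lemma \ref{lem:MandelInside} then gives, for every $z \in \CC$, some sequence $\omega = (c_n)_{n=1}^\infty \in \Omega_{c,r}$ with $\sample{\omega}{n}(z) \to \infty$. Now suppose for contradiction that $L \subset \rs$ is a planar minimal set of $G_{c,r}$. Since $L$ is compact and does not contain $\infty$, it is a bounded subset of $\CC$. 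Forward $G_{c,r}$-invariance yields $\sample{\omega}{n}(z) = f_{c_n}\circ\cdots\circ f_{c_1}(z) \in L$ for every $n$ and every $z \in L$, so no orbit starting in $L$ can escape. Taking any $z \in L$ and applying Lemma \ref{lem:MandelInside} produces an escaping orbit from $z$, a contradiction. Hence $G_{c,r}$ has no planar minimal set.

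Finally, Theorem \ref{th:listOfMinimalSets} asserts that at $r = \rbif(c)$ the semigroup $G_{c,r}$ does possess a (non-attracting) planar minimal set. Combined with the previous paragraph, this forces $\rbif(c) \neq r$ for every $r > \dist(c, \partial \Mandel)$, and more strongly $r \geq \rbif(c)$ (equivalently, using the infimum characterization in Definition \ref{def:bifRad}). Taking the infimum over all such $r$ yields $\rbif(c) \leq \dist(c, \partial \Mandel)$, as required.

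There is no serious obstacle: the entire argument is a one-step application of Lemma \ref{lem:MandelInside} together with the trivial fact that a planar minimal set is bounded. The only subtlety worth double-checking is the boundary behavior in the infimum/supremum characterization of $\rbif(c)$, which is settled directly by Theorem \ref{th:listOfMinimalSets}.
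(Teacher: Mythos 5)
Your proposal is correct and follows essentially the same route as the paper: apply Lemma \ref{lem:MandelInside} to conclude that $G_{c,r}$ has no planar minimal set whenever $r > \dist(c,\partial\Mandel)$ (a planar minimal set being bounded and forward invariant, it cannot contain a point with an escaping orbit), then invoke the infimum characterization of $\rbif(c)$ in Definition \ref{def:bifRad}. The paper states this more tersely, but the content is identical.
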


\begin{proof}
From Lemma \ref{lem:MandelInside},  if $r  >  \dist(c, \partial \mathcal{M}),$ 
then the polynomial semigroup $G_{c, r}$ has no planar minimal sets. 
By Definition \ref{def:bifRad}, $\rbif(c)$ is the infimum value for $r \geq 0$ such that  $G_{c, r}$ has no planar minimal sets. 
Hence, $\rbif(c) \leq  \dist (c, \partial \mathcal{M}). $
\end{proof}

The following is a part of Main Result \ref{mr:estimates}. 

\begin{theorem}\label{th:1-Lip}
As a function, 
the bifurcation radius $\rbif \colon \mathbb{C} \to [0, \infty)$ is $1$-Lipschitz. 
That is,  
$|\rbif(c) - \rbif(c')| \leq |c -c'|$ for every $c, c' \in \CC.$  
\end{theorem}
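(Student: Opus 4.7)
The plan is to exploit the monotonicity of planar minimal sets under inclusion of generating parameter disks, combined with the triangle inequality to relate $\B{c}{r}$ and $\B{c'}{r'}$. Concretely, I will show the one-sided inequality $\rbif(c) \leq \rbif(c') + |c - c'|$; the full Lipschitz bound then follows by symmetry.

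First, set $d = |c - c'|$. For any $r \geq d$, the triangle inequality yields $\B{c'}{r - d} \subset \B{c}{r}$: indeed, if $|c'' - c'| \leq r - d$, then $|c'' - c| \leq |c'' - c'| + |c' - c| \leq r$. Hence the generating set of $G_{c', r-d}$ is contained in that of $G_{c, r}$, so by Lemma \ref{lem:MinimalInclusion}, every minimal set of $G_{c, r}$ contains some minimal set of $G_{c', r-d}$.

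Now fix any $r > \rbif(c') + d$. Then $r - d > \rbif(c') \geq 0$, so by Theorem \ref{th:listOfMinimalSets}, the semigroup $G_{c', r-d}$ has no planar minimal sets. Suppose for contradiction that $G_{c, r}$ admits a planar minimal set $L$. By the preceding paragraph, $L$ contains some minimal set $L'$ of $G_{c', r-d}$. Since $L$ is planar, i.e.\ $\infty \notin L$, the subset $L'$ is also planar, contradicting the absence of planar minimal sets for $G_{c', r-d}$. Therefore $G_{c, r}$ has no planar minimal sets either, which by the characterisation in Definition \ref{def:bifRad} forces $r \geq \rbif(c)$.

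Taking the infimum over $r > \rbif(c') + d$ gives $\rbif(c) \leq \rbif(c') + |c - c'|$. Exchanging the roles of $c$ and $c'$ yields the reverse inequality, and combining the two proves $|\rbif(c) - \rbif(c')| \leq |c - c'|$. The argument is essentially a definition-chase; the only point requiring care is noting that a minimal subset of a planar minimal set is automatically planar, so that planarity is preserved under the inclusion provided by Lemma \ref{lem:MinimalInclusion}. No separate treatment of the case $c' \notin \mathcal{H}$ is needed, since there $\rbif(c') = 0$ and Theorem \ref{th:listOfMinimalSets} still guarantees that $G_{c', r-d}$ has no planar minimal sets for every $r - d > 0$.
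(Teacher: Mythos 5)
Your proposal is correct and follows essentially the same route as the paper: include the smaller disk $\B{c'}{r-d}\subset\B{c}{r}$, apply Lemma \ref{lem:MinimalInclusion} to propagate the absence of planar minimal sets from the smaller semigroup to the larger one, and take the infimum over $r$; the only difference is cosmetic relabeling of the radii. Your explicit remark that a minimal subset of a planar minimal set is itself planar is a detail the paper leaves implicit, and it is the right justification for that step.
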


\begin{proof}
Fix $c, c' \in \CC$. 
Pick any $r > | c -c' | $ and set $\epsilon = r - | c -c' | > 0$. 
By definition, the polynomial semigroup $G_{c, \rbif(c)+\epsilon}$ has no planar minimal sets. 
Since  $\B{c}{\rbif(c) + \epsilon} \subset  \B{c'}{\rbif(c)+r}$, 
it follows from Lemma \ref{lem:MinimalInclusion}  that also  $G_{c', \rbif(c)+r}$ has no planar minimal sets. 
This implies that $\rbif(c') \leq \rbif(c)+r$, and hence $\rbif(c') - \rbif(c) \leq  | c -c' |.$ 
Exchanging $c$ and $c'$ each other, we have also $\rbif(c) - \rbif(c') \leq  | c' -c |,$ which completes the proof.  
\end{proof}

\subsection{Inside the main cardioid}
In this subsection, we present some estimates of $\rbif(c)$ when $c$ is in the main cardioid. 
Recall that the main cardioid is the set of all parameters $c \in \CC$ for which there exists $\lambda \in \CC$ with $|\lambda| < 1$ such that $c =  \lambda/2 -\lambda^{2}/4,$ 
and its boundary is contained in $\partial \mathcal{M}.$

\begin{cor}\label{cor:rbif0}
If $ c = 0$, then  $\rbif(0) = 1/4$. 
\end{cor}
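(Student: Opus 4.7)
My plan is to sandwich $\rbif(0)$ between $1/4$ and $1/4$ using Lemma \ref{lem:trivialEstimate} for the upper bound and a direct invariant-set construction for the lower bound.

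For the upper bound $\rbif(0) \leq 1/4$, I would apply Lemma \ref{lem:trivialEstimate}, so the task reduces to verifying $\dist(0, \partial \Mandel) = 1/4$. The open main cardioid lies in the interior of $\Mandel$, so its boundary is contained in $\partial \Mandel$, and it suffices to compute the minimum of $|c(\theta)|$ along the parameterization $c(\theta) = e^{i\theta}/2 - e^{2i\theta}/4$ of the cardioid boundary. A short calculation yields
\begin{equation*}
|c(\theta)|^{2} = \tfrac{5}{16} - \tfrac{1}{4}\cos\theta,
\end{equation*}
which is minimized at $\theta = 0$ with value $1/16$, attained at the cusp $c = 1/4$. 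Since the open main cardioid around $0$ lies in $\mathrm{int}\,\Mandel$, no boundary point of $\Mandel$ can be closer to $0$, and thus $\dist(0, \partial \Mandel) = 1/4$.

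For the lower bound $\rbif(0) \geq 1/4$, I would exhibit a planar minimal set of $G_{0, 1/4}$ directly, which by Definition \ref{def:bifRad} forces $\rbif(0) \geq 1/4$. The natural candidate is the closed disk $\B{0}{1/2}$: for every $c \in \B{0}{1/4}$ and every $z \in \B{0}{1/2}$,
\begin{equation*}
|f_{c}(z)| \leq |z|^{2} + |c| \leq \tfrac{1}{4} + \tfrac{1}{4} = \tfrac{1}{2},
\end{equation*}
so $f_{c}(\B{0}{1/2}) \subset \B{0}{1/2}$. Hence $\B{0}{1/2}$ is a non-empty, compact, planar subset forward invariant under $G_{0, 1/4}$, and by Zorn's lemma it contains a minimal set of $G_{0, 1/4}$, which is necessarily planar.

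Combining the two bounds gives $\rbif(0) = 1/4$. There is no real obstacle in this argument; the only minor subtlety is bookkeeping with Definition \ref{def:bifRad} to ensure that the existence of a planar minimal set at $r = 1/4$ indeed implies $\rbif(0) \geq 1/4$ rather than a strict inequality. Alternatively, one could derive the lower bound from the Br\"uck--B\"uger--Reitz result (connectedness of random Julia sets for $r \leq 1/4$) combined with Theorem \ref{th:BififfTotDisconn}, or the upper bound from Theorem \ref{th:LZ21}; I prefer the sandwich above because each inequality then follows from a single line.
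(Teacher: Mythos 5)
Your proof is correct and follows essentially the same route as the paper: the upper bound via Lemma \ref{lem:trivialEstimate} together with $\dist(0,\partial\Mandel)=1/4$, and the lower bound by checking that $\overline{D_{1/2}}$ is forward invariant under $G_{0,1/4}$, hence contains a planar minimal set. The paper merely states the invariance as a ``straight calculation,'' while you spell out the one-line estimate $|f_c(z)|\leq |z|^2+|c|\leq 1/2$ and the cardioid distance computation explicitly.
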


\begin{proof}
This is essentially due to \cite{bbr}. 
By Lemma \ref{lem:trivialEstimate}, we have $\rbif(0) \leq 1/4$. 
A straight calculation shows that $\overline{D_{1/2}}$ is forward invariant under the polynomial semigroup $G_{0, 1/4}$, and hence $\rbif(0) \geq 1/4.$ 
\end{proof}

We can easily verify that $\overline{D_{1/2}}$ is the planar minimal set of $G_{0, 1/4}$. 
Since the autonomous Julia of $f_{1/4}$ contains $z = 1/2$ which is the parabolic fixed point of $f_{1/4},$ 
the Julia set of the semigroup satisfies $1/2 \in J(G_{0,1/4})$ by Proposition \ref{prop:J}.
Thus, the planar minimal set $\overline{D_{1/2}}$ of $G_{0, 1/4}$ is J-touching at $z=1/2$. 

Corollary \ref{cor:rbif0} shows that the equality in Lemma \ref{lem:trivialEstimate} may hold.  
Now, we show using Theorem \ref{th:1-Lip} that we can determine the bifurcation radius if $0 \leq c \leq 1/4$. 
The following is a part of Main Result \ref{mr:estimates}. 

\begin{theorem}\label{th:positive}
If $0 \leq c \leq 1/4$, then  $\rbif(c) = 1/4 - c$. 
\end{theorem}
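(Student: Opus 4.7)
The plan is to obtain matching upper and lower bounds using, respectively, the trivial estimate of Lemma \ref{lem:trivialEstimate} and the $1$-Lipschitz property of Theorem \ref{th:1-Lip} applied at the known anchor $c = 0$.

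For the upper bound, I would observe that the parameter $1/4$ sits on the boundary of the Mandelbrot set (it is the parabolic cusp of the main cardioid). Hence for any real $c$ with $0 \leq c \leq 1/4$ we have
\begin{equation*}
\dist(c, \partial \mathcal{M}) \leq |c - 1/4| = 1/4 - c.
\end{equation*}
Lemma \ref{lem:trivialEstimate} then yields $\rbif(c) \leq 1/4 - c$ immediately.

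For the lower bound, I would invoke Corollary \ref{cor:rbif0}, which gives $\rbif(0) = 1/4$, together with the $1$-Lipschitz property established in Theorem \ref{th:1-Lip}. Applied to the pair $(c, 0)$, this yields
\begin{equation*}
\rbif(c) \geq \rbif(0) - |c - 0| = 1/4 - c.
\end{equation*}
Combining the two inequalities finishes the proof. Since all the ingredients are already available in the paper, there is no real obstacle; the only thing to check carefully is the elementary geometric fact that the nearest boundary point of $\Mandel$ to a real $c \in [0, 1/4]$ is exactly $1/4$ (or at least no farther than $1/4$), which is transparent from the real segment of the main cardioid.
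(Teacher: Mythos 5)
Your proposal is correct and follows essentially the same route as the paper: the upper bound via Lemma \ref{lem:trivialEstimate} (using that $1/4 \in \partial\Mandel$) and the lower bound via Corollary \ref{cor:rbif0} combined with the $1$-Lipschitz property of Theorem \ref{th:1-Lip}. Your explicit justification that $\dist(c,\partial\Mandel)\leq 1/4-c$ is a welcome detail that the paper leaves implicit.
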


\begin{proof}
By Lemma \ref{lem:trivialEstimate}, we have $\rbif(c) \leq 1/4 - c$ if $0 \leq c \leq 1/4$. 
If $c' = 0$, then $\rbif(c') = 1/4$ from Corollary \ref{cor:rbif0}. 
Thus, Theorem \ref{th:1-Lip} yields that $|\rbif(c) - 1/4| \leq c$ for $0 \leq c \leq 1/4$, which completes the proof.  
\end{proof}

We showed that $\rbif(c) = \dist(c, \partial \mathcal{M})$ if $0 \leq c \leq 1/4$. 
However, the author does not know whether there exists $c \in \CC$ other than these, such that $0 < \rbif(c) = \dist(c, \partial \mathcal{M}).$
For $c < 0$, 
we will present some examples which satisfy $\rbif(c) < \dist(c, \partial \mathcal{M})$.  

\begin{lemma}\label{lem:maincadioid}
For $c = 0$ and $0 < r_{0} \leq 1/4$, let $\delta = (1 - \sqrt{1-4 r_{0}})/2$. Then the following holds.  
\begin{enumerate}
\item  The equality $\delta^{2} + r_{0} = \delta$ holds. Thus, the point $\delta$ is a fixed point of $f_{r_{0}}$. 
\item The set $\overline{D_{\delta}}$ is the planar minimal set of $G_{0, r_{0}}.$ 
\end{enumerate}
\end{lemma}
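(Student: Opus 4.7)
Part (i) is pure algebra: $\delta$ is by construction the smaller root of $z^{2} - z + r_{0} = 0$, so $\delta^{2} + r_{0} = \delta$, which makes $\delta$ a fixed point of $f_{r_{0}}$. For later use in part (ii) I would also record the chain of inequalities
\[
\delta^{2} \leq r_{0} \leq \delta \leq 2 r_{0},
\]
in which $r_{0} \leq \delta$ follows from $\delta - r_{0} = \delta^{2} \geq 0$ and the other two are equivalent reformulations of $u^{2} \leq u$ with $u := \sqrt{1 - 4 r_{0}} \in [0, 1]$.

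For part (ii) the plan has three steps: forward invariance, density of the semigroup orbit of $0$, and reduction of an arbitrary starting point to $0$ in a single step. Forward invariance of $\overline{D_{\delta}}$ under $G_{0, r_{0}}$ is one line: for $z \in \overline{D_{\delta}}$ and $c' \in \B{0}{r_{0}}$, the triangle inequality gives $|z^{2} + c'| \leq \delta^{2} + r_{0} = \delta$, so $\overline{D_{\delta}}$ is a non-empty compact forward $G_{0, r_{0}}$-invariant planar subset.

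Next I would identify the semigroup orbit of $0$ level by level. Let $S_{n}$ denote the set of values $f_{c_{n}} \circ \cdots \circ f_{c_{1}}(0)$ as $(c_{1}, \dots, c_{n})$ ranges over $\B{0}{r_{0}}^{n}$. Using the surjectivity of squaring $\{z^{2} : |z| \leq R\} = \overline{D_{R^{2}}}$ together with the Minkowski-sum identity $\overline{D_{R^{2}}} + \overline{D_{r_{0}}} = \overline{D_{R^{2} + r_{0}}}$, induction gives $S_{n} = \overline{D_{R_{n}}}$ where $R_{1} = r_{0}$ and $R_{n+1} = R_{n}^{2} + r_{0}$. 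The quadratic $x^{2} - x + r_{0}$ vanishes at $\delta$ and $1 - \delta$ and is non-negative on $[0, \delta]$, from which the induction $R_{n} \leq \delta \Rightarrow R_{n+1} \leq \delta$ and the monotonicity $R_{n+1} - R_{n} \geq 0$ are both immediate. The resulting monotone bounded sequence converges to the unique fixed point of $x \mapsto x^{2} + r_{0}$ inside $[0, \delta]$, namely $\delta$, so $R_{n} \nearrow \delta$ and $\overline{\bigcup_{n} S_{n}} = \overline{D_{\delta}}$.

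To conclude minimality, I would pick an arbitrary $z \in \overline{D_{\delta}}$ and use $\delta^{2} \leq r_{0}$: the parameter $c_{1} := - z^{2}$ satisfies $|c_{1}| = |z|^{2} \leq \delta^{2} \leq r_{0}$, so $c_{1} \in \B{0}{r_{0}}$ and $f_{c_{1}}(z) = 0$. Thus $0$ lies in the $G_{0, r_{0}}$-orbit of $z$, and consequently the orbit closure of $z$ contains the orbit closure of $0$, which equals $\overline{D_{\delta}}$; forward invariance gives the reverse inclusion. By Definition \ref{def:minimal} this makes $\overline{D_{\delta}}$ a minimal set of $G_{0, r_{0}}$, and uniqueness as \emph{the} planar minimal set then follows from Theorem \ref{th:listOfMinimalSets} together with $\rbif(0) = 1/4$ (Corollary \ref{cor:rbif0}). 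The one subtlety worth highlighting is the inequality $\delta^{2} \leq r_{0}$: without it the one-step reduction $z \mapsto 0$ fails on the outer annulus of $\overline{D_{\delta}}$ and a more delicate backwards-iteration argument would be needed, but within the range $0 < r_{0} \leq 1/4$ it is automatic from the definition of $\delta$.
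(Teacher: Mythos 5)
Your proposal is correct and, for the core of the argument, follows the same route as the paper: forward invariance of $\overline{D_{\delta}}$ from $\delta^{2}+r_{0}=\delta$, and the identification of the semigroup orbit of $0$ as an increasing union of round disks whose radii satisfy $R_{n+1}=R_{n}^{2}+r_{0}$ and converge to $\delta$ (the paper phrases this via rotational symmetry of $\B{0}{r_{0}}$ plus convergence of the real orbit $f_{r_{0}}^{\circ n}(0)\to\delta$, which is the same computation). The one place you go beyond the paper is the final step: the paper concludes minimality directly from $\overline{\bigcup_{g}\{g(0)\}}=\overline{D_{\delta}}$, whereas you verify the characterization in Definition \ref{def:minimal} at \emph{every} $z\in\overline{D_{\delta}}$ by choosing $c_{1}=-z^{2}\in\B{0}{r_{0}}$ (legitimate because $\delta^{2}\leq r_{0}$, which you correctly derive) so that $f_{c_{1}}(z)=0$ and the orbit closure of $z$ again fills $\overline{D_{\delta}}$. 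This extra step is a genuine improvement in rigor, since knowing the orbit closure of the single point $0$ does not by itself rule out a smaller invariant set avoiding $0$; your observation that the one-step return to $0$ hinges on $\delta^{2}\leq r_{0}$ is exactly the right point to flag.
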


\begin{proof}
Statement (i) follows from the direct calculation.  
Now, we look at statement (ii). 
By (i), we have $f_{c_{1}}(\overline{D_{\delta}}) \subset \overline{D_{\delta}}$  for every $c_{1} \in \B{0}{r_{0}}$. 
Since $\B{0}{r_{0}}$ is symmetric under rotation, 
then the union $\bigcup_{c_{1} \in \B{0}{r_{0}}} \{f_{c_{1}}(0) \}$ is a round disk centered at the origin. 
By induction, $\bigcup_{c_{1}, c_{2}, \dots, c_{n} \in \B{0}{r_{0}}} \{\random{c_{n}}{c_{2}}{c_{1}}(0) \}$ is a round disk centered at the origin for every $n \in \NN$, and also $\overline{\bigcup_{g \in G_{0, r_{0}}}\{g(0)\}}$ is a closed round disk contained in $\overline{D_{\delta}}$. 
If $c_{n}=r_{0}$ for every $n \in \NN$, then the autonomous iteration $\iteration{f}{r_{0}}{n}(0)$ converges to the fixed point $\delta$ of $f_{r_{0}}$. 
This yields that $\overline{\bigcup_{g \in G_{0, r_{0}}}\{g(0)\}} = \overline{D_{\delta}}$, and hence $\overline{D_{\delta}}$ is a minimal set of $G_{0, r_{0}}$. 
\end{proof}

The following is a part of Main Result \ref{mr:estimates}. 

\begin{theorem}\label{th:bifMainEstimate}
Suppose that $0 < \epsilon \leq 1/2.$ 
For  $c=- \epsilon,$ we have $\rbif(-\epsilon) \leq 1/4 + \epsilon - \epsilon ^{2}.$ 
\end{theorem}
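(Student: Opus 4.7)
The plan is to use Theorem~\ref{th:BififfTotDisconn} to reduce the desired bound to an unboundedness statement about the critical orbit. Since $0 \in \B{-\epsilon}{r}$ is a superattracting parameter sitting in the interior of $\B{-\epsilon}{r}$ whenever $r > \epsilon$, and $1/4 + \epsilon - \epsilon^2 > \epsilon$ for every $\epsilon \in (0, 1/2]$, Theorem~\ref{th:BififfTotDisconn} applies throughout the relevant regime. Hence the target inequality $\rbif(-\epsilon) \leq 1/4 + \epsilon - \epsilon^2$ becomes equivalent to the following: for every $r > 1/4 + \epsilon - \epsilon^2$, the set $L := \overline{\bigcup_{g \in G_{-\epsilon, r}} \{g(0)\}}$ is unbounded in $\CC$, which in turn amounts to exhibiting a sequence $(c_n)_{n \geq 1}$ in $\B{-\epsilon}{r}$ whose random iteration sends $0$ to infinity.

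The key algebraic identity I would exploit is
\[
f_{-\epsilon + r}(-\epsilon) \;=\; \epsilon^2 + (-\epsilon + r) \;=\; r - \epsilon + \epsilon^2,
\]
which, at the threshold value $r = 1/4 + \epsilon - \epsilon^2$, equals precisely $1/4$, the cusp of the main cardioid. Thus for $r$ strictly above the threshold, choosing $c_1 = -\epsilon$ and $c_2 = -\epsilon + r$ yields $z_2 := f_{c_2}(f_{c_1}(0)) > 1/4$. Since $L$ is forward-invariant under each $f_c$ with $c \in \B{-\epsilon}{r}$ and already contains $\B{-\epsilon}{r}$ (because $f_{c_1}(0) = c_1$), the two-step reachable disk $\B{\epsilon^2 - \epsilon}{r}$ lies inside $L$; in particular, $L$ contains real points strictly exceeding $1/4$. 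The strategy is then to chain this initial kick past $1/4$ with further applications of $f_c$ for suitable $c \in \B{-\epsilon}{r}$ in order to produce an unbounded sequence in $L$.

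The main obstacle is precisely this last escape step. For $r$ in the narrow window $(1/4 + \epsilon - \epsilon^2,\ 1/4 + \epsilon)$, the disk $\B{-\epsilon}{r}$ still lies entirely inside the Mandelbrot set, so Lemma~\ref{lem:MandelInside} does not apply directly; and the greedy real-maximization strategy iterating with the rightmost real parameter $-\epsilon + r$ merely converges to the attracting fixed point $1/2 - \sqrt{\epsilon^2 - \sigma}$ of $f_{-\epsilon + r}$ (where $\sigma := r - (1/4 + \epsilon - \epsilon^2) > 0$) rather than escaping. I expect the successful argument to proceed either by cleverly combining real and complex parameter choices so as to jump past the repelling fixed point $1/2 + \sqrt{\epsilon^2 - \sigma}$, or by a contradiction argument invoking the classification of minimal sets (Theorem~\ref{th:class}): any bounded $L$ at the given radius would have to contain a planar minimal set of attracting or $J$-touching type, yet the algebraic kick $f_{-\epsilon + r}(-\epsilon) > 1/4$ past the cardioid cusp is incompatible with the forward-invariance of any such bounded minimal set for $r$ above the threshold.
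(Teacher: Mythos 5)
There is a genuine gap, and you identify it yourself: your construction only gets the critical orbit to a real point slightly beyond $1/4$, which then converges to the attracting fixed point of $f_{r-\epsilon}$ rather than escaping. Neither of your two proposed fallbacks closes this. The second one in particular does not work as stated: a bounded forward-invariant set containing a real point just above $1/4$ is perfectly consistent (the orbit under the rightmost real parameter simply stalls at the attracting fixed point $\approx 1/2-\epsilon$), so there is no ``incompatibility with forward-invariance'' to exploit there. The coincidence that $f_{-\epsilon+r}(-\epsilon)=r-\epsilon+\epsilon^{2}$ equals $1/4$ at the threshold is a red herring; the value $1/4$ is a parameter-plane landmark and has no escape significance on the dynamical real line.

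The paper's actual argument supplies exactly the missing ``clever combination of real and complex choices,'' but the complexity enters through the \emph{initial point}, not the parameters. Arguing by contradiction, suppose $G_{-\epsilon,r}$ has a planar minimal set $L$. Since $\B{-\epsilon}{r}\supset\B{0}{r-\epsilon}$, Lemma \ref{lem:MinimalInclusion} together with the rotational symmetry of $\B{0}{r-\epsilon}$ (Lemma \ref{lem:maincadioid}) forces $L$ to contain the full round disk $\overline{D_{\delta}}$, where $\delta=(1-\sqrt{1-4(r-\epsilon)})/2$ is the attracting fixed point of $f_{r-\epsilon}$. One then applies $f_{c_{1}}$ with $c_{1}=-r-\epsilon$ to the purely imaginary boundary point $i\delta\in L$: since $(i\delta)^{2}=-\delta^{2}$, this yields $-\delta^{2}-r-\epsilon=-\delta-2\epsilon$, which lies beyond the repelling fixed point $-R_{1}$ (where $R_{1}=(1+\sqrt{1-4(r-\epsilon)})/2$) precisely because $2\epsilon>\sqrt{1-4(r-\epsilon)}$ when $r>1/4+\epsilon-\epsilon^{2}$. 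From there one iterates $f_{r-\epsilon}$ and escapes to infinity, contradicting the boundedness and forward invariance of $L$. The essential idea you are missing is that the minimal set necessarily contains imaginary points of modulus $\delta$, and the sign flip $z^{2}\mapsto-\delta^{2}$ at such a point, combined with the leftmost parameter $-r-\epsilon$, provides a kick of size $2\epsilon$ past the attracting--repelling gap $\sqrt{1-4(r-\epsilon)}$; this is exactly where the threshold $1/4+\epsilon-\epsilon^{2}$ comes from.
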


\begin{proof}
Fix any $r > 1/4 + \epsilon - \epsilon^{2}.$ 
We show that $G_{-{\epsilon}, r}$ has no planar minimal sets by contradiction, which implies $\rbif(-\epsilon) < r$. 
Suppose that there exists a planar minimal set $L$ of $G_{-{\epsilon}, r}$. 
Since $\B{-\epsilon}{r} \supset \B{0}{r- \epsilon}$,  
it follows from  Lemma \ref{lem:MinimalInclusion} that $G_{0, r-\epsilon}$ also has a planar minimal set. 
By Corollary \ref{cor:rbif0} we have $ r- \epsilon \leq 1/4$.
Besides, we assumed that $r  -  \epsilon> 1/4 - \epsilon^{2} \geq 0.$ 
By Lemmas \ref{lem:MinimalInclusion} and  \ref{lem:maincadioid} (ii), 
  we have $L \supset \overline{D_{\delta}}$ for $\delta = (1 - \sqrt{1-4 (r - \epsilon) })/2$. 
Let $z_{0}= i \delta$ and $c_{1} = -r - \epsilon$, then $z_{0} \in L$, $c_{1} \in \B{-\epsilon}{r}$, and 
$f_{c_{1}}(z_{0}) = - \delta^{2} - r - \epsilon = - \delta - 2 \epsilon.$  
Now, set $R_{1} = (1 + \sqrt{1 - 4 (r - \epsilon) })/2 > 1/{2}$, 
then 
\begin{align*}
- f_{c_{1}}(z_{0})  -R_{1} 
&= 2 \epsilon + \frac{1 - \sqrt{1-4 (r - \epsilon) }}{2} - \frac{1 + \sqrt{1-4 (r - \epsilon) }}{2} 
=2 \epsilon - \sqrt{1-4 (r - \epsilon)} > 0
\end{align*}
since $r > 1/4 + \epsilon - \epsilon^{2}$. 
Set $z_{1} = f_{c_{1}}(z_{0})$, then  $z_{1} < - R_{1}$. 
Let  $c_{n}=r  - \epsilon \in \B{-\epsilon}{r}$ for every $n \geq 2$. 
Since $z_{1} < - R_{1}$, the direct calculation shows $f_{c_{2}}(z_{1}) > R_{1}^{2} + r - \epsilon = R_{1}.$ 
Set $z_{2} = f_{c_{2}}(z_{1})$ and $\alpha = z_{2} - R_{1}$, then  $z_{2} = R_{1} + \alpha$ and $\alpha > 0$. 
We can show by induction that  $\iteration{f}{r - \epsilon }{n}(z_{2}) > R_{1} + \alpha + n \alpha^{2}$ for every $n \in \NN$.  
Hence, $\iteration{f}{r - \epsilon }{n}(z_{1})$ diverges to $\infty$ as $n \to \infty$. 
Thus, we have proved that there exist $z_{0} \in L$ and $\omega = (c_{n})_{n=1}^{\infty} \in \Omega_{-\epsilon, r}$ such that $\sample{\omega}{n} (z_{0}) \to \infty$ as $n \to \infty.$ 
This contradicts that $L$ is planar and forward invariant under $G_{-{\epsilon}, r}$. 
This completes the proof since $r$ is an arbitrary number that satisfies $r > 1/4 + \epsilon - \epsilon^{2}.$ 
\end{proof}

Note that we can show that $\dist ( -\epsilon, \partial \Mandel ) = 1/4 +\epsilon$ if  $0 < \epsilon \leq 1/4$ as follows. 
Assume $0 < \epsilon \leq 1/4$, 
then $\dist ( -\epsilon, \partial \Mandel ) \leq |-\epsilon - 1/4| = 1/4 +\epsilon$ since $1/4 \in \Mandel$. 
Let $C = \{ \lambda/2 -\lambda^{2}/4 \colon \lambda = e^{i \theta}, \theta \in \RR \}$.  
Then $C \subset \partial \Mandel$ and the domain bounded by $C$ is contained in the interior of $\Mandel$, see \cite[Theorem VIII.1.3]{CG}. Since 
\begin{align*}
\left|-\frac{1}{4} - \frac{e^{i \theta}}{2} + \frac{e^{2i \theta}}{4}\right|^{2}
&=
\left(-\frac{1}{4} - \frac{e^{i \theta}}{2} + \frac{e^{2i \theta}}{4}\right)
\left(-\frac{1}{4} - \frac{e^{-i \theta}}{2} + \frac{e^{-2i \theta}}{4}\right)\\
&= \frac{6 - (e^{2 i \theta} + e^{-2 i \theta})}{16} = \frac{6 - 2 \cos 2 \theta}{16}, 
\end{align*}
we have $\dist (-1/4, \partial \Mandel) = \min_{\theta \in \RR}\sqrt{6 - 2 \cos 2 \theta}/4 = 1/2$ and the minimum is attained at $\theta = 0, \pi$ which correspond to $c = 1/4, -3/4 \in \Mandel$ respectively.  
Since $\mathrm{int}  \B{-\epsilon}{1/4 +\epsilon} \subset \mathrm{int}  \B{-1/4}{1/2} \subset \mathrm{int} \Mandel$, 
we have $\dist ( -\epsilon, \partial \Mandel ) \geq 1/4 +\epsilon.$ 
Therefore, Theorem \ref{th:bifMainEstimate} reveals that strict inequality $\rbif(c) < \dist(c, \partial \mathcal{M})$ holds if $-1/4 \leq c < 0$. 

Combining the theorem above with Corollary \ref{cor:randomJuliaSetIsTotDisconn}, 
we have a little better result than Theorem \ref{th:LZ21}. 

\begin{cor}\label{cor:bifMainEstimate}
Suppose that $0 < \epsilon \leq 1/2.$ 
If $c=- \epsilon$ and $r > 1/4 + \epsilon - \epsilon ^{2}$, 
then the random Julia set $J_{\omega}$ is totally disconnected for $\PP_{-\epsilon, r}$-almost every $\omega \in \Omega_{-\epsilon, r}$. 
\end{cor}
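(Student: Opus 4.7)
The plan is to derive this corollary by directly chaining together two results already established in the paper: the quantitative upper bound on the bifurcation radius from Theorem \ref{th:bifMainEstimate} and the almost-sure total disconnectedness criterion from Corollary \ref{cor:randomJuliaSetIsTotDisconn}.

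First, I would invoke Theorem \ref{th:bifMainEstimate}, which, under the hypothesis $0 < \epsilon \leq 1/2$, gives the inequality
\[
\rbif(-\epsilon) \leq \tfrac{1}{4} + \epsilon - \epsilon^{2}.
\]
Next, assuming $r > 1/4 + \epsilon - \epsilon^{2}$, I would combine this with the displayed inequality to conclude $r > \rbif(-\epsilon)$, so that the noise amplitude exceeds the bifurcation radius at the center $c = -\epsilon$.

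Once $r > \rbif(-\epsilon)$ is established, I would apply Corollary \ref{cor:randomJuliaSetIsTotDisconn} with the parameters $(c, r) = (-\epsilon, r)$. That corollary asserts precisely that under this strict inequality, the random Julia set $J_{\omega}$ is totally disconnected for $\PP_{-\epsilon, r}$-almost every $\omega \in \Omega_{-\epsilon, r}$, which is the desired conclusion.

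Since both ingredients are already proved earlier in the paper, there is essentially no obstacle: the proof is a one-line deduction. The only minor subtlety is checking that the hypothesis $r > 1/4 + \epsilon - \epsilon^{2}$ is compatible with $r$ being a legitimate radius (i.e., nonnegative); but since $0 < \epsilon \leq 1/2$ implies $1/4 + \epsilon - \epsilon^{2} > 0$, this is immediate and requires no further comment.
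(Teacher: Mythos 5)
Your proposal is correct and is exactly the argument the paper intends: the corollary is stated immediately after the sentence ``Combining the theorem above with Corollary \ref{cor:randomJuliaSetIsTotDisconn}\dots'', i.e.\ one deduces $r > \rbif(-\epsilon)$ from Theorem \ref{th:bifMainEstimate} and then applies Corollary \ref{cor:randomJuliaSetIsTotDisconn}. No further comment is needed.
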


This shows that 
the random Julia set $J_{\omega}$ is totally disconnected for almost every $\omega$ 
even if $\B{c}{r} \subset \mathrm{int} \Mandel$ and $\B{c}{r} \not \supset \B{0}{1/4 }$. 
This is stronger than Theorem \ref{th:LZ21} proved by Lech and Zdunik.

\subsection{Outside the main cardioid}
Recall that a parameter $\tilde{c}$ is said to be superattracting if there exists $p \in \NN$ such that $\iteration{f}{\tilde{c}}{p}(0) = 0$. 
For example, $\tilde{c} = 0$ and $-1$ are superattracting parameters with $p = 1$ and $2$, respectively. 
In this subsection, we show the nontrivial estimates of $\rbif(-1)$.

\begin{lemma}\label{lem:-1invariant}
Let $c=-1$, $r \geq 0$, and $\delta \geq 0$. 
Denote $\overline{{D}}_{\delta} = \{z \in \mathbb{C} \colon |z| \leq \delta \}$. 
Then 
$f_{c_{2}} \circ f_{c_{1}}(\overline{{D}}_{\delta}) \subset \overline{{D}}_{\delta}$ for every $c_{1}, \ c_{2} \in \B{-1}{r}$ if and only if 

\begin{align}\label{ineq-1}
\delta^{4}+2(1+r)\delta^{2} + r^{2}  + 3r \leq \delta.
\end{align}
\end{lemma}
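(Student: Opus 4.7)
The plan is to expand the two-fold composition symbolically around the superattracting cycle $\{0,-1\}$ of $f_{-1}$ and apply the triangle inequality in a way that turns out to be tight. Writing $c_j = -1 + \alpha_j$ with $|\alpha_j|\leq r$ for $j=1,2$, a direct expansion gives
\begin{equation*}
f_{c_2}\circ f_{c_1}(z) \;=\; z^4 - 2z^2 + 2(z^2-1)\alpha_1 + \alpha_1^2 + \alpha_2,
\end{equation*}
in which the leading terms $z^4-2z^2$ are precisely the second autonomous iterate $f_{-1}^{\circ 2}(z)$ and the remainder captures the full perturbation in $\alpha_1,\alpha_2$.

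For the sufficiency direction, I would bound each summand termwise on $|z|\leq\delta$. Using $|z^4-2z^2|=|z|^2|z^2-2|\leq \delta^2(\delta^2+2)$, $|z^2-1|\leq \delta^2+1$, $|\alpha_1|^2\leq r^2$, and $|\alpha_2|\leq r$, the triangle inequality yields $|f_{c_2}\circ f_{c_1}(z)|\leq \delta^4 + 2(1+r)\delta^2 + r^2 + 3r$, which by hypothesis is $\leq\delta$. Hence the two-fold composition maps $\overline{D}_\delta$ into itself.

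For the necessity direction, I would exhibit the sharpness of the preceding estimate at a single explicit triple. Take $z_0=i\delta$, $c_1=-1-r$, $c_2=-1+r$; then $\alpha_1=-r$, $\alpha_2=r$, and $z_0^2=-\delta^2$. Each of the four summands in the expansion then becomes a nonnegative real number: $z_0^4-2z_0^2=\delta^4+2\delta^2$, $2(z_0^2-1)\alpha_1=2(\delta^2+1)r$, $\alpha_1^2=r^2$, and $\alpha_2=r$. Summing, $f_{c_2}\circ f_{c_1}(i\delta)$ equals exactly the nonnegative real number $\delta^4+2(1+r)\delta^2+r^2+3r$, and the assumed invariance of $\overline{D}_\delta$ forces this quantity to be $\leq\delta$.

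The only nontrivial step is spotting the correct extremal witness; there is no real ``obstacle''. The clue comes from the autonomous case $r=0$, where $|z^4-2z^2|\leq\delta^4+2\delta^2$ is attained exactly at $z=\pm i\delta$; choosing $c_1=-1-r$ and $c_2=-1+r$ is then precisely what aligns both first-order noise corrections constructively with that leading real term, so that every inequality in the termwise estimate becomes an equality simultaneously.
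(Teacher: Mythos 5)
Your proof is correct and follows essentially the same route as the paper: the necessity direction uses the identical extremal witness $z_0=i\delta$, $c_1=-1-r$, $c_2=-1+r$, and the sufficiency direction is the same triangle-inequality estimate, merely organized as a single expansion of $f_{c_2}\circ f_{c_1}$ rather than the paper's two-step bound via $|f_{c_1}(z)+1|\leq\delta^2+r$.
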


\begin{proof}
Assume that $f_{c_{2}} \circ f_{c_{1}}(\overline{{D}}_{\delta}) \subset \overline{{D}}_{\delta}$ for every $c_{1}, \ c_{2} \in \B{-1}{r}$. 
For $z = i \delta$, $c_{1} = -1 -r$, and $c_{2} = -1 +r$, 
we have $f_{c_{1}}(z) = - \delta^{2} -1-r$ and $f_{c_{2}} \circ f_{c_{1}}(z)  = \delta^{4} +2(1+r)\delta^{2} + r^{2}  + 3r.$ 
Hence, inequality (\ref{ineq-1}) holds. 

Conversely, assume that inequality (\ref{ineq-1}) holds. 
If $z \in \overline{{D}}_{\delta}$ and $c_{1} \in \B{-1}{r}$, 
then $|f_{c_{1}}(z) +1| \leq |z|^{2} + |c_{1}+1| \leq \delta^{2} + r$. 
Besides, if $|z_{1} + 1| \leq \delta^{2} + r$ and $c_{2} \in \B{-1}{r}$, 
then  $$|f_{c_{2}}(z_{1})| = |(z_{1} + 1)^{2} -2 z_{1} -2 + 1 + c_{2}| \leq |z_{1} +1|^{2} + 2|z_{1} +  1| + | 1 + c_{2} | \leq \delta^{4}+2(1+r)\delta^{2} + r^{2}  + 3r. $$ 
Thus, if inequality (\ref{ineq-1}) holds, then 
$f_{c_{2}} \circ f_{c_{1}}(\overline{{D}}_{\delta}) \subset \overline{{D}}_{\delta}$ for every $c_{1}, \ c_{2} \in \B{-1}{r}$. 
\end{proof}

\begin{lemma}\label{lem:equa-1}
Denote by $\delta_{\mathrm{max}} = 0.453\cdots$ the (positive real) root of $\delta^{3} + 2\delta-1$. 
Define the function $\rho \colon [0, \delta_{\mathrm{max}}] \to \RR$ by 
$$\rho (\delta) = - \frac{2 \delta^{2} + 3}{2} + \frac{\sqrt{4 \delta^{2} + 4\delta +9}}{2}.$$
Then, for every $\delta \in [0, \delta_{\mathrm{max}}] $, we have $\delta^{4}+2(1+\rho(\delta) )\delta^{2} + \rho(\delta) ^{2}  + 3\rho(\delta)  = \delta$ and $\rho(\delta) \geq 0.$ 
\end{lemma}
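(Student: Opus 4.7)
The plan is to treat the stated equation $\delta^{4} + 2(1+r)\delta^{2} + r^{2} + 3r = \delta$ as a quadratic in $r$, solve it by the quadratic formula, and check that $\rho(\delta)$ is the nonnegative branch precisely on $[0,\delta_{\max}]$.

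First I would rewrite the desired identity as
\[
r^{2} + (2\delta^{2}+3)\,r + (\delta^{4} + 2\delta^{2} - \delta) = 0,
\]
regarded as a quadratic in $r$ with coefficients depending on $\delta$. Computing the discriminant gives
\[
(2\delta^{2}+3)^{2} - 4(\delta^{4} + 2\delta^{2} - \delta) = 4\delta^{4} + 12\delta^{2} + 9 - 4\delta^{4} - 8\delta^{2} + 4\delta = 4\delta^{2} + 4\delta + 9,
\]
so the two roots are $r = \frac{-(2\delta^{2}+3) \pm \sqrt{4\delta^{2}+4\delta+9}}{2}$. The upper-sign root coincides with $\rho(\delta)$, which immediately establishes the claimed polynomial identity.

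Second I would verify the sign condition $\rho(\delta) \geq 0$. This is equivalent to $\sqrt{4\delta^{2}+4\delta+9} \geq 2\delta^{2}+3$, and since both sides are nonnegative we may square to obtain
\[
4\delta^{2} + 4\delta + 9 \;\geq\; 4\delta^{4} + 12\delta^{2} + 9,
\]
i.e.\ $\delta(1 - 2\delta - \delta^{3}) \geq 0$. For $\delta \geq 0$ this reduces to $\delta^{3} + 2\delta - 1 \leq 0$. Since $\delta \mapsto \delta^{3} + 2\delta - 1$ is strictly increasing on $[0,\infty)$ with a unique positive root $\delta_{\max}$, the inequality holds exactly on $[0,\delta_{\max}]$, which finishes the proof.

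No step looks genuinely difficult; the only thing to be careful about is the choice of the $+$ branch in the quadratic formula and the fact that the transition $\rho(\delta) = 0$ happens precisely at $\delta = \delta_{\max}$, which is consistent with the defining cubic $\delta^{3} + 2\delta - 1 = 0$. This matching of the endpoint is what justifies taking $\delta_{\max}$ (and not some other value) as the domain of $\rho$.
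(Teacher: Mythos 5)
Your proposal is correct and is essentially the same argument as the paper's: the paper verifies the identity by direct substitution and establishes $\rho(\delta)\geq 0$ by rationalizing to $2\rho(\delta) = -4\delta(\delta^{3}+2\delta-1)\big/\bigl((2\delta^{2}+3)+\sqrt{4\delta^{2}+4\delta+9}\bigr)$ and noting the cubic is increasing and negative at $0$, which is algebraically equivalent to your squaring step. Your quadratic-formula framing is a slightly more transparent way to see where $\rho$ comes from, but the computations coincide.
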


\begin{proof}
The former part $\delta^{4}+2(1+\rho(\delta) )\delta^{2} + \rho(\delta) ^{2}  + 3\rho(\delta)  = \delta$ follows from a straight calculation. 
We show $\rho(\delta) \geq 0.$ 
A simple calculation shows that 
$$2 \rho (\delta) = \frac{- ({2 \delta^{2} + 3})^{2} + 4 \delta^{2} + 4\delta +9}{(2 \delta^{2} + 3) + \sqrt{4 \delta^{2} + 4\delta +9}} 
= \frac{- 4 \delta (\delta^{3} + 2\delta-1)}{(2 \delta^{2} + 3) + \sqrt{4 \delta^{2} + 4\delta +9}}.$$ 
Define $\tilde{\rho}(\delta) = \delta^{3} + 2\delta-1,$ then $\tilde{\rho}(0) = -1 <0$ and $\tilde{\rho}'(\delta) = 3 \delta^{2} + 2 > 0$. 
Thus, $\rho(\delta) \geq 0$  for every $\delta \in [0, \delta_{\mathrm{max}}] $. 
\end{proof}

We now give the lower bound of $\rbif(-1),$ 
which is one of the main results of this paper.  

\begin{theorem}\label{th:lower-1}
Let $\rho$ be defined as in Lemma \ref{lem:equa-1}. 
Then $\rho$ takes the maximum value $r_{\mathrm{max}} = 0.0386\cdots$, and
hence  $r_{\mathrm{max}} \leq r_{\mathrm{bif}}(-1)$.  
\end{theorem}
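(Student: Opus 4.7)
The plan has two stages: a routine calculus determination of $r_{\max}$, followed by a geometric construction that exploits the fact that $c = -1$ is a superattracting parameter of period two.

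For the first stage, I would note that the computation inside the proof of Lemma \ref{lem:equa-1} already gives the factorization $\rho(\delta) = -2\delta(\delta^3 + 2\delta - 1)/\bigl((2\delta^2+3) + \sqrt{4\delta^2+4\delta+9}\bigr)$, which vanishes precisely at $\delta = 0$ and $\delta = \delta_{\max}$ and is strictly positive on $(0, \delta_{\max})$. Hence $\rho$ attains its maximum at some interior critical point $\delta_\ast$. Setting $\rho'(\delta) = 0$ reduces, after clearing the square root, to the equation $2\delta\sqrt{4\delta^2+4\delta+9} = 2\delta+1$, i.e.\ a single quartic in $\delta$, and numerical evaluation yields $r_{\max} = \rho(\delta_\ast) \approx 0.0386$.

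For the bifurcation bound I would set $r = r_{\max}$ and $\delta = \delta_\ast$, so that inequality $(\ref{ineq-1})$ holds with equality and Lemma \ref{lem:-1invariant} gives $f_{c_2}\circ f_{c_1}(\overline{D}_{\delta}) \subset \overline{D}_{\delta}$ for every $c_1, c_2 \in \B{-1}{r}$. The disk $\overline{D}_{\delta}$ itself is not $G_{-1,r}$-invariant, since single maps send it near $-1$, so I would enlarge it to
\[
L \;=\; \overline{D}_{\delta} \;\cup\; \bigcup_{c_1 \in \B{-1}{r}} f_{c_1}\bigl(\overline{D}_{\delta}\bigr).
\]
This $L$ is compact in $\CC$, being the union of $\overline{D}_{\delta}$ with the continuous image of the compact set $\B{-1}{r} \times \overline{D}_{\delta}$ under $(c_1, z) \mapsto z^2 + c_1$. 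A direct two-case check gives forward invariance under $G_{-1,r}$: any point in $\overline{D}_{\delta}$ is sent into the second piece by definition, while a point $z = f_{c_1}(z')$ in the second piece satisfies $f_{c_2}(z) = f_{c_2}\circ f_{c_1}(z') \in \overline{D}_{\delta}$ by the two-step invariance.

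Zorn's lemma then yields a minimal set of $G_{-1,r}$ contained in $L \subset \CC$, and being planar it forces $r_{\max} \leq \rbif(-1)$ by Definition \ref{def:bifRad}. The only subtle point is recognizing that Lemma \ref{lem:-1invariant} provides invariance under length-two compositions rather than single maps, which is why the symmetrization $L$ is necessary to produce a $G_{-1,r}$-forward invariant planar compact set; the rest is routine, with numerical location of $\delta_\ast$ the only computational ingredient.
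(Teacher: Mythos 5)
Your proposal is correct and follows essentially the same route as the paper: the identical calculus step (the critical point $\delta^{*}$ of $\rho$ located via the quartic $16\delta^{4}+16\delta^{3}+32\delta^{2}-4\delta-1=0$, giving $r_{\max}=\rho(\delta^{*})\approx 0.0386$), followed by an appeal to Lemmas \ref{lem:-1invariant} and \ref{lem:equa-1} to produce a bounded forward-invariant set and hence a planar minimal set of $G_{-1,r_{\max}}$. In fact your handling of the invariance step is more careful than the paper's own wording: the paper asserts that $\overline{D}_{\delta}$ itself is forward invariant under $G_{-1,\rho(\delta)}$, which is literally false for single generators (each $f_{c_{1}}$ with $c_{1}\in\B{-1}{r}$ maps $\overline{D}_{\delta}$ to a neighborhood of $-1$, far outside $\overline{D}_{\delta}$), and your enlarged compact set $L=\overline{D}_{\delta}\cup\bigcup_{c_{1}\in\B{-1}{r}}f_{c_{1}}(\overline{D}_{\delta})$, together with the two-case check and Zorn's lemma, is exactly the patch needed to make the argument rigorous.
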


\begin{proof}
We have that the derivative $\rho'(\delta) = -2 \delta +(2 \delta + 1)/\sqrt{4 \delta^{2} + 4 \delta + 9}.$ 
Thus, $\rho'(\delta) = 0$ if and only if $16 \delta^4 + 16 \delta^3 + 32\delta^2 - 4 \delta - 1 = 0$. 
Let $\delta^{*} =  0.229\cdots $ be the unique positive root of this quartic equation. 
Then, $\rho(\delta)$ takes the maximum value at $\delta = \delta^{*}$. 
A numerical computation shows that  the maximum value $r_{\mathrm{max}}$ is approximately $0.0386\cdots.$ 

By Lemmas \ref{lem:-1invariant} and \ref{lem:equa-1},  
for every $\delta \in [0, \delta_{\mathrm{max}}] $, 
the set $\overline{{D}}_{\delta} $ is forward invariant under the polynomial semigroup $G_{-1, \rho(\delta)}$. 
Thus, we have $\rho(\delta) \leq r_{\mathrm{bif}}(-1)$.  
In particular, $r_{\mathrm{max}} = \rho(\delta^{*}) \leq r_{\mathrm{bif}}(-1)$.  
This completes the proof. 
\end{proof}

Next, we consider the upper bound of $\rbif(-1).$ 
The following theorem gives upper bounds, 
whose proof suggests a kind of parabolic implosion in the stochastic sense. 

\begin{theorem}\label{th:upperBoundWithSupattrAssumption}
Suppose that  $ \B{c}{r_{0}}$ contains a superattracting parameter $\tilde{c}$. 
If there exist $N \in \NN$ and $c_{1}, c_{2} \dots, c_{N} \in \B{c}{r_{0}}$ such that $f_{c_{N}} \circ \dots  \circ f_{c_{2}} \circ f_{c_{1}}$ has a parabolic periodic point, 
then  $r_\mathrm{bif}(c) \leq r_{0}$. 
\end{theorem}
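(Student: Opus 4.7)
I plan to argue by contradiction, using Theorem~\ref{th:BififfTotDisconn} together with the classical fact (due to Fatou) that every parabolic cycle of a polynomial attracts at least one critical orbit. If $c \notin \mathcal{H}$, then $\rbif(c) = 0 \le r_0$ is immediate from Definition~\ref{def:bifRad}, so I may assume $c \in \mathcal{H}$. Suppose for contradiction that $\rbif(c) > r_0$ and fix any $r$ with $r_0 < r < \rbif(c)$. Because $\tilde c \in \B{c}{r_0} \subset \mathrm{int}\,\B{c}{r}$, Theorem~\ref{th:BififfTotDisconn} applies at radius $r$, and its implication $(1) \Rightarrow (4)$ shows that
\[
L := \overline{\bigcup_{h \in G_{c, r}} \{h(0)\}}
\]
is a planar minimal set of $G_{c, r}$; by Theorem~\ref{th:listOfMinimalSets} it is in fact attracting, so $L \subset F(G_{c, r})$ by Definition~\ref{def:attractingMinSet}.

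The strategy is then to force the parabolic periodic point $z_0$ of $g := f_{c_N} \circ \cdots \circ f_{c_1}$ into $L$. Since $g \in G_{c, r_0} \subset G_{c, r}$, we have $z_0 \in J(g) \subset J(G_{c, r})$, which is disjoint from $L$; this is the contradiction. To obtain $z_0 \in L$, I proceed in four short steps. First, the superattracting identity $f_{\tilde c}^{\circ p}(0) = 0$ together with $f_{\tilde c}^{\circ p} \in G_{c, r}$ gives $0 \in L$, and each $c_i = f_{c_i}(0)$ therefore lies in $L$ as well. Second, setting $g_j := f_{c_j} \circ \cdots \circ f_{c_1}$ with $g_0 = \mathrm{id}$, the chain rule yields $g'(z) = 2^N \prod_{j=0}^{N-1} g_j(z)$, so every critical point of $g$ is a zero of some $g_j$, and the corresponding critical value $(f_{c_N} \circ \cdots \circ f_{c_{j+1}})(0)$ belongs to $L$ because this composition lies in $G_{c, r}$. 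Third, forward invariance of $L$ under $g$ implies that the full forward $g$-orbit of every critical value of $g$ stays in the closed set $L$. Fourth, Fatou's theorem applied to $g$ produces a critical orbit of $g$ whose closure contains the parabolic cycle of $z_0$; since that orbit is contained in $L$ from the first iterate on and $L$ is closed, the parabolic cycle lies in $L$, and in particular $z_0 \in L$.

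The main obstacle is really just bookkeeping: correctly computing the critical values of the composition $g$ and invoking Fatou's classical theorem in the parabolic case. The superattracting hypothesis on $\tilde c$ is used precisely to anchor $0 \in L$, which sets off the chain of inclusions placing each $c_i$, each critical value, and ultimately $z_0$ in $L$. Intuitively, this is a parabolic implosion in the stochastic setting: the parabolic cycle of $g$ inside the semigroup forces the orbit of $0$, tethered to $L$, to absorb the parabolic point, which is incompatible with $L$ lying in the Fatou set of $G_{c, r}$.
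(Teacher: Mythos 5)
Your proof is correct, but it closes the argument by a genuinely different mechanism than the paper. Both arguments share the same core: Fatou's theorem supplies a critical point of $g=f_{c_N}\circ\cdots\circ f_{c_1}$ attracted to the parabolic cycle, and the chain rule $g'(z)=2^{N}\prod_{j=0}^{N-1}g_j(z)$ identifies every finite critical value of $g$ as $h(0)$ for some word $h=f_{c_N}\circ\cdots\circ f_{c_{j+1}}$ of the semigroup. The paper then runs a direct perturbation (a ``stochastic parabolic implosion''): the critical orbit of $0$ lands within $\epsilon=r-r_0$ of the parabolic point $z^{*}\in\partial A(g)$, so perturbing the last parameter inside $\B{c}{r}$ pushes it into $A(g)$; hence $0$ escapes along some $\omega\in\Omega_{c,r}$ and Lemma~\ref{lem:cond} yields $r>\rbif(c)$ for every $r>r_0$. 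You instead argue by contradiction through Theorem~\ref{th:BififfTotDisconn} and Theorem~\ref{th:listOfMinimalSets}: for $r_0<r<\rbif(c)$ the set $L=\overline{\bigcup_{h\in G_{c,r}}\{h(0)\}}$ is the attracting minimal set, hence contained in $F(G_{c,r})$; forward invariance traps the forward $g$-orbit of every critical value in the closed set $L$, and Fatou forces the parabolic cycle into $L$, contradicting $z_0\in J(g)\subset J(G_{c,r})$. Your route avoids the perturbation step entirely at the cost of invoking heavier machinery already established earlier in the paper, so there is no circularity. Two small points worth making explicit: the inclusion $J(g)\subset J(G_{c,r})$ (immediate since $\{g^{\circ n}\}_{n\geq 1}\subset G_{c,r}$), and the observation that your first step ($0\in L$) is not actually needed, since each critical value $(f_{c_N}\circ\cdots\circ f_{c_{j+1}})(0)$ lies in $L$ directly by the definition of $L$; the superattracting hypothesis is consumed entirely by the hypotheses of Theorem~\ref{th:BififfTotDisconn}.
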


\begin{proof}
Fix $r > r_{0}$ and apply Lemma \ref{lem:cond}. 
Set $g = f_{c_{N}} \circ \dots  \circ f_{c_{2}} \circ f_{c_{1}}$ and  
let $z^{*}$ denote the parabolic periodic point of $g$.  
Then there exists  $p\in \NN$ such that$ \iteration{g}{}{p}(z^{*}) = z^{*}$, and the multiplier of $\iteration{g}{}{p}$ at  $z^{*}$ is $1.$ 
Thus, there exists a critical point $z_{0}$ of $g$ such that $\iteration{g}{}{np}(z_{0}) \to z^{*}$ as $n \to \infty.$ 
Since $g = f_{c_{N}} \circ \dots  \circ f_{c_{2}} \circ f_{c_{1}}$, there exists $0 \leq k \leq N-1$ such that $f_{c_{k}} \circ \dots  \circ f_{c_{2}} \circ f_{c_{1}} (z_{0}) = 0$ by the chain rule. 
Here, if $k =0 $, then we define $f_{c_{k}} \circ \dots  \circ f_{c_{2}} \circ f_{c_{1}}$ to be the identity map. 
In any case, we have $\iteration{g}{}{np-1} \circ \random{c_{N}}{c_{k+2}}{c_{k+1}}(0) \to z^{*}$ as $n \to \infty.$ 
Now, let $\epsilon = r - r_{0} > 0$, then there exists $n_{0} \in \NN$ such that $n_{0} \geq 2$ and 
$$|\iteration{g}{}{n_{0}p-1} \circ \random{c_{N}}{c_{k+2}}{c_{k+1}}(0) - z^{*} | < \epsilon.$$ 
Set $w = \iteration{g}{}{n_{0}p-1} \circ \random{c_{N}}{c_{k+2}}{c_{k+1}}(0)$.  
Since the parabolic periodic point $z^{*}$ belongs to the autonomous Julia set $J(g)$ of $g$ and the Julia set is the boundary of the basin at infinity, 
perturbation of $c_{N}$ allows us to find a random orbit which escapes to the basin at infinity. 
Namely, there exists $c' \in \B{c_{N}}{\epsilon}$ such that $$w' = f_{c'} \circ f_{c_{N-1}} \circ \dots  \circ f_{c_{2}} \circ f_{c_{1}} \circ \iteration{g}{}{n_{0}p-2} \circ \random{c_{N}}{c_{k+2}}{c_{k+1}}(0) \in A(g),$$ 
where $A(g)$ denotes the basin at infinity for the autonomous iteration of $g$. 
Note that $c' \in \B{c}{r}$ since $c_{N} \in \B{c}{r_{0}}$ and $\epsilon = r - r_{0}$. 
By definition, 
we have $\iteration{g}{}{n}(w') \to \infty$ as $n \to \infty$. 
Thus,  Lemma \ref{lem:cond} gives that $\rbif(c) < r.$ 
This completes the proof since $r$ is an arbitrary number that satisfies $r >r_{0}.$  
\end{proof}

Theorem \ref{th:upperBoundWithSupattrAssumption} gives an alternative proof of $\rbif(0) \leq 1/4$ 
since $f_{1/4}$ has a parabolic fixed point at $z =1/2$. 

The following gives an algorithm to find an upper bound of the bifurcation radius. 

\begin{theorem}\label{th:algorithm}
Suppose $c$ is a superattracting parameter. 
Fix $N \in \NN$ and $k \in \NN$, and   
let $p_{i} \in \{0, 1, \dots, k-1\} $ for each $i =1, 2, \dots, N.$ 
Define $\zeta_{k} = e^{2 \pi i/k}$ and  $c_{i} = c + \rho \zeta_{k}^{p_{i}},$ 
where $\rho$ is a complex variable.  
Let $\Delta (\rho)$ be the discriminant of the algebraic equation $f_{c_{N}} \circ \dots  \circ f_{c_{2}} \circ f_{c_{1}}(z) - z =0$ in the variable $z$, which is a polynomial in the variable $\rho$. 
If $\Delta(\rho_{0}) = 0$ for some $\rho_{0},$ then $r_\mathrm{bif}(c) \leq |\rho_{0}|.$ 
\end{theorem}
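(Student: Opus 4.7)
The plan is to derive the statement as a direct application of Theorem \ref{th:upperBoundWithSupattrAssumption}, with the choice $r_{0} = |\rho_{0}|$ and $\tilde{c} = c$. One hypothesis is immediate: since $c$ is assumed to be superattracting, the disk $\B{c}{|\rho_{0}|}$ already contains a superattracting parameter. Moreover, the candidate parameters $c_{i} = c + \rho_{0} \zeta_{k}^{p_{i}}$ automatically lie on the boundary of this disk, since $|c_{i} - c| = |\rho_{0}|\cdot |\zeta_{k}^{p_{i}}| = |\rho_{0}|$. Thus the only remaining hypothesis to verify is that the composition $g = f_{c_{N}}\circ \dots \circ f_{c_{1}}$ admits a parabolic periodic point; a parabolic fixed point will suffice.

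The bridge between the vanishing of the discriminant and this parabolicity is the following elementary fact. The polynomial $P(z) := g(z) - z$ has degree $2^{N}$ with leading coefficient $1$, so its leading term does not vanish, and consequently $\Delta(\rho_{0}) = 0$ is equivalent to $P$ having a multiple root $z^{*}\in \CC$. Expanding the two conditions $P(z^{*}) = 0$ and $P'(z^{*}) = 0$ gives $g(z^{*}) = z^{*}$ and $g'(z^{*}) = 1$, which is exactly the statement that $z^{*}$ is a parabolic fixed point of $g$ (with multiplier $1$). With this in hand, Theorem \ref{th:upperBoundWithSupattrAssumption} applies verbatim to the parameter sequence $c_{1},\dots,c_{N}$ and yields $\rbif(c) \leq r_{0} = |\rho_{0}|$, as claimed.

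I do not expect a substantive obstacle. The one point that deserves a moment of care is simply distinguishing that ``multiple root of $g(z)-z$'' captures multiplier exactly $1$ for a fixed point (rather than a higher-period cycle or a root-of-unity multiplier at some iterate); this matches the notion of parabolic periodic point used in Theorem \ref{th:upperBoundWithSupattrAssumption}, where the proof only requires producing a parabolic periodic point in order to invoke parabolic-implosion-style perturbation of the last symbol $c_{N}$. Hence no refinement of the cited theorem is needed.
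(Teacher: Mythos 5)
Your proposal is correct and follows essentially the same route as the paper: the vanishing of the discriminant of the monic polynomial $g(z)-z$ yields a multiple root, hence a parabolic fixed point of $g$ with multiplier $1$, and Theorem \ref{th:upperBoundWithSupattrAssumption} (applicable since $c$ itself is the superattracting parameter in $\B{c}{|\rho_{0}|}$) gives $\rbif(c)\leq |\rho_{0}|$. Your extra remarks on the leading coefficient and on $|c_{i}-c|=|\rho_{0}|$ are correct fillings-in of details the paper leaves implicit.
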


\begin{proof}
Under the assumptions, if $\rho = \rho_{0},$ then 
$f_{c_{N}} \circ \dots  \circ f_{c_{2}} \circ f_{c_{1}}(z) - z =0$ has a multiple root. 
This implies that the map $f_{c_{N}} \circ \dots  \circ f_{c_{2}} \circ f_{c_{1}}$ has a parabolic fixed point with multiplier $1.$ 
Since $c_{1}, c_{2} \dots, c_{N} \in \B{c}{r_{0}}$ with $r_{0} = |\rho_{0}|,$ we have $r_\mathrm{bif}(c) \leq r_{0}$ by Theorem \ref{th:upperBoundWithSupattrAssumption}. 
\end{proof}

We now give upper bounds of $\rbif(-1)$. 


\begin{example}\label{ex:?}
Suppose $c=-1.$ 
Let $N = 4$, $k =6,$ and $(p_{1}, p_{2}, p_{3}, p_{4}) = (1, 2, 5,4 ).$ 
Then numerical calculation shows the following result. 
The discriminant $\Delta(\rho)$ of $f_{c_{4}} \circ f_{c_{3}}  \circ f_{c_{2}} \circ f_{c_{1}}(z) -z $ 
vanishes at  $\rho_{0} \approx 0.0399 \cdots$, 
and hence $\rbif(-1) \leq 0.0399 \cdots$. 
\end{example}

We have some comments on the upper bounds of $\rbif(-1).$ 

\begin{remark}\label{rem:NumComp}
In  Example \ref{ex:?}, the numerical errors should be treated carefully 
because 
the discriminant of the equation $f_{c_{4}} \circ f_{c_{3}}  \circ f_{c_{2}} \circ f_{c_{1}}(z) - z =0$ with respect to $z$ 
is a polynomial $\Delta({\rho})$ of degree $32$, and 
each coefficient is a very large number in absolute value, i.e., 
about $10^{19}$-$10^{26}$ order.  
The author used validated numerics to estimate $\rho_{0}$, which showed $\rbif(-1) \leq |\rho_{0}| \leq 0.0399217$. 
Since validated numerics gives values including mathematically strict error evaluation, 
this value $0.0399217$ is a rigorous upper bound of $\rbif(-1).$ 
For more details on validated numerics, see \cite{PP} for example. 

Note also that  all the $6^{4}$ candidates 
$(p_{1}, p_{2}, p_{3}, p_{4}) \in  \{0,  1, 2, 3, 4, 5\}^{4}$ were (non-rigorously) verified. 
That is, for every $(p_{1}, p_{2}, p_{3}, p_{4}) \in  \{0,  1, 2, 3, 4, 5\}^{4}$, 
the discriminant $\Delta({\rho})$ of the equation $f_{c_{4}} \circ f_{c_{3}}  \circ f_{c_{2}} \circ f_{c_{1}}(z) - z =0$ was computed using $c_{1}, \dots, c_{4}$ as in Theorem \ref{th:algorithm}, 
and we numerically computed the roots of $\Delta({\rho})$. 
This computation shows that $24$ candidates $(p_{1}, p_{2}, p_{3}, p_{4})$ including $(1, 2, 5,4 )$ attain the same minimum absolute value $|\rho_{0}| \approx 0.0399 \cdots$. 
Also, the author  replaced $\zeta_{6}$ by $\zeta_{4} = e^{2 \pi i/4} = i$, and computed  the roots of $\Delta({\rho})$ in the same manner, 
which gave less sharp estimates. 
\end{remark}

We now give the upper bound for another parameter. 

\begin{example}\label{ex:airplane}
Suppose $\tilde{c_{3}} \approx -1.75487766$ is the airplane parameter
 such that $\tilde{c_{3}}$ is real number and  $f_{\tilde{c_{3}}}$ has a superattracting periodic point with period $3$. 
 Let $N = 3$, $k =6,$ and $(p_{1}, p_{2}, p_{3}) = (0, 3, 0).$ 
Then a similar numerical computation shows that $\rbif(\tilde{c_{3}}) \leq 0.0021.$  
\end{example}

\begin{remark}\label{rem:bif-1issmall}
Recall that we can calculate the connected component $W$ of $\mathrm{int} \mathcal{M}$ which contains $-1$. 
The component $W$ is the open disk with center $-1$ and radius $1/4,$ thus $\dist (-1, \partial \Mandel) = 1/4.$
For $c=-1$, we have $\rbif(-1) \ll \dist (-1, \partial \Mandel)$ by the example above. 
Similarly, $\rbif(\tilde{c_{3}}) \ll \dist (\tilde{c_{3}}, \partial \Mandel)$ since  $ \dist(\tilde{c_{3}}, \partial \Mandel) \approx 0.00487766$ for  the airplane parameter $\tilde{c_{3}}$. 
See \cite{GF95} for the explicit parametrization of period-$3$ hyperbolic components. 
This strict inequality is in contrast with the equality $\rbif(0) = \dist(0, \partial \Mandel)$. 
\end{remark}

\begin{cor}\label{cor:TotDisconn}
Almost every random Julia set is totally disconnected 
\begin{itemize}
\item if the central parameter $c=-1+ \epsilon $ and the noise amplitude  $r > 0.0399 \cdots+ |\epsilon|$, or 
\item if the central parameter $c =\tilde{c_{3}} + \epsilon $ and the noise amplitude  $r > 0.0021 + |\epsilon |.$ 
\end{itemize}
\end{cor}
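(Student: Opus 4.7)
The plan is to combine the 1-Lipschitz continuity of $\rbif$ from Theorem \ref{th:1-Lip} with the numerical upper bounds in Examples \ref{ex:?} and \ref{ex:airplane}, and then invoke Corollary \ref{cor:randomJuliaSetIsTotDisconn} to transfer bifurcation information into a statement about total disconnectedness of random Julia sets.

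First, for the case $c = -1 + \epsilon$, I would apply Theorem \ref{th:1-Lip} to write
\[
\rbif(-1 + \epsilon) \;\leq\; \rbif(-1) + |\epsilon| \;\leq\; 0.0399\cdots + |\epsilon|,
\]
where the second inequality uses Example \ref{ex:?}. Hence the hypothesis $r > 0.0399\cdots + |\epsilon|$ implies $r > \rbif(-1+\epsilon)$. Then Corollary \ref{cor:randomJuliaSetIsTotDisconn} applied to the center $c = -1+\epsilon$ and radius $r$ immediately yields that $J_\omega$ is totally disconnected for $\mathbb{P}_{c,r}$-almost every $\omega \in \Omega_{c,r}$.

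Second, for the case $c = \tilde{c_3} + \epsilon$, the argument is identical, replacing Example \ref{ex:?} with Example \ref{ex:airplane}: we get $\rbif(\tilde{c_3} + \epsilon) \leq \rbif(\tilde{c_3}) + |\epsilon| \leq 0.0021 + |\epsilon|$, so the hypothesis $r > 0.0021 + |\epsilon|$ again gives $r > \rbif(\tilde{c_3}+\epsilon)$, and Corollary \ref{cor:randomJuliaSetIsTotDisconn} finishes the proof.

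There is essentially no obstacle here: the corollary is a purely formal consequence of two already-proved ingredients (the Lipschitz estimate and the previously derived total-disconnectedness criterion based on $r > \rbif(c)$), together with the numerical upper bounds for $\rbif(-1)$ and $\rbif(\tilde{c_3})$. The only subtle point worth a brief remark is that the numerical values $0.0399\cdots$ and $0.0021$ should be treated as rigorous upper bounds in the sense discussed in Remark \ref{rem:NumComp} (via validated numerics), so that the inequality $\rbif(c) \leq 0.0399\cdots$ or $\rbif(c) \leq 0.0021$ is mathematically certified rather than merely numerical; once that is granted, the chain of implications above is immediate.
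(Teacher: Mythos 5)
Your proposal is correct and follows essentially the same route as the paper: the paper likewise combines the $1$-Lipschitz estimate of Theorem \ref{th:1-Lip} with the numerical upper bounds of Examples \ref{ex:?} and \ref{ex:airplane}, and then invokes the criterion that $r > \rbif(c)$ forces almost sure total disconnectedness. The only cosmetic difference is that the paper cites Theorem \ref{th:BififfTotDisconn} for that last step, whereas you cite Corollary \ref{cor:randomJuliaSetIsTotDisconn}, which is an equivalent (and, if anything, slightly more direct) reference since it carries no superattracting hypothesis.
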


\begin{proof}
By Theorem \ref{th:BififfTotDisconn}, 
$\PP_{c, r}$-almost every random Julia set is totally disconnected if $r > \rbif(c).$ 
Besides,  we have 
$\rbif(c + \epsilon) \leq \rbif(c) + |\epsilon|$ for every $\epsilon \in \CC$ 
by Theorem \ref{th:1-Lip}. 
Thus, the conclusion follows from Examples  \ref{ex:?} and \ref{ex:airplane}. 
\end{proof}

\subsection{Application to quasiconformal conjugacy}
By combining Theorem \ref{th:BififfTotDisconn} and the theory of non-autonomous holomorphic motions by Comerford, 
we can deduce that non-autonomous Julia sets are quasiconformally conjugate to the autonomous Julia set. 
In this subsection, we see an overview of this conjugacy. 
For a more detailed meaning, the reader is referred to \cite{Com08}. 

For $\omega = (c_{1}, c_{2}, \dots) \in \CC^{\NN},$ 
we denote $\sigma^{m}\omega = (c_{m+1}, c_{m+2}, \dots)$ for every $m \geq 0.$ 

\begin{definition}
Let $\delta > 0$ and let $\omega \in \CC^{\NN}$  be a bounded sequence of parameters. 
We say that $\omega$ has post-critical distance $\geq \delta$ if 
$$\inf_{m \geq 0, n\geq m} \dist (\sample{\sigma^{m} \omega}{n - m}(0), J_{\sigma^{n} \omega}) \geq \delta.$$
Here $\sample{\omega}{0}$ is understood as the identity map for every $\omega$. 
\end{definition}

The sequence $\omega$ has post-critical distance $\geq \delta$ for some $\delta > 0$ 
if and only if non-autonomous dynamics is uniformly expanding (hyperbolic)
if and only if non-autonomous dynamics is uniformly contractive at critical points. 
See \cite[Theorem 3.3]{Com08} for the detail. 

We next define hyperbolic components by analogy with the deterministic case. 
Denote $\ell^{\infty}(\CC) = \{ \omega = (c_{n})_{n=1}^{\infty} \colon \sup_{n}|c_{n}| < \infty \}.$ 

\begin{definition}
Every connected component of 
$$\{\omega \in \ell^{\infty}(\CC) \colon \omega \text{ has post-critical distance $\geq \delta$ for some $\delta > 0$}\}$$
 is called a hyperbolic component. 
\end{definition}

After formulating the concepts above, Comerford proved the following theorem \cite[Theorem 6.1]{Com08}. 

\begin{theorem}\label{th:Com}
Suppose that $\omega = (c_{n})_{n=1}^{\infty}$ and $\omega' = (c'_{n})_{n=1}^{\infty}$ lie in the same hyperbolic component. 
Then there exist $K \geq 1$ and a sequence of maps $\{\varphi_{n}\}_{n \geq 0}$
such that 
 $\varphi_{n}$ maps $J_{\sigma^{n} \omega}$ onto $J_{\sigma^{n} \omega'}$ $K$-quasiconformally and 
 $\varphi_{n+1}\circ f_{c_{n+1}}=  f_{c'_{n+1}}  \circ \varphi_{n}$ on $J_{\sigma^{n} \omega}$ for every $n \geq 0$. 
 
 We say that $\omega$ and $\omega'$ are quasiconformally conjugate on their iterated Julia sets 
 if such a sequence $\{\varphi_{n}\}_{n \geq 0}$ exists. 
\end{theorem}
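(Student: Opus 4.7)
The plan is to build the conjugating sequence $\{\varphi_n\}$ by adapting the Mañé-Sad-Sullivan structural stability argument to the non-autonomous setting: construct a non-autonomous holomorphic motion of the Julia sets $J_{\sigma^n \omega_t}$ along a path joining $\omega$ to $\omega'$, then apply the $\lambda$-lemma to extend it with uniform quasiconformal dilatation. First, I would connect $\omega$ and $\omega'$ by a continuous path $t \mapsto \omega_t = (c_n(t))_n$, $t \in [0,1]$, inside the hyperbolic component, arranging by compactness and the local openness of the hyperbolicity condition that every $\omega_t$ has post-critical distance $\geq \delta$ for a single $\delta > 0$. A mollification in $t$ upgrades this to a family $(c_n(t))_n$ holomorphic in a complex neighborhood of $[0,1]$, still satisfying the uniform post-critical distance bound.

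Next, I would use the equivalence stated after the definition of hyperbolic components to obtain constants $C>0$, $\lambda>1$, independent of $t$ and $m$, with $|(\sample{\sigma^m \omega_t}{n})'(z)| \geq C\lambda^n$ on $J_{\sigma^m \omega_t}$. Using this, construct non-autonomous Böttcher coordinates $B_n^t : A_{\sigma^n \omega_t} \to \CC \setminus \overline{\mathbb{D}}$ by the standard limiting recipe
\[
B_n^t(z) \;=\; \lim_{k \to \infty}\bigl(\sample{\sigma^n \omega_t}{k}(z)\bigr)^{1/2^k}
\]
valid near infinity, and extended inward by the functional equation $B_{n+1}^t \circ f_{c_{n+1}(t)} = (B_n^t)^2$. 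These depend holomorphically on $t$ because the coefficients $c_n(t)$ do, and extend continuously across $J_{\sigma^n \omega_t}$ thanks to Koebe-type distortion estimates on backward branches, which remain uniform precisely because of the post-critical distance lower bound.

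Defining $h_n^t(z) = (B_n^t)^{-1}(B_n^0(z))$ on $A_{\sigma^n \omega}$ and extending by continuity to $J_{\sigma^n \omega}$ yields, for each fixed $z \in J_{\sigma^n \omega}$, a holomorphic motion $t \mapsto h_n^t(z) \in J_{\sigma^n \omega_t}$. The $\lambda$-lemma then extends each $h_n^t$ to a $K(t)$-quasiconformal motion of $\rs$, with $K(t)$ controlled by the hyperbolic distance from $t$ to $0$ in the parameter neighborhood and thus independent of $n$. Setting $\varphi_n = h_n^1$ and reading off the functional equation gives $\varphi_{n+1} \circ f_{c_{n+1}} = f_{c'_{n+1}} \circ \varphi_n$ on $J_{\sigma^n \omega}$, as required.

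The hard part will be Step~2: establishing the continuous extension of the Böttcher maps across the Julia set with distortion bounds uniform in $n$ and $t$. With no periodic structure available in the non-autonomous world, puzzle-piece or Yoccoz-style arguments are unavailable, and one must instead squeeze uniform Koebe distortion along every backward branch of $\sample{\sigma^n \omega_t}{k}$ from the post-critical distance, comparing the spherical metric with the hyperbolic metric of $\rs$ minus a thickening of the post-critical set. Securing this control uniformly in both indices is the technical heart of the construction; once in hand, the rest of the argument runs in parallel with the classical autonomous proof.
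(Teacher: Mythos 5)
First, note that the paper does not prove this statement at all: it is quoted from Comerford \cite[Theorem 6.1]{Com08} (a paper whose very title is ``Holomorphic motions of hyperbolic nonautonomous Julia sets''), so your holomorphic-motion-plus-$\lambda$-lemma strategy is at least pointed in the same direction as the actual source. Nevertheless, two steps of your outline are genuinely problematic. The first is the claim that ``a mollification in $t$ upgrades'' a continuous path in the hyperbolic component to a family $(c_{n}(t))_{n}$ holomorphic in a complex neighborhood of $[0,1]$. Mollifying a continuous function of a real variable produces a smooth function of a real variable; it does not produce a holomorphic extension to a complex neighborhood, and no such extension exists for a generic path. The standard repair is to use that the hyperbolic component is open in $\ell^{\infty}(\CC)$: cover a path from $\omega$ to $\omega'$ by finitely many balls, interpolate affinely (hence holomorphically) between consecutive waypoints, build a holomorphic motion over each small complex parameter disk, and compose the finitely many resulting quasiconformal conjugacies, with $K$ the product of the local dilatations. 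Without some such device the $\lambda$-lemma has no holomorphic parameter to act on.

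The second problem is the reliance on non-autonomous B\"ottcher coordinates to produce the motion. That construction presupposes that $B_{n}^{t}$ extends from a neighborhood of $\infty$ all the way down to $J_{\sigma^{n}\omega_{t}}$, which requires the critical orbits to stay in the filled Julia sets, i.e., the Julia sets to be connected; and even then the Carath\'eodory-type boundary extension is a continuous surjection that is injective only modulo the lamination, so $h_{n}^{t}=(B_{n}^{t})^{-1}\circ B_{n}^{0}$ is not automatically a well-defined bijection of $J_{\sigma^{n}\omega}$ onto $J_{\sigma^{n}\omega_{t}}$. The theorem, however, is stated for arbitrary sequences in a hyperbolic component, and the post-critical distance condition is satisfied in particular by sequences with escaping critical points (cf.\ \cite{com11}), whose Julia sets are disconnected and for which no B\"ottcher coordinate reaches the Julia set. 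The construction that actually works builds the motion directly on the Julia set from the uniformly expanding dynamics --- moving backward orbits of base points holomorphically in the parameter and using the uniform post-critical distance to obtain univalent inverse branches with uniform Koebe distortion --- rather than passing through the basin of infinity. Your final paragraph correctly identifies where the analytic work lies, but the route through $B_{n}^{t}$ should be replaced by this direct pullback construction if the statement is to be proved in the stated generality.
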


More precisely, the sequence $\{\varphi_{n}\}_{n \geq 0}$ is bi-equicontinuous,  
as defined in \cite[Section 4]{Com08}. 
Using the theorem above, we show the new results. 

\begin{theorem}\label{th:qc}
Suppose that the interior of $ \B{c}{r}$ contains a superattracting parameter $\tilde{c}$ and   
suppose $r < \rbif (c)$.
Then every two $\omega, \omega' \in \B{c}{r}^{\NN}$ are quasiconformally conjugate on their iterated Julia sets. 
\end{theorem}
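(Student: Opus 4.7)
The plan is to apply Comerford's quasiconformal conjugacy result (Theorem \ref{th:Com}), which requires showing that every two sequences $\omega, \omega' \in \B{c}{r}^{\NN}$ lie in the same hyperbolic component of $\ell^{\infty}(\CC)$. This splits into two sub-tasks: establishing a uniform lower bound on the post-critical distance for every $\omega \in \B{c}{r}^{\NN}$, and connecting $\omega$ to $\omega'$ by a continuous path in $\ell^{\infty}(\CC)$ that never leaves the ``hyperbolic'' set.

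For the first sub-task, I would exploit the structure of minimal sets under the strict inequality $r < \rbif(c)$. By Theorem \ref{th:listOfMinimalSets} the semigroup $G_{c, r}$ has a unique attracting planar minimal set $L$, and by Theorem \ref{th:BififfTotDisconn} together with Lemma \ref{lem:boundedattractor} this equals $L = \overline{\bigcup_{g \in G_{c,r}}\{g(0)\}}$. Since $\tilde{c}$ is superattracting of some period $p$, the map $f_{\tilde{c}}^{\circ p}$ lies in $G_{c,r}$ and fixes the critical point $0$, so $0 \in L$. Being attracting, $L$ is compactly contained in $F(G_{c,r})$, which gives $\delta := \dist(L, J(G_{c,r})) > 0$. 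Now for every $\omega = (c_n) \in \B{c}{r}^{\NN}$ and every $n \geq m \geq 0$, the point $\sample{\sigma^m \omega}{n-m}(0)$ lies in $L$ (it equals $0$ when $n = m$, and otherwise is $g(0)$ for some $g \in G_{c,r}$), while $J_{\sigma^n \omega} \subset J(G_{c,r})$ by Proposition \ref{prop:J}. Hence $\omega$ has post-critical distance at least $\delta$.

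For the second sub-task, convexity of the disk $\B{c}{r}$ is the key: the linear interpolation $\omega_t := ((1-t)c_n + t c'_n)_{n=1}^{\infty}$ stays in $\B{c}{r}^{\NN}$ for every $t \in [0,1]$, so by the previous step each $\omega_t$ has post-critical distance at least the same $\delta$. Moreover, the path $t \mapsto \omega_t$ is continuous in the $\ell^{\infty}$ metric with Lipschitz constant at most $2r$. Consequently $\omega$ and $\omega'$ lie in a common connected component of the set of sequences with positive post-critical distance, hence in the same hyperbolic component, and Theorem \ref{th:Com} then delivers the desired quasiconformal conjugacy on iterated Julia sets.

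The main obstacle I anticipate is the first sub-task: I need both that the orbit of $0$ under every element of $G_{c,r}$ stays in the attracting minimal set $L$ and that $L$ is compactly contained in $F(G_{c,r})$. The strict inequality $r < \rbif(c)$ is essential here, because at $r = \rbif(c)$ the planar minimal set becomes non-attracting (J-touching or sub-rotative by Theorem \ref{th:class}) and $\delta$ would degenerate to $0$, so the present strategy would fail precisely at the bifurcation radius.
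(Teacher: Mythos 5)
Your proposal is correct and follows essentially the same route as the paper: identify the attracting planar minimal set $L = \overline{\bigcup_{g \in G_{c,r}}\{g(0)\}}$, use $\delta = \dist(L, J(G_{c,r})) > 0$ together with Proposition \ref{prop:J} to get a uniform post-critical distance bound, note that $\B{c}{r}^{\NN}$ is connected in $\ell^{\infty}$, and invoke Comerford's Theorem \ref{th:Com}. If anything, you are slightly more careful than the paper on the $n=m$ case by observing explicitly that $0 = f_{\tilde{c}}^{\circ p}(0) \in L$.
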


\begin{proof}
By Definition \ref{def:bifRad} and Theorem \ref{th:BififfTotDisconn}, 
the polynomial semigroup $G_{c, r}$ has the planar attracting minimal set $L = \overline{\bigcup_{g \in G_{c, r}}\{g(0)\}}.$
Define $\delta = \dist (L, J(G_{c,r})),$ which is strictly positive by Definition \ref{def:attractingMinSet} of attracting minimal sets. 
For every $\omega\in \B{c}{r}^{\NN}$, $m \geq 0$, and $n\geq m$, we have $\sample{\sigma^{m} \omega}{n - m}(0) \in L$ 
since $\sample{\sigma^{m} \omega}{n - m} \in G_{c, r}$. 
Also, $J_{\sigma^{m} \omega} \subset J(G_{c,r})$ by Proposition \ref{prop:J}. 
Therefore, $\omega$ has post-critical distance $\geq \delta$ with uniform $\delta > 0.$ 
Since the $\ell^{\infty}$-ball $\B{c}{r}^{\NN}$ is connected, the ball $\B{c}{r}^{\NN}$ is contained in a hyperbolic component. 
Thus,  every two $\omega, \omega' \in \B{c}{r}^{\NN}$ are quasiconformally conjugate on their iterated Julia sets by Theorem \ref{th:Com}. 
\end{proof}

More precisely, we can show by using \cite[Theorem 1.3]{Com08} that the maximal dilatation $K$ of the quasiconformally conjugacy $\{\varphi_{n}\}_{n \geq 0}$ depends on $c$ and $r$ but not on $\omega$ nor $\omega'.$ 
This is because the $\delta$ in the proof is independent of $\omega$. 
Besides, we can show that the maximal dilatation $K$ 
tends to $1$ as $r$ tends to $0$. 

Lastly, we give the following corollary. 

\begin{cor}\label{cor:qc}
Suppose that the interior of $ \B{c}{r}$ contains a superattracting parameter $\tilde{c}$. 
Denote the autonomous Julia set by $J_{\tilde{c}} = J_{\tilde{\omega}}$ where $\tilde{\omega} = (\tilde{c}, \tilde{c}, \dots ).$ 
If $r < \rbif (c)$, 
then for every  $\omega\in \B{c}{r}^{\NN}$, there exists a map $\varphi$ 
which maps $J_{\tilde{c}}$ onto the non-autonomous Julia set $J_{\omega}$ quasiconformally. 
If $r > \rbif (c),$ then for $\PP_{c, r}$-almost every $\omega$, 
the autonomous Julia set $J_{\tilde{c}}$ is not homeomorphic to the non-autonomous Julia set $J_{\omega}.$ 
\end{cor}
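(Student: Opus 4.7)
The statement has two halves, corresponding to the mean stable and non-mean-stable regimes, and my plan is to dispatch each by directly invoking a result already in place.

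For the first half, the plan is to apply Theorem \ref{th:qc} to the given sample path $\omega$ together with the constant sequence $\tilde{\omega} := (\tilde{c}, \tilde{c}, \dots)$. Since $\tilde{c}$ lies in the interior of $\B{c}{r}$, in particular $\tilde{c} \in \B{c}{r}$, so $\tilde{\omega} \in \B{c}{r}^{\NN}$. Theorem \ref{th:qc} then produces a sequence $\{\varphi_{n}\}_{n \geq 0}$ of $K$-quasiconformal maps conjugating the two non-autonomous systems on their iterated Julia sets; setting $\varphi := \varphi_{0}$ yields a quasiconformal map from $J_{\tilde{\omega}} = J_{\tilde{c}}$ onto $J_{\omega}$, which is exactly the map required. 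The identification $J_{\tilde{\omega}} = J_{\tilde{c}}$ is the observation recorded just after Definition \ref{def:non-autoIter}.

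For the second half, the plan is to combine Theorem \ref{th:BififfTotDisconn} with a simple topological obstruction. The equivalence $(1') \Leftrightarrow (3')$ in Theorem \ref{th:BififfTotDisconn} tells us that if $r > \rbif(c)$, then $J_{\omega}$ is totally disconnected for $\PP_{c,r}$-almost every $\omega$. On the other hand, $\tilde{c}$ is superattracting, so $\tilde{c} \in \mathcal{H} \subset \Mandel$; this implies that the autonomous Julia set $J_{\tilde{c}}$ is connected, and moreover $J_{\tilde{c}}$ is a perfect set containing uncountably many points. A connected topological space with more than one point cannot be homeomorphic to a totally disconnected space, so $J_{\tilde{c}}$ is not homeomorphic to $J_{\omega}$ for $\PP_{c,r}$-almost every $\omega$.

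Since both halves are short reductions to previously established results, there is essentially no hard step. The only items worth double-checking are (i) $\tilde{\omega} \in \B{c}{r}^{\NN}$, which is immediate from $\tilde{c} \in \B{c}{r}$, (ii) the identification of the non-autonomous Julia set of the constant sequence with the usual autonomous Julia set, and (iii) the standard fact that a superattracting parameter gives a connected and uncountable Julia set. All three are routine, so the substance of the corollary is entirely carried by Theorems \ref{th:qc} and \ref{th:BififfTotDisconn}.
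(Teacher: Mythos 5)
Your proposal is correct and follows the paper's own proof essentially verbatim: the first half is Theorem \ref{th:qc} applied to $\tilde{\omega}$ and $\omega$ with $\varphi=\varphi_{0}$, and the second half combines Theorem \ref{th:BififfTotDisconn} with the connectedness of $J_{\tilde{c}}$. Your added remark that $J_{\tilde{c}}$ has more than one point (so connectedness genuinely obstructs a homeomorphism onto a totally disconnected set) is a small but welcome precision over the paper's phrasing.
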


\begin{proof}
First suppose $r < \rbif (c)$.  
By Theorem \ref{th:qc}, there exists a quasiconformal conjugacy $\{\varphi_{n}\}_{n \geq 0}$ between $\tilde{\omega}$ and $\omega$ for every $\omega \in \B{c}{r}^{\NN}.$ 
By definition, the map $\varphi_{0}$ quasiconformally maps $J_{\tilde{c}}$ onto $J_{\omega}.$ 
Next suppose $r > \rbif (c)$.  
Then by Theorem \ref{th:BififfTotDisconn},  the non-autonomous Julia set $J_{\omega}$ is totally disconnected almost surely. 
Since the autonomous Julia set $J_{\tilde{c}}$ of superattracting parameter $\tilde{c}$ is connected, 
these $J_{\tilde{c}}$ and $J_{\omega}$ cannot be homeomorphic each other. 
\end{proof}


\section{Conclusive discussions}\label{sec:Conc}
In this section, we discuss three perspectives, namely, open problems, other problems about geometric measure theory, and application to the general settings. 

\subsection{Open problems}
We give 
the examples of $c$ satisfying equality $\rbif(c) = \dist(c, \partial \mathcal{M})$ and 
the examples of $c$ satisfying strict inequality $\rbif(c) < \dist(c, \partial \mathcal{M})$. 
The author conjectures that the former ones are uncommon. 
More precisely, the author consider the answer of the following question is yes. 

\begin{ques}
If $c$ does not belong to the main cardioid,  then does $\rbif(c) < \dist(c, \partial \mathcal{M})$ hold? 
If $c \notin [0, 1/4],$ then does it hold? 
\end{ques}

We numerically estimated  $0.0386\cdots \leq \rbif(-1) \leq  0.0399 \cdots$. 
However, the actual value remains unknown. 
More generally, for every superattracting parameter $\tilde{c}$, it is interesting to determine the bifurcation radius. 

\begin{ques}
What is the exact value of $\rbif(\tilde{c})$ for a superattracting parameter $\tilde{c}$? 
How does it relate to other dynamical quantities? 
\end{ques}


Besides, it would be meaningful to investigate their asymptotic behavior.  

\begin{ques}
Let $\tilde{c}_{p}$ be a superattracting parameter with period $p$ for every $p \in \NN.$
How does $\rbif(\tilde{c}_{p})$ decrease as $p \to \infty$? 
\end{ques}

Our strategy to estimate $\rbif(-1)$ from above is to find multiple roots of some algebraic equation. 
This raises the following question. 

\begin{ques}
For every superattracting parameter $\tilde{c}$, is the bifurcation radius $\rbif(\tilde{c})$ an algebraic number? 
\end{ques}

At least, if $\tilde{c} = 0,$ then $\rbif(0) = 1/4$ is algebraic and is related to the parabolic parameter $c =1/4$. 

We are interested also in 
parameters which are not superattracting. 
Recall that Theorem \ref{th:BififfTotDisconn} is concerned with the case where $\B{c}{r}$ contains superattracting parameters.  

\begin{ques}
Can we generalize Theorem \ref{th:BififfTotDisconn}  when $\B{c}{r}$ does not contain superattracting parameters? 
\end{ques}

\begin{ques}
How does $\rbif(c)$ decrease as $c \to \partial \Mandel$?
For example, within $\epsilon > 0$, how does $\rbif(-3/4 + \epsilon)$ decrease as $\epsilon \to 0$?
\end{ques}

\subsection{Other problems about geometric measure theory} 
One can consider the Hausdorff dimensions and other geometric properties of random Julia sets. 
For instance, the uniform perfectness and the Johnness was investigated. 
See Theorem 1.12 and Theorem 1.6 of \cite{Sumi06}. 

Regarding dimensions, the following facts are known. 
Suppose $c=0$ and $0 < r < 1/4$ and let  $(\PP_{0, r}, \Omega_{0, r})$ as in  Setting \ref{settingUniform}.
Denote by $\mathrm{HD} (J)$ the Hausdorff dimension of a set $J.$ 
Then, there exists $1 < d_{0, r} <2$ such that $\mathrm{HD}( J_{\omega}) = d_{0, r}$ for $\PP_{0, r}$-almost every $\omega \in \Omega_{0, r}.$ 
See Theorems 8.8 and 8.10 of \cite{MSU} by Mayer, Skorulski, and Urba\'nski. 
Besides, denote by $\mathcal{H}^{d}$ the $d$-dimensional Hausdorff measure and by $\mathcal{P}^{d}$ the $d$-dimensional Packing measure, 
then $\mathcal{H}^{d_{0, r}} ( J_{\omega}) = 0$ and $\mathcal{P}^{d_{0, r}} ( J_{\omega}) = \infty$ for $\PP_{0, r}$-almost every $\omega \in \Omega_{0, r}.$ 
See \cite[Theorem 8.16]{MSU}. 
Note that this is different from the deterministic case, and hence almost every $J_{\omega}$ is not bi-Lipschitz equivalent to any autonomous hyperbolic Julia sets. 

Sumi kindly taught me that for every $c \in \CC$ and $r >0$, 
there exists a constant $d_{c, r}$ such that $\mathrm{HD}( J_{\omega}) = d_{c, r}$ for $\PP_{c, r}$-almost every $\omega \in \Omega_{c, r}$. 
Rugh showed that the dimension $d_{c, r}$ depends real-analytically on 
$\mathrm{Re} (c)$,  $\mathrm{Im} (c)$, and  $r$  within $|c| + r <1/4$. 
See \cite{Rugh} for the exact statements. 
This is the motivation for the following question. 

\begin{ques}
How does the typical dimension $d_{c, r}$ depend on $c$ and $r$? 
\end{ques}

For other geometric properties, 
Br\"uck showed that the random Julia set $J_{\omega}$ is a quasi-circle for every $\omega \in \Omega_{0, r}$ if $r < 1/4.$ 
See \cite[Corollary 4.5]{B01}. 

Theorem \ref{th:qc} generalize this result. 
However, the following questions are open. 

\begin{ques}
Suppose $r > \rbif(c).$ 
Then for $\PP_{c,r} \times \PP_{c,r}$-almost every $(\omega, \omega')$, 
are the Julia sets $J_{\omega}$ and $J_{\omega'}$ quasiconformally equivalent? 
Note that almost every Julia set is homeomorphic to the (usual) Cantor set. 
\end{ques}

\begin{ques}
If $r = \rbif(c),$ 
then is almost every random Julia set $J_{\omega}$ quasiconformally equivalent to the autonomous Julia set? 
\end{ques}

In this paper, we considered the measure-theoretic aspect of the typicality problem.
For the topological aspect,  see \cite{GQL} for example.  

\subsection{Application to  general settings}
We mainly considered random dynamical systems induced by uniform noise on disks as in Setting \ref{settingUniform}. 
However,  random dynamical systems induced by any distribution $\mu$ with compact support can be considered. 
Even for Setting \ref{setting}, we can apply the numerical estimates given in Section \ref{sec:MainRes}. 
More precisely, the following holds. 

\begin{theorem}\label{th:general}
Let $\mu$ be a Borel probability measure on the parameter plane $\CC$ with compact support 
and let $(\PP_{\mu}, \Omega_{\mu})$ be defined as in Setting \ref{setting}. 
If there exists $c \in \CC$ such that $\supp \mu \subset \B{c}{\rbif(c)}$, 
then the polynomial semigroup $G_{\mu}$ has a planar minimal set. 
If there exists $c \in \CC$ such that $\mathrm{int} (\supp \mu) \supset \B{c}{\rbif(c)}$, 
then the random Julia set $J_{\omega}$ is totally disconnected for $\PP_{\mu}$-almost every $\omega \in \Omega_{\mu}$. 
\end{theorem}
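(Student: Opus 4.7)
The plan is to derive both statements by comparing $G_\mu$ with $G_{c,r}$ for a suitable value of $r$, combining the monotonicity in Lemma \ref{lem:MinimalInclusion} with Theorem \ref{th:listOfMinimalSets} which pins down exactly when $G_{c,r}$ does or does not have a planar minimal set.

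For the first assertion, assume $\supp\mu \subset \B{c}{\rbif(c)}$. Then the generating sets satisfy $\Gamma_\mu = \{f_{c'} \colon c' \in \supp\mu\} \subset \{f_{c'} \colon c' \in \B{c}{\rbif(c)}\} = \Gamma_{c,\rbif(c)}$. By Theorem \ref{th:listOfMinimalSets} the larger semigroup $G_{c,\rbif(c)}$ admits a planar minimal set $L$: a non-attracting one when $c \in \mathcal{H}$, while in the degenerate case $\rbif(c) = 0$ the hypothesis forces $\mu = \delta_c$ and any fixed point of $f_c$ provides a planar minimal set. Lemma \ref{lem:MinimalInclusion} then produces a minimal set $L' \subset L$ of $G_\mu$, and since $\infty \notin L$ we also have $\infty \notin L'$, so $L'$ is the desired planar minimal set of $G_\mu$.

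For the second assertion, the hypothesis places the compact set $\B{c}{\rbif(c)}$ inside the open set $\mathrm{int}(\supp\mu)$, so by a standard compactness argument there exists $\delta > 0$ with $\B{c}{\rbif(c)+\delta} \subset \mathrm{int}(\supp\mu) \subset \supp\mu$. Setting $r = \rbif(c)+\delta > \rbif(c)$ yields the reverse inclusion $\Gamma_{c,r} \subset \Gamma_\mu$. By Theorem \ref{th:listOfMinimalSets} the smaller semigroup $G_{c,r}$ has no planar minimal sets, so its only minimal set is $\{\infty\}$. Now Lemma \ref{lem:MinimalInclusion} forces every minimal set of $G_\mu$ to contain a minimal set of $G_{c,r}$, hence to contain the point $\infty$; in particular, $G_\mu$ admits no planar minimal set. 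Since $\mathrm{int}(\supp\mu) \neq \emptyset$ by hypothesis, Corollary \ref{cor:cor} (which in turn rests on Lemma \ref{lem:kernelEmpty}, Theorem \ref{th:Sumi} and Theorem \ref{th:T0=1ImpliesTypFastEsc}) then delivers the conclusion that $J_\omega$ is totally disconnected for $\PP_\mu$-almost every $\omega$.

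The argument is essentially a packaging of the monotonicity of planar-minimal-set existence in the parameter $r$, and the only mild technicality I expect to be the main (but still routine) obstacle is the compactness upgrade in the second part, which promotes the closed-in-open containment $\B{c}{\rbif(c)} \subset \mathrm{int}(\supp\mu)$ into a closed-in-closed containment with a strictly larger radius $r > \rbif(c)$; without this step one cannot invoke Theorem \ref{th:listOfMinimalSets} on the nose.
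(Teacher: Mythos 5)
Your proposal is correct and follows essentially the same route as the paper: comparing $G_\mu$ with $G_{c,r}$ via the monotonicity in Lemma \ref{lem:MinimalInclusion} and Theorem \ref{th:listOfMinimalSets}, then concluding total disconnectedness through $T_{\{\infty\},\mu}\equiv 1$ (your appeal to Corollary \ref{cor:cor} is just the paper's chain of Theorems \ref{th:Sumi} and \ref{th:T0=1ImpliesTypFastEsc} packaged). Your explicit treatment of the degenerate case $\rbif(c)=0$ and of the compactness upgrade to $r=\rbif(c)+\delta$ fills in details the paper leaves implicit under ``similarly.''
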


\begin{proof}
Suppose that there exists $c \in \CC$ such that $\supp \mu \subset \B{c}{\rbif(c)}$. 
By definition, the polynomial semigroup $G_{c, \rbif(c)}$ has a planar minimal set. 
Thus,  Lemma \ref{lem:MinimalInclusion} gives that $G_{\mu}$ also has  a planar minimal set. 

Similarly, if there exists $c \in \CC$ such that $\mathrm{int} (\supp \mu) \supset \B{c}{\rbif(c)}$, 
then $G_{\mu}$ does not have any planar minimal sets. 
Thus, $G_{\mu}$ is mean stable, and $T_{\{\infty\}, \mu}$ is identically equal to $1$ by Theorem \ref{th:Sumi}. 
Combining this with Theorem \ref{th:T0=1ImpliesTypFastEsc}, 
we have that the random Julia set $J_{\omega}$ is totally disconnected for $\PP_{\mu}$-almost every $\omega \in \Omega_{\mu}$. 
\end{proof}

\begin{example}
Let $\mu_{s}$ be the normalized Lebesgue measure on $\CC$ with $\supp \mu_{s} =\{c \in \CC \colon |\mathrm{Re} \ c| \leq s,   |\mathrm{Im}\ c| \leq s\}$ for $s \geq 0$. 
Then there exists $s^{*} > 0$ such that $G_{\mu_{s}}$ is mean stable if and only if $s \notin \{0, s^{*}\}.$ 
Theorem \ref{th:general} and Corollary \ref{cor:rbif0} imply $(4 \sqrt2)^{-1} \leq s^{*} \leq 4^{-1}.$ 
\end{example}

Generally, we may apply our results to non-quadratic cases. 
Since ``the Mandelbrot set is universal'' (\cite{McM00}), 
the structure of the quadratic family exists for any non-trivial family. 
Thus, our quadratic results apply to random dynamical systems of any non-trivial family.  
In this case, it is difficult to obtain quantitative estimates of bifurcation parameters.


\subsection*{Acknowledgments}
The author thanks Hiroki Sumi for detailed comments and valuable discussions. 
The author thanks Isaia Nisoli for instructing the author to use validated numerics. 
The manuscript has been edited for the English language by a professional native proofreader at Xtra, Inc. 
This work was supported by the Research Institute for Mathematical Sciences,
an International Joint Usage/Research Center located in Kyoto University.
This work is supported by  
JSPS Grant-in-Aid for Research Activity Start-up Grant Number JP 21K20323 and 
Grant-in-Aid for Early-Career Scientists Grant Number JP 23K13000. 



\begin{thebibliography}{9}
\bibitem{Arn}Arnold, Ludwig: 
Random dynamical systems. 
Springer Monographs in Mathematics. Springer-Verlag, Berlin, 1998.
%
\bibitem{B01}Br\"uck, Rainer: 
Geometric properties of Julia sets of the composition of polynomials of the form $z^{2} + c_{n}.$ 
Pacific J. Math.\ 198 (2001), no. 2, 347--372.
%
\bibitem{bb03} Br\" uck, Rainer; B\"uger, Matthias:  
Generalized iteration.
Comput.\ Methods Funct.\ Theory 3 (2003), no. 1-2, 201--252.
%
\bibitem{bbr}Br\" uck, Rainer; B\"uger, Matthias; Reitz, Stefan: 
Random iterations of polynomials of the form $z^{2} + c_{n}$ : connectedness of Julia sets. 
Ergodic Theory and Dynamical Systems,  19 (1999), 1221--1231.
%
%
\bibitem{b98}B\"uger, Matthias: 
On the composition of polynomials of the form $z^2+c_n$. 
Math. Ann. 310 (1998), no. 4, 661--683.
%
\bibitem{CG} 
Carleson, Lennart; Gamelin, Theodore: 
Complex Dynamics. 
Springer-Verlag, New York, 1993.
%
\bibitem{Com01}Comerford, Mark David: 
Properties of Julia sets for the arbitrary composition of monic polynomials with uniformly bounded coefficients. 
Thesis (Ph.D.)--Yale University. 2001.
%
\bibitem{Com08}
Comerford, Mark: 
Holomorphic motions of hyperbolic nonautonomous Julia sets. 
Complex Var.\ Elliptic Equ.\ 53 (2008), no.1, 1--22.
%
\bibitem{com11}Comerford, Mark: 
Non-autonomous Julia sets with escaping critical points. 
J. Difference Equ.\ Appl.\ 17 (2011), no.\ 12, 1813--1826. 
%
\bibitem{Com13}Comerford, Mark: 
Non-autonomous Julia sets with measurable invariant sequences of line fields. 
Discrete Contin. Dyn. Syst. 33 (2013), no. 2, 629--642. 
%
\bibitem{CSS20}Comerford, Mark; Stankewitz, Rich; Sumi, Hiroki: 
Hereditarily non uniformly perfect non-autonomous Julia sets. 
Discrete Contin. Dyn. Syst. 40 (2020), no. 1, 33--46.
%
\bibitem{Dur}Durrett, Rick: 
Probability---theory and examples. Fifth edition. 
Cambridge Series in Statistical and Probabilistic Mathematics, 49. Cambridge University Press, Cambridge, 2019.
%
\bibitem{FS91}Forn\ae ss, John; Sibony, Nessim: 
Random iterations of rational functions. 
Ergodic Theory and Dynamical Systems, 11 (1991), 687--708.
%
\bibitem{GF95}Giarrusso, Dante; Fisher, Yuval: 
A parameterization of the period 3 hyperbolic components of the Mandelbrot set. 
Proc. Amer. Math. Soc. 123 (1995), no. 12, 3731--3737.
%
\bibitem{GQL}Gong, Zhimin; Qiu, Weiyuan; Li, Ying: 
Connectedness of Julia sets for a quadratic random dynamical system. 
Ergodic Theory and Dynamical Systems, 23 (2003), 1807--1815.
%
\bibitem{HM}Hinkkanen, Aimo; Martin, Gaven: 
The Dynamics of Semigroups of Rational Functions I. 
Proc.\ London Math.\ Soc.\ (3) 73 (1996), no. 2, 358--384.
%
%

\bibitem{Jon}Jonsson, Mattias: 
Ergodic properties of fibered rational maps. 
Ark.\ Mat.\ 38 (2000), no. 2, 281--317.
%
\bibitem{LZ22}Lech, Krzysztof; Zdunik, Anna: 
Total disconnectedness of Julia sets of random quadratic polynomials. 
Ergodic Theory and Dynamical Systems 42 (2022), no. 5, 1764--1780.
%
\bibitem{MSU} Mayer, Volker; Skorulski, Bartlomiej; Urba\'nski, Mariusz: 
Distance expanding random mappings, thermodynamical formalism, Gibbs measures and fractal geometry. 
Lecture Notes in Mathematics, 2036. Springer, Heidelberg, 2011.
%
\bibitem{McM94}McMullen, Curtis: 
Complex dynamics and renormalization. 
Annals of Mathematics Studies, 135. Princeton University Press, 1994. 
%
\bibitem{McM00}McMullen, Curtis T.: 
The Mandelbrot set is universal. 
The Mandelbrot set, theme and variations, 1--17, London Math. Soc. Lecture Note Ser., 274, Cambridge Univ. Press, Cambridge, 2000.
%
%
%
%
\bibitem{Mil06}Milnor, John:   
 Dynamics in one complex variable, third edition.  
 Annals of Mathematical Studies. 160,  Princeton University Press, 2006.
%
%
%
\bibitem{PP} Petkovi\'c, Miodrag S.; Petkovi\'c, Ljiljana D.:
Complex interval arithmetic and its applications. 
Math. Res., 105
Wiley-VCH Verlag Berlin GmbH, Berlin, 1998. 284 pp.
%
\bibitem{Rugh} Rugh, Hans Henrik:
On the dimensions of conformal repellers. Randomness and parameter dependency. 
Ann. Math. (2) 168 (2008), no. 3, 695--748
%
%
\bibitem{stank12}Stankewitz, Rich:   
Density of repelling fixed points in the Julia set of a rational or entire semigroup, II. 
Discrete Contin.\ Dyn.\ Syst.\ 32 (2012), no. 7, 2583-2589. 
%
\bibitem{Sumi00}Sumi, Hiroki:   
Skew product maps related to finitely generated rational semigroups. 
Nonlinearity 13 (2000), no. 4, 995-1019.

\bibitem{Sumi06}Sumi, Hiroki: 
Semi-hyperbolic fibered rational maps and rational semigroups. 
Ergodic Theory and Dynamical Systems, 26 (2006), no. 3, 893--922.
%
\bibitem{Sumi11}Sumi, Hiroki: 
Random complex dynamics and semigroups of holomorphic maps. 
Proc. Lond. Math. Soc. (3) 102 (2011), no. 1, 50--112.
%
\bibitem{Sumi13}Sumi, Hiroki: 
Cooperation principle, stability and bifurcation in random complex dynamics. 
Adv. Math. 245 (2013), 137--181. 
%
\bibitem{Sumi21}Sumi, Hiroki:  
Negativity of Lyapunov exponents and convergence of generic random polynomial dynamical systems and random relaxed Newton's methods. 
Comm. Math. Phys. 384 (2021), no. 3, 1513--1583. 
\bibitem{SW19}Sumi, Hiroki; Watanabe, Takayuki:  
Non-i.i.d. random holomorphic dynamical systems and the probability of tending to infinity. 
Nonlinearity 32 (2019) No. 10, 3742--3771. 
%
\bibitem{SW22}Sumi, Hiroki; Watanabe, Takayuki:  
Non-i.i.d. random holomorphic dynamical systems and the generic dichotomy.  
Nonlinearity 35 (2022), No. 4, 1857--1875. 
%

%
%
\end{thebibliography}
\end{document}